\theoremstyle{definition}
\newtheorem{thm}{Theorem}
\newtheorem{prop}[thm]{Proposition}
\newtheorem{lem}[thm]{Lemma}
\newtheorem{dfn}[thm]{Definition}
\newtheorem{rem}[thm]{Remark}
\newtheorem{assump}[thm]{Assumption}
\numberwithin{equation}{section}
\numberwithin{thm}{section}
\numberwithin{thm_ja}{section}
\newcommand{\newconvex}{transport convex}
\newcommand{\newconvexity}{transport convexity}
\newcommand{\newsmooth}{transport $L$-smooth}
\newcommand{\newsmoothness}{transport $L$-smoothness}
\title{Accelerated gradient descent method for functionals of probability measures by new convexity and smoothness based on transport maps}
\author{Ken'ichiro Tanaka%
\footnote{
Department of Mathematical Informatics, Graduate School of Information Science and Technology, The University of Tokyo, 7-3-1 Hongo, Bunkyo, Tokyo 113-8656, Japan. 
\texttt{kenichiro@mist.i.u-tokyo.ac.jp}}
\footnote{
PRESTO Japan Science and Technological Agency (JST), Japan.}}
\date{\today}
\begin{document}

\maketitle

\begin{abstract}
We consider problems of minimizing functionals $\mathcal{F}$ of probability measures on the Euclidean space.  
To propose an accelerated gradient descent algorithm for such problems, 
we consider gradient flow of transport maps that give push-forward measures of an initial measure. 
Then we propose a deterministic accelerated algorithm 
by extending Nesterov's acceleration technique with momentum. 
This algorithm do not based on the Wasserstein geometry. 
Furthermore, 
to estimate the convergence rate of the accelerated algorithm, 
we introduce new convexity and smoothness for $\mathcal{F}$ based on transport maps. 
As a result, 
we can show that the accelerated algorithm converges faster than a normal gradient descent algorithm. 
Numerical experiments support this theoretical result. 
 
\end{abstract}

\tableofcontents

\section{Introduction}

In this paper, 
we consider problems of minimizing functionals of probability measures on the $d$-dimensional Euclidean space $\mathbf{R}^{d}$.  
Let 
$\mathcal{P}_{2}(\mathbf{R}^{d})$ 
be the set of probability measures on $\mathbf{R}^{d}$ whose second-order moments are finite
and let 
$\mathcal{F}: \mathcal{P}_{2}(\mathbf{R}^{d}) \to \mathbf{R}$ 
be a functional of a measure in $\mathcal{P}_{2}(\mathbf{R}^{d})$. 
Our objective is to solve a minimization problem of $\mathcal{F}$:
\begin{align}
\text{minimize} \quad \mathcal{F}(\mu) \quad \text{subject to} \quad \mu \in \mathcal{P}_{2}(\mathbf{R}^{d}). 
\label{eq:def_min_problem}
\end{align}
We assume that there exists an optimal measure
$\mu_{\ast} \in \mathcal{P}_{2}(\mathbf{R}^{d})$. 
To give algorithms solving Problem \eqref{eq:def_min_problem}, 
we consider gradient flow for $\mathcal{F}$. 
Our main aim is to propose efficient algorithms for getting empirical measures 
that make $\mathcal{F}$ as small as possible. 

Optimization in measure spaces like~\eqref{eq:def_min_problem} 
is a fundamental problem in statistics and other various areas. 
In statistics, estimate of probability density functions is ubiquitous \cite{silverman1986density}
and has a similar form to~\eqref{eq:def_min_problem}. 
Besides, 
such optimization is considered in 
optimal control \cite{casas2012approximation}, 
problems of equilibrium measures \cite{borodachov2019discrete, saff2013logarithmic}, 
numerical integration in RKHSs \cite{10.5555/3020652.3020694, Oettershagen:2017, 10.1214/18-STS683, kanagawa2016convergence}, 
training of neural networks \cite{chizat2022meanfield, pmlr-v151-nitanda22a, chizat2018global, nitanda2017stochastic}, etc. 
In these areas, 
we often encounter problems of finding an empirical measure that approximates a given measure. 
Typical examples are
sampling problems from a given distribution in statistics 
and
problems of constructing numerical integration formulas in numerical analysis.
In these problems
we require an accurate approximation of a given measure based on some criterion. 
Therefore
these problems can be regarded as restriction of $\mu$ to empirical measures in problem~\eqref{eq:def_min_problem}. 

For such problems, 
many methods based on the Monte Carlo method have been devised so far. 
In recent years, 
especially in the field of machine learning, 
application of mathematical optimization methods to problem~\eqref{eq:def_min_problem}
has been studied. 
In particular, 
gradient descent methods for functionals of probability measures have attracted much attention 
\cite{chizat2018global, arbel2019maximum, daneshmand2023efficient, hertrich2023wasserstein, nitanda2017stochastic, chewi2020svgd}.
These methods are often based on 
the Wasserstein gradient flow
\cite{ambrosio2021lectures, ambrosio2005gradient, jordan1998variational, peyre2019computational}, 
the gradient flow based on the geometrical structure given by the Wasserstein distance known in the field of optimal transport
\cite{villani2009optimal, villani2021topics}. 
Studies have shown the convergence of those gradient methods for functionals $\mathcal{F}$ with displacement convexity 
\cite{mccann1997convexity}. 

For mathematical optimization on Euclidean spaces or manifolds, 
there are known methods to accelerate gradient methods.
For convex optimization problems, 
Nesterov 
\cite{nesterov1983method, nesterov2018lectures}
devises an acceleration method using momentum, whose effectiveness is well known. 
For Nesterov's acceleration method, 
convergence analysis based on a corresponding differential equation
has been performed 
\cite{su2014differential, su2016differential, wilson2021lyapunov}.

In view of this fact, 
we propose an accelerated gradient method for problem~\eqref{eq:def_min_problem} for functionals $\mathcal{F}$ in this paper. 
To this end, 
we extend Nesterov's acceleration technique with momentum to a gradient method for problem~\eqref{eq:def_min_problem}. 
For this extension, 
we introduce new convexity and $L$-smoothness for $\mathcal{F}$ based on transport maps
in order to facilitate introducing momentum.
As a result, 
we get a fast method 
that is deterministic unlike 
the methods with Langevin dynamics 
\cite{wibisono2018sampling, durmus2019analysis, bernton2018langevin, cheng2018underdamped, salim2020primal, hsieh2018mirrored,
dalalyan2017theoretical, vempala2019rapid, zou2019sampling, ma2019there}. 
Below we 
discuss relation between this study and existing ones
and 
describe the contributions of this study. 

\subsection{Related work}

Gradient descent methods of (empirical) measures has been often studied. 
In particular, 
Stein variational gradient descent (SVGD)
\cite{liu2016stein, liu2017stein, korba2020non, lu2019scaling, nusken2023geometry, chewi2020svgd, nusken2023manyparticle}
has recently studied for the Kullback-Leibler divergence. 
In the context of the Wasserstein gradient flow 
\cite{ambrosio2021lectures, ambrosio2005gradient, jordan1998variational, peyre2019computational}, 
algorithms are considered for functionals with the displacement convexity, 
which is the convexity on constant speed geodesics in a space of probability measures
\cite{arbel2019maximum, daneshmand2023efficient}. 
Several studies propose acceleration methods
\cite{ma2019there, carrillo2019convergence, pmlr-v97-taghvaei19a, pmlr-v97-liu19i, wang2022accelerated}.
Besides, 
several studies 
\cite{chizat2022meanfield, pmlr-v151-nitanda22a}
propose gradient descent methods based on the normal convexity%
\footnote{
The normal convexity means that 
\(
\mathcal{F}(\lambda \mu + (1-\lambda) \nu)
\leq
\lambda \mathcal{F}(\mu) + (1-\lambda) \mathcal{F}(\nu)
\)
holds for $\lambda \in [0,1]$ and $\mu, \nu \in \mathcal{P}_{2}(\mathbf{R}^{d})$.
}.
Note that 
the displacement convexity is based on the convex combination of random variables in $\mathbf{R}^{d}$
whereas 
the normal convexity is based on the convex combination of probability measures in $\mathcal{P}_{2}(\mathbf{R}^{d})$. 

In convex optimization, 
the first-order methods have attracted attention because of their tractability for large-scale problems 
in various areas like machine learning. 
Because the usual gradient descent method, a basic first-order method, 
do not realize very fast convergence, 
its acceleration has been studied.
Nesterov's accelerated gradient method \cite{nesterov2018lectures} is a representative 
and its convergence analysis has been performed 
based on a corresponding differential equation \cite{su2014differential, su2016differential, wilson2021lyapunov}. 

\subsection{Contribution}

We extend Nesterov's acceleration technique with momentum to a gradient method for problem~\eqref{eq:def_min_problem}. 
In view of our main aim to find empirical measures, 
we consider gradient flow of transport maps $X(t, \cdot): \mathbf{R}^{d} \to \mathbf{R}^{d}$ 
that give push-forward measures $X(t,\cdot)_{\#} \mu_{0}$ of an initial measure $\mu_{0}$
because of the following two reasons.
First, 
we can get point clouds that provides empirical measure 
by discretization of such transport maps $X(t, \cdot)$.
Second, 
it is easy to 
facilitate introducing momentum 
that is indispensable for extension of Nesterov's method
if we deal with the gradient flow of the maps $X(t,\cdot): \mathbf{R}^{d} \to \mathbf{R}^{d}$. 
As a result, 
we do not need to use the Wasserstein geometry of $\mathcal{P}_{2}(\mathbf{R}^{d})$. 
Our approach is a different one from \cite{ma2019there, carrillo2019convergence, pmlr-v97-taghvaei19a, pmlr-v97-liu19i, wang2022accelerated} for acceleration. 

To guarantee effectiveness of the extension, 
we assume some convexity and $L$-smoothness for $\mathcal{F}$. 
Note that 
for each $t$ the push-forward measure $X(t,\cdot)_{\#} \mu_{0}$ is the probability measure of the random variable $X(t, X_{0})$, 
where $X_{0}$ is the random variable following $\mu_{0}$. 
Therefore 
we need convexity based on the convex combination of random variables. 
However, 
the displacement convexity is not enough because it is the convexity just on the constant speed geodesics. 
Taking this situation into account, 
we introduce new convexity for $\mathcal{F}$ based on transport maps. 
In addition, 
we introduce new $L$-smoothness for $\mathcal{F}$ to analyze discretizations of the gradient flow and its accelerated one. 
As a result, 
we show that the extended Nesterov's acceleration is effective for functionals $\mathcal{F}$ with the new convexity and smoothness. 
For this analysis, 
we rely on the method of the Lyapunov function \cite{wilson2021lyapunov}. 
The range of functionals $\mathcal{F}$ with these new properties
is narrower than that with the displacement convexity. 
However, as shown in this paper, typical functionals are shown to have the new properties.


\begin{rem}
There is another framework for getting empirical measures based on some functionals $\mathcal{F}$. 
In the field of kernel quadrature, 
kernel herding algorithms are studied 
\cite{welling2009herding, chen2010super, bach2012equivalence, tsuji2022pairwise}. 
But those algorithms 
are based on the conditional gradient method and 
take points one by one among prepared candidate points in $\mathbf{R}^{d}$. 
They are not based on gradient flows of transport maps that are continuous counterparts of point clouds. 
\end{rem}

\subsection{Organization of this paper}

In Section~\ref{sec:math_prelim}, 
we describe mathematical preliminaries including typical examples of functionals. 
Furthermore, 
we define gradients of functionals based on transport maps and present their examples. 
In Section~\ref{sec:grad_flow}, 
we describe the gradient flow of transport maps, its discretization, and their accelerated versions.  
In Section~\ref{sec:new_conv_and_L_smooth}, 
we introduce new convexity and $L$-smoothness for $\mathcal{F}$ based on transport maps. 
In Section~\ref{sec:grad_Lyap_2}, 
we prove theorems about convergence rates of the gradient flows described in Section~\ref{sec:grad_flow}. 
In Section~\ref{sec:num_ex}, 
we conduct numerical experiments. 
In Section~\ref{sec:concl}, 
we conclude this paper.

\section{Mathematical preliminaries}
\label{sec:math_prelim}

\subsection{Notation}

In this paper, 
$\| \cdot \|$ and $\langle \cdot, \cdot \rangle$ are the Euclidean norm and inner product in $\mathbf{R}^{d}$, 
respectively. 
The function space
$L^{2}(\mathbf{R}^{d} \to \mathbf{R}^{d})$ 
is the set of square integrable maps from $\mathbf{R}^{d}$ to $\mathbf{R}^{d}$. 

For a measure $\mu \in \mathcal{P}_{2}(\mathbf{R}^{d})$
and a function $f: \mathbf{R}^{d} \to \mathbf{R}$, 
the domain of integration of $f$ with respect to $\mu$ is $\mathbf{R}^{d}$
unless otherwise stated. 
The push-forward of $\mu$ by a map $F: \mathbf{R}^{d} \to \mathbf{R}^{d}$ 
is denoted by $F_{\#}\mu$, i.e., $(F_{\#}\mu)(B) := \mu(F^{-1}(B))$ for any measurable set $B \subset \mathbf{R}^{d}$. 
Then, for a function $f: \mathbf{R}^{d} \to \mathbf{R}$, the following holds:
\(
\int f(x)\, \mathrm{d}(F_{\#}\mu)(x)
= 
\int f(F(y)) \, \mathrm{d} \mu_{0}(y).
\)

When we consider functions with two variables in $\mathbf{R}^{d}$, 
we use $\nabla_{1}$ and $\nabla_{2}$ as the gradient operators with respect to the first and second variables, respectively. 

\subsection{Functional of a measure}
\label{sec:func_meas}

We consider the following functionals as examples of $\mathcal{F}$. 
They are typical examples treated in \cite[Chap.~9]{ambrosio2005gradient} and \cite[Lect.~15]{ambrosio2021lectures}. 

\subsubsection{Internal energy}
\label{sec:func_internal_ene}

Let 
$U: [0, \infty) \to (-\infty, \infty]$
be a convex $\mathrm{C}^{1}$ function with
\begin{align}
U(0) = 0
\quad \text{and} \quad 
\lim_{x \to +0} x \, U'(x) = 0. 
\label{eq:diff_U_assump}
\end{align}
For $\mu \in \mathcal{P}_{2}(\mathbf{R}^{d})$, 
we define $\mathcal{U}(\mu)$ by
\begin{align}
\mathcal{U}(\mu)
:=
\begin{cases}
\displaystyle \int U(\rho(x)) \, \mathrm{d}x & (\mu = \rho \mathcal{L}^{d}), \\
+\infty & (\text{otherwise}), 
\end{cases}
\label{eq:def_U_int_ene}
\end{align}
where $\rho$ is the density function with respect to the Lebesgue measure $\mathcal{L}^{d}$ of an absolutely continuous measure $\mu$. 

\begin{rem}
\label{rem:entropy}
A typical example of the internal energy is the negative entropy, 
which is given by~\eqref{eq:def_U_int_ene} with the function $U(x) = x \log x$.
Here we adopt the standard rule $U(0) = 0$. 
\end{rem}

\subsubsection{Potential}
\label{sec:func_pot}

Let $V: \mathbf{R}^{d} \to [0, \infty)$ be a $\mathrm{C}^{1}$ function. 
For $\mu \in \mathcal{P}_{2}(\mathbf{R}^{d})$, 
we define $\mathcal{V}(\mu)$ by
\begin{align}
\mathcal{V}(\mu) 
:= 
\int V(x) \, \mathrm{d}\mu(x).
\label{eq:def_of_F_V_pot_en}
\end{align}

\begin{rem}
\label{rem:KL_div}
The Kullback-Leibler divergence is expressed by the sum of the internal energy and potential. 
Let $\nu = \sigma \mathcal{L}^{d}$ be the probability measure with a density function $\sigma(x)$. 
We assume that $0 < \sigma(x) \leq C$ for some constant $C \in (0, \infty)$.
Then the Kullback-Leibler divergence from $\nu$ to the probability measure $\mu = \rho \mathcal{L}^{d}$ with a density function $\rho(x)$ is given by
\begin{align}
\int \rho(x) \log \frac{\rho(x)}{\sigma(x)} \, \mathrm{d}x
& = 
\int U(\rho(x)) \, \mathrm{d}x + \int V(x) \rho(x) \, \mathrm{d}x - \log C
\notag
\end{align}
where $U(x) = x \log x$ and $V(x) = - \log (\sigma(x)/C)$. 
\end{rem}

\subsubsection{Interaction energy}
\label{sec:func_inter}

Let $W: \mathbf{R}^{d} \times \mathbf{R}^{d} \to [0, \infty)$ 
be a $\mathrm{C}^{1}$ function with symmetry: $W(x,y) = W(y,x)$ for any $x, y \in \mathbf{R}^{d}$. 
For $\mu \in \mathcal{P}_{2}(\mathbf{R}^{d})$, 
we define $\mathcal{W}$ by
\begin{align}
\mathcal{W}(\mu) 
:= 
\frac{1}{2}
\int \int W(x,y) \, \mathrm{d}\mu(x) \mathrm{d}\mu(y). 
\label{eq:def_of_F_W_interact_en}
\end{align}
This functional is regarded as interaction energy. 

\subsection{Gradient of a functional}

To consider gradient flow for a functional $\mathcal{F}$, 
we consider its gradient based on a flow map. 

\begin{dfn}
\label{dfn:grad_of_functional}
Let 
$\mu_{0} \in \mathcal{P}_{2}(\mathbf{R}^{d})$
be a probability measure  
and let
$X: [0,\infty) \times \mathbf{R}^{d} \to \mathbf{R}^{d}$
be a map that is differentiable with respect to the first variable. 
For $t \in [0, \infty)$, 
let
$\mu_{t}$ be the push-forward of $\mu_{0}$ by $X(t, \cdot)$, i.e., 
\(
\mu_{t} := X(t, \cdot)_{\#} \mu_{0}.
\)
For a functional 
$\mathcal{F}: \mathcal{P}_{2}(\mathbf{R}^{d}) \to \mathbf{R}$, 
if there exists a function
$\mathcal{G}_{\mathcal{F}}[\cdot](\cdot): \mathcal{P}_{2}(\mathbf{R}^{d}) \times \mathbf{R}^{d} \to \mathbf{R}^{d}$
satisfying
\begin{align}
\frac{\mathrm{d}}{\mathrm{d}t} \mathcal{F}(\mu_{t})
=
\int 
\left \langle \mathcal{G}_{\mathcal{F}}[\mu_{t}](X(t,x)), \frac{\partial}{\partial t} X(t,x) \right \rangle
\mathrm{d}\mu_{0}(x), 
\label{eq:char_vec_field_for_F_mu_t}
\end{align}
we call $\mathcal{G}_{\mathcal{F}}[\cdot](\cdot)$ a gradient of $\mathcal{F}$. 
\end{dfn}

When an explicit form of $\mathcal{F}$ is given, 
we can derive its gradient $\mathcal{G}_{\mathcal{F}}$ explicitly. 
Gradients of the functionals $\mathcal{U}$,  $\mathcal{V}$, and $\mathcal{W}$ 
are given by the following propositions, 
whose proofs are written in Section~\ref{sec:proofs_gradient}. 

\begin{prop}[Gradient of $\mathcal{U}$]
\label{thm:grad_U}
Let 
$U: [0, \infty) \to (-\infty, \infty]$
be a convex $\mathrm{C}^{1}$ function satisfying~\eqref{eq:diff_U_assump}. 
Let $\mu \in \mathcal{P}(\mathbf{R}^{d})$ be an absolutely continuous measure with a compact support and a density $\rho$, 
and let 
$F: \mathbf{R}^{d} \to \mathbf{R}^{d}$ be a differentiable map whose Jacobian $\nabla F(x)$ is positive definite for any $x \in \mathbf{R}^{d}$. 
Then, 
the function $\mathcal{G}_{\mathcal{U}}$ characterized by
\begin{align}
\mathcal{G}_{\mathcal{U}}[F_{\#}\mu](F(x)) 
= 
\frac{1}{\rho(x)}
\mathop{\mathrm{div}} 
\left[
\left\{
U'\left( \frac{\rho(x)}{|\nabla F(x)|} \right) \rho(x) - U\left( \frac{\rho(x)}{|\nabla F(x)|} \right) |\nabla F(x)|
\right\}
\nabla F(x)^{-1}
\right]
\label{eq:def_grad_for_U_internal}
\end{align}
is a gradient of the functional $\mathcal{U}$ in~\eqref{eq:def_U_int_ene}. 
Here the operator $\mathrm{div}$ gives the column-wise divergence of a matrix, i.e., 
\(
\mathop{\mathrm{div}} A(x) 
=
(\mathop{\mathrm{div}} a_{1}(x), \ldots, \mathop{\mathrm{div}} a_{d}(x))^{\top}
\)
for a matrix $A(x) = (a_{1}(x), \ldots, a_{d}(x))$.
In addition, 
$|B|$ is the determinant of a square matrix $B$.
\end{prop}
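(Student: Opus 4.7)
The plan is to take an arbitrary differentiable flow $X(t,\cdot)$ whose time-$t$ image we identify with $F$, compute $\frac{\mathrm{d}}{\mathrm{d}t}\mathcal{U}(\mu_t)$ explicitly, and reshape the result into the bilinear form on the right-hand side of \eqref{eq:char_vec_field_for_F_mu_t} so that $\mathcal{G}_{\mathcal{U}}$ can be read off. Since the proposition concerns the value at a single map $F$, it suffices to do the calculation at a fixed time, treating $X(t,\cdot)$ as $F$ and $\partial_t X(t,\cdot)$ as an arbitrary smooth perturbation field.

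The key computational steps are as follows. First, the change-of-variables formula together with the positive definiteness of $\nabla X(t,\cdot)$ gives the density of $\mu_t$ at $X(t,x)$ as $\rho(x)/J(t,x)$, where $J(t,x) := |\nabla_x X(t,x)|$, and hence
\[
\mathcal{U}(\mu_t) \;=\; \int U\!\left(\frac{\rho(x)}{J(t,x)}\right) J(t,x)\,\mathrm{d}x .
\]
Differentiating in $t$, applying the product/chain rule, and using Jacobi's formula $\partial_t J = J\,\mathrm{tr}\bigl((\nabla X)^{-1}\nabla\partial_t X\bigr)$, the two terms combine to
\[
\frac{\mathrm{d}}{\mathrm{d}t}\mathcal{U}(\mu_t) \;=\; \int \bigl[\,U(\rho/J)J - U'(\rho/J)\rho\,\bigr]\,\mathrm{tr}\!\bigl((\nabla X)^{-1}\nabla \partial_t X\bigr)\,\mathrm{d}x .
\]
Next I would integrate by parts in $x$ to move the gradient off $\partial_t X$. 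Using the column-wise convention for $\mathrm{div}$ from the statement, the integrand rewrites as a Euclidean inner product of $\partial_t X$ with $\mathrm{div}\bigl(\{U'(\rho/J)\rho - U(\rho/J)J\}(\nabla X)^{-1}\bigr)$, with the sign flip coming from integration by parts. Writing $\mathrm{d}x = \rho(x)^{-1}\mathrm{d}\mu(x)$ on $\mathrm{supp}\,\rho$ then matches the right-hand side of \eqref{eq:char_vec_field_for_F_mu_t}, and setting $F = X(t,\cdot)$ yields the announced formula~\eqref{eq:def_grad_for_U_internal}.

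The main obstacle is justifying the integration by parts with no boundary contributions. Although $\rho$ has compact support, the bracket $\{U'(\rho/J)\rho - U(\rho/J)J\}$ depends on $x$ through $\rho(x)$, so I must confirm it vanishes continuously on $\partial(\mathrm{supp}\,\rho)$. This is exactly what the hypotheses $U(0)=0$ and $\lim_{s\to+0}s\,U'(s)=0$ in \eqref{eq:diff_U_assump} provide: both terms in the bracket tend to $0$ as $\rho(x) \to 0$, so the matrix field extends by zero outside $\mathrm{supp}\,\rho$ and the divergence theorem produces no boundary term. Positive definiteness of $\nabla F$ additionally guarantees that $J$ stays bounded away from $0$ on the support, ensuring the change of variables and Jacobi's formula are globally applicable, so both steps go through rigorously.
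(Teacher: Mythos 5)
Your proof is correct and follows essentially the same route as the paper: change of variables to express $\mathcal{U}(\mu_t)$ as $\int U(\rho/J)\,J\,\mathrm{d}x$, Jacobi's formula for $\partial_t J$, column-wise integration by parts to peel the gradient off the velocity field, and rewriting $\mathrm{d}x = \rho^{-1}\mathrm{d}\mu$ to read off $\mathcal{G}_{\mathcal{U}}$. You additionally spell out why the assumptions $U(0)=0$ and $\lim_{s\to+0}s\,U'(s)=0$ kill the boundary term, which the paper only asserts in passing; that is a welcome clarification rather than a departure.
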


\begin{prop}[Gradient of $\mathcal{V}$]
\label{thm:grad_V}
Let $V: \mathbf{R}^{d} \to [0, \infty)$ be a $\mathrm{C}^{1}$ function. 
The function $\mathcal{G}_{\mathcal{V}}$ given by
\begin{align}
\mathcal{G}_{\mathcal{V}}[\mu](x) = \nabla V(x)
\label{eq:def_grad_for_F_V_pot_en}
\end{align}
is a gradient of the functional $\mathcal{V}$ in~\eqref{eq:def_of_F_V_pot_en}.
\end{prop}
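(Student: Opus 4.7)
The plan is to reduce to the push-forward integration formula and then just apply the chain rule. Because $\mu_{t} = X(t,\cdot)_{\#}\mu_{0}$, the transformation property of push-forwards stated in the Notation subsection gives
\begin{equation*}
\mathcal{V}(\mu_{t})
=
\int V(x)\, \dd \mu_{t}(x)
=
\int V(X(t,x))\, \dd \mu_{0}(x),
\end{equation*}
so the entire $t$-dependence is concentrated inside the integrand on the right-hand side. This is the key move: it converts a derivative of a functional of a measure into a derivative of an ordinary integral against the \emph{fixed} reference measure $\mu_{0}$.

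Next I would differentiate under the integral sign. Since $V$ is $\mathrm{C}^{1}$ and $X(t,x)$ is differentiable in $t$ by assumption, the chain rule gives
\begin{equation*}
\frac{\partial}{\partial t} V(X(t,x))
=
\left\langle \nabla V(X(t,x)),\, \frac{\partial}{\partial t} X(t,x) \right\rangle,
\end{equation*}
and hence, once differentiation and integration can be interchanged,
\begin{equation*}
\frac{\dd}{\dd t}\mathcal{V}(\mu_{t})
=
\int \left\langle \nabla V(X(t,x)),\, \frac{\partial}{\partial t} X(t,x) \right\rangle \dd \mu_{0}(x).
\end{equation*}
Comparing with the defining identity~\eqref{eq:char_vec_field_for_F_mu_t}, this identifies $\mathcal{G}_{\mathcal{V}}[\mu_{t}](X(t,x)) = \nabla V(X(t,x))$, which is exactly the formula $\mathcal{G}_{\mathcal{V}}[\mu](x) = \nabla V(x)$ in~\eqref{eq:def_grad_for_F_V_pot_en}. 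Note that the expression does not depend on $\mu$, which is consistent with the linearity of $\mathcal{V}$ in $\mu$.

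The only non-trivial step is the interchange of $\dd/\dd t$ and the $\mu_{0}$-integral; this is the part that needs a word of justification. The standard dominated-convergence argument suffices provided a local-in-$t$ integrable majorant for $|\nabla V(X(t,x))|\,|\partial_{t} X(t,x)|$ exists. In the regimes used throughout the paper (for instance when $\nabla V$ has at most linear growth and the velocity $\partial_{t}X$ is controlled, consistent with $\mu_{t}\in\mathcal{P}_{2}(\mathbf{R}^{d})$), such a bound is immediate, so I would state this as a mild background hypothesis and refer to Fubini/dominated convergence rather than redo the estimates. No other obstacles are expected, and in particular, in contrast to Proposition~\ref{thm:grad_U} for $\mathcal{U}$, no integration by parts or Jacobian manipulation is needed here.
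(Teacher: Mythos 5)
Your proposal is correct and follows exactly the paper's argument: rewrite $\mathcal{V}(\mu_t)=\int V(X(t,x))\,\dd\mu_0(x)$ via the push-forward formula, apply the chain rule, and compare with \eqref{eq:char_vec_field_for_F_mu_t}. The paper's proof is just the one displayed identity $\frac{\dd}{\dd t}\mathcal{V}(\mu_t)=\int \langle \nabla V(X(t,x)), \partial_t X(t,x)\rangle\,\dd\mu_0(x)$; your added remarks on dominated convergence are a reasonable (and implicitly assumed) regularity point, not a different route.
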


\begin{prop}[Gradient of $\mathcal{W}$]
\label{thm:grad_W}
Let $W: \mathbf{R}^{d} \times \mathbf{R}^{d} \to [0, \infty)$ 
be a $\mathrm{C}^{1}$ function with $W(x,y) = W(y,x)$ for any $x, y \in \mathbf{R}^{d}$. 
The function $\mathcal{G}_{\mathcal{W}}$ given by
\begin{align}
\mathcal{G}_{\mathcal{W}}[\mu](x) 
= 
\int \nabla_{1} W(x, y) \, \mathrm{d}\mu(y) 
\label{eq:def_grad_for_F_W_inter}
\end{align}
is a gradient of the functional $W$ in~\eqref{eq:def_of_F_W_interact_en}. 
\end{prop}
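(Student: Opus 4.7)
The plan is to reduce everything to differentiating under an integral sign after pulling the $\mathcal{W}(\mu_{t})$ integral back to $\mu_{0}$. Using $\mu_{t}=X(t,\cdot)_{\#}\mu_{0}$ and the push-forward change-of-variables formula twice, I would first rewrite
\begin{align*}
\mathcal{W}(\mu_{t})
=\frac{1}{2}\iint W\bigl(X(t,x),X(t,y)\bigr)\,\mathrm{d}\mu_{0}(x)\,\mathrm{d}\mu_{0}(y).
\end{align*}
Because $W$ is $\mathrm{C}^{1}$ and the time derivative $\partial_{t}X$ is available (with appropriate integrability / dominated-convergence hypotheses), I would differentiate under the double integral to obtain
\begin{align*}
\frac{\mathrm{d}}{\mathrm{d}t}\mathcal{W}(\mu_{t})
=\frac{1}{2}\iint\Bigl[&\bigl\langle\nabla_{1}W(X(t,x),X(t,y)),\partial_{t}X(t,x)\bigr\rangle\\
&+\bigl\langle\nabla_{2}W(X(t,x),X(t,y)),\partial_{t}X(t,y)\bigr\rangle\Bigr]\,\mathrm{d}\mu_{0}(x)\,\mathrm{d}\mu_{0}(y).
\end{align*}

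Next I would exploit the symmetry $W(x,y)=W(y,x)$. Differentiating this identity with respect to its first slot gives $\nabla_{1}W(x,y)=\nabla_{2}W(y,x)$. Applying this to the second term and swapping the dummy variables $x\leftrightarrow y$ shows that the two terms inside the double integral are equal, so
\begin{align*}
\frac{\mathrm{d}}{\mathrm{d}t}\mathcal{W}(\mu_{t})
=\iint\bigl\langle\nabla_{1}W(X(t,x),X(t,y)),\partial_{t}X(t,x)\bigr\rangle\,\mathrm{d}\mu_{0}(x)\,\mathrm{d}\mu_{0}(y).
\end{align*}

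Finally I would apply Fubini to integrate $y$ first, moving $\partial_{t}X(t,x)$ outside the inner integral and pushing the inner integral back forward to $\mu_{t}$:
\begin{align*}
\int\nabla_{1}W(X(t,x),X(t,y))\,\mathrm{d}\mu_{0}(y)
=\int\nabla_{1}W(X(t,x),z)\,\mathrm{d}\mu_{t}(z)
=\mathcal{G}_{\mathcal{W}}[\mu_{t}](X(t,x)).
\end{align*}
Substituting this yields exactly the characterizing identity~\eqref{eq:char_vec_field_for_F_mu_t}, so $\mathcal{G}_{\mathcal{W}}$ is a gradient of $\mathcal{W}$ in the sense of Definition~\ref{dfn:grad_of_functional}.

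The computation itself is routine; the only delicate point is the differentiation under the integral, which requires that $\nabla_{1}W$, $\nabla_{2}W$ and $\partial_{t}X$ admit a joint integrable majorant on a neighbourhood of $t$. Under the $\mathrm{C}^{1}$ assumption on $W$ and standard regularity of $X(t,\cdot)$ (e.g., uniform bounds on a compact time interval, as implicitly used for the other gradient propositions), this is immediate, so the main conceptual step is really the symmetrization via $\nabla_{1}W(x,y)=\nabla_{2}W(y,x)$ that absorbs the factor $1/2$.
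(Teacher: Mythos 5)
Your proof is correct and follows essentially the same route as the paper's: differentiate under the double integral, use the symmetry identity $\nabla_{1}W(x,y)=\nabla_{2}W(y,x)$ together with a swap of dummy variables to absorb the factor $1/2$, and then push the inner integral forward to $\mu_{t}$ to recover $\mathcal{G}_{\mathcal{W}}$. The only difference is that you flag the integrable-majorant hypothesis for differentiating under the integral, which the paper leaves implicit.
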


\section{Gradient flow}
\label{sec:grad_flow}

\subsection{Gradient flow and its discretization}
\label{sec:grad_flow_and_disc}

To solve Problem~\eqref{eq:def_min_problem}, 
we begin with finding a curve of measures $\mu_{t} \in \mathcal{P}_{2}(\mathbf{R}^{2})$ 
such that $\mathcal{F}(\mu_{t})$ is a decreasing function of $t$. 
For this purpose, 
we take a measure $\mu_{0} \in \mathcal{P}_{2}(\mathbf{R}^{2})$
and 
find a flow map $X(t, \cdot)$ such that $\mu_{t} := X(t, \cdot)_{\#} \mu_{0}$ makes $\mathcal{F}(\mu_{t})$ decreasing. 
Such a flow map can be found via equations given by the gradient $\mathcal{G}_{\mathcal{F}}$ satisfying~\eqref{eq:char_vec_field_for_F_mu_t}. 

We consider a differential equation determining a flow map $X: [0,\infty) \times \mathbf{R}^{d} \to \mathbf{R}^{d}$:
\begin{align}
& \frac{\partial}{\partial t} X(t, x) = -\mathcal{G}_{\mathcal{F}}[X(t, \cdot)_{\#} \mu_{0}](X(t, x)), 
\label{eq:ODE_for_X} \\
& X(0, x) = x,
\label{eq:ODE_for_X_init}
\end{align}
where $x \in \mathbf{R}^{d}$.
We assume that this equation has a unique global solution $X(t,x)$ for each $x \in \mathbf{R}^{d}$.
Then,  define $\mu_{t}$ by 
\begin{align}
\mu_{t} := X(t,\cdot)_{\#} \mu_{0}.
\label{eq:meas_curve_of_grad_flow}
\end{align}


The following proposition shows that 
$\mu_{t}$ in \eqref{eq:meas_curve_of_grad_flow} 
gives a curve that decreases $\mathcal{F}(\mu_{t})$. 

\begin{prop}
\label{prop:decrease_of_F_mu_t}
Let $X(t, \cdot)$ be a solution of the equation \eqref{eq:ODE_for_X}--\eqref{eq:ODE_for_X_init}. 
Then, $\mu_{t}$ in \eqref{eq:meas_curve_of_grad_flow} satisfies 
\[
\displaystyle
\frac{\mathrm{d}}{\mathrm{d}t}\mathcal{F}(\mu_{t}) 
= 
- \int_{\mathbf{R}^{d}} \left \| \mathcal{G}_{\mathcal{F}}[\mu_{t}](X(t, x)) \right \|^{2} \, \mathrm{d}\mu_{0}(x).
\]
\end{prop}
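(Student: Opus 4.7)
The plan is to apply Definition~\ref{dfn:grad_of_functional} directly and then substitute the gradient flow ODE~\eqref{eq:ODE_for_X}. There is essentially no obstacle: the statement is the characteristic chain-rule identity that justifies calling $\mathcal{G}_{\mathcal{F}}$ a gradient.

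First, I would observe that the curve $t \mapsto \mu_t$ defined in~\eqref{eq:meas_curve_of_grad_flow} has exactly the form assumed in Definition~\ref{dfn:grad_of_functional}, namely $\mu_t = X(t,\cdot)_{\#}\mu_0$ with $X$ differentiable in $t$. Hence by~\eqref{eq:char_vec_field_for_F_mu_t},
\[
\frac{\mathrm{d}}{\mathrm{d}t}\mathcal{F}(\mu_t)
= \int \left\langle \mathcal{G}_{\mathcal{F}}[\mu_t](X(t,x)),\, \frac{\partial}{\partial t}X(t,x)\right\rangle \mathrm{d}\mu_0(x).
\]

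Second, I would substitute the defining equation~\eqref{eq:ODE_for_X} of the gradient flow, $\partial_t X(t,x) = -\mathcal{G}_{\mathcal{F}}[\mu_t](X(t,x))$, into the integrand. The inner product then collapses to $-\|\mathcal{G}_{\mathcal{F}}[\mu_t](X(t,x))\|^2$, yielding the claimed formula. The initial condition~\eqref{eq:ODE_for_X_init} plays no role here; only the ODE itself is used.

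The only point that could cause concern is measurability and integrability of the integrand so that the exchange of derivative and integral implicit in Definition~\ref{dfn:grad_of_functional} is legitimate. Since the statement assumes the existence of a global solution $X(t,\cdot)$ and that $\mathcal{G}_{\mathcal{F}}$ satisfies~\eqref{eq:char_vec_field_for_F_mu_t}, these hypotheses are already built into the setting, so nothing beyond a direct substitution is required.
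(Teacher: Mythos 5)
Your argument is exactly the paper's proof: invoke Definition~\ref{dfn:grad_of_functional} via~\eqref{eq:char_vec_field_for_F_mu_t} and substitute the gradient flow equation~\eqref{eq:ODE_for_X} into the integrand. The paper compresses this to a single sentence, but the content and route are identical.
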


\begin{proof}
By substituting~\eqref{eq:ODE_for_X} into~\eqref{eq:char_vec_field_for_F_mu_t}, 
we have the conclusion. 
\end{proof}


To give algorithms based on the gradient flow, 
we need to discretize the time $t$ of the equation~\eqref{eq:ODE_for_X}--\eqref{eq:ODE_for_X_init}. 
By applying the explicit Euler method to the equation, 
we get the following discrete gradient flow:
\begin{align}
& X_{n+1}(x) = X_{n}(x) - \gamma \, \mathcal{G}_{\mathcal{F}}[(X_{n})_{\#} \mu_{0}](X_{n}(x)), 
\label{eq:disc_grad_flow_for_X} \\
& X_{0}(x) = x, 
\label{eq:meas_hat_mu_n}
\end{align}
where $\gamma > 0$ corresponds to a step size $\varDelta t$ of time. 
By using a solution $\{ X_{n} \}$ of this system, 
we define $\hat{\mu}_{n}$ by 
\begin{align}
\hat{\mu}_{n} := (X_{n})_{\#} \mu_{0}.
\label{eq:disc_PF_of_mu0_by_X}
\end{align}

\subsection{Accelerated gradient flow and its discretization}
\label{sec:acc_grad_flow_and_disc}

For a given vector field $\mathcal{G}_{\mathcal{F}}$,
a flow map $X$ is given by the equation \eqref{eq:ODE_for_X}--\eqref{eq:ODE_for_X_init}. 
Then, 
a curve $\{ \mu_{t} \}_{t \in [0,\infty)} \subset \mathcal{P}_{2}(\mathbf{R}^{d})$
is given by the flow map via \eqref{eq:meas_curve_of_grad_flow}. 
Taking this procedure into account, 
we expect that we can obtain a curve  
making $\mathcal{F}$ decrease more rapidly 
by ``accelerating'' the equation \eqref{eq:ODE_for_X}--\eqref{eq:ODE_for_X_init} for $X$. 

In the field of optimization of convex functions $f: \mathbf{R}^{d} \to \mathbf{R}^{d}$, 
algorithms based on gradient flow and their acceleration have been developed. 
In particular, 
Nesterov's accelerated gradient (NAG) flow is well-known. 
As shown below, 
we can apply the technique of the NAG flow to the equation \eqref{eq:ODE_for_X}--\eqref{eq:ODE_for_X_init} for $X$. 
Consequently, 
we can show faster convergence of $\mathcal{F}(\mu_{t}^{\mathrm{ac}})$ for $\mu_{t}^{\mathrm{ac}}$ given by an accelerated flow map $X^{\mathrm{ac}}$. 

 
We consider the following system of equations:
\begin{align}
& \frac{\partial}{\partial t} X^{\mathrm{ac}}(t, x) = \frac{r}{t} (Y^{\mathrm{ac}}(t,x) - X^{\mathrm{ac}}(t,x)), 
\label{eq:AccODE_for_XY_1_moment} \\
& \frac{\partial}{\partial t} Y^{\mathrm{ac}}(t, x) = - rt^{r-1} \, \mathcal{G}_{\mathcal{F}}[X^{\mathrm{ac}}(t, \cdot)_{\#} \mu_{0}](X^{\mathrm{ac}}(t, x)),
\label{eq:AccODE_for_XY_2_vecfield} \\
& X^{\mathrm{ac}}(0, x) = Y^{\mathrm{ac}}(0,x ) = x,
\label{eq:AccODE_for_XY_init}
\end{align}
where $r$ is a real number with $r \geq 2$. 
By using a solution $(X^{\mathrm{ac}},Y^{\mathrm{ac}})$ of this system, 
we define $\mu_{t}^{\mathrm{ac}}$ and $\nu_{t}^{\mathrm{ac}}$ by 
\begin{align}
& \mu_{t}^{\mathrm{ac}} := X^{\mathrm{ac}}(t, \cdot)_{\#} \mu_{0}, \quad \text{and}
\label{eq:PF_of_mu0_by_X_Acc} \\
& \nu_{t}^{\mathrm{ac}} := Y^{\mathrm{ac}}(t, \cdot)_{\#} \mu_{0}, 
\label{eq:PF_of_mu0_by_Y_Acc} 
\end{align}
respectively. 


To give algorithms based on the accelerated gradient flow, 
we consider a discrete counterpart of the system in \eqref{eq:AccODE_for_XY_1_moment}--\eqref{eq:AccODE_for_XY_init}. 
More precisely, 
we consider the following discrete gradient flow: 
\begin{align}
& X_{n+1}^{\mathrm{ac}}(x) = Y_{n}^{\mathrm{ac}}(x) + \frac{a_{n+1} - a_{n}}{a_{n+1}} \, (Z_{n}^{\mathrm{ac}}(x) - Y_{n}^{\mathrm{ac}}(x)), 
\label{eq:acc_disc_grad_flow_for_X} \\
& Z_{n+1}^{\mathrm{ac}}(x) = Z_{n}^{\mathrm{ac}}(x) - (a_{n+1} - a_{n}) \, \mathcal{G}_{\mathcal{F}}[(X_{n+1}^{\mathrm{ac}})_{\#} \mu_{0}](X_{n+1}^{\mathrm{ac}}(x)), 
\label{eq:acc_disc_grad_flow_for_Z} \\
& Y_{n+1}^{\mathrm{ac}}(x) = \mathfrak{F}[X_{n+1}^{\mathrm{ac}}, Z_{n}^{\mathrm{ac}}, Y_{n}^{\mathrm{ac}}](x), 
\label{eq:acc_disc_grad_flow_for_Y} \\
& X_{0}^{\mathrm{ac}}(x) = Y_{0}^{\mathrm{ac}}(x) = Z_{0}^{\mathrm{ac}}(x) = x,
\label{eq:disc_AccODE_for_XYZ_init}
\end{align}
where $\mathfrak{F}$ and $\{ a_{n} \} \subset \mathbf{R}$ are a functional and an increasing sequence, respectively. 
By using a solution $\{ (X_{n}^{\mathrm{ac}}, Y_{n}^{\mathrm{ac}}, Z_{n}^{\mathrm{ac}}) \}$ of this system, 
we define $\hat{\mu}_{n}$, $\hat{\nu}_{n}$, and $\hat{\xi}_{n}$ by 
\begin{align}
& \hat{\mu}_{n}^{\mathrm{ac}} := (X_{n}^{\mathrm{ac}})_{\#} \mu_{0}, 
\label{eq:disc_PF_of_mu0_by_X_Acc} \\
& \hat{\nu}_{n}^{\mathrm{ac}} := (Y_{n}^{\mathrm{ac}})_{\#} \mu_{0}, \quad \text{and} 
\label{eq:disc_PF_of_mu0_by_Y_Acc} \\
& \hat{\xi}_{n}^{\mathrm{ac}} := (Z_{n}^{\mathrm{ac}})_{\#} \mu_{0}, 
\label{eq:disc_PF_of_mu0_by_Z_Acc}
\end{align}
respectively. 
In Section \ref{sec:est_acc_grad}, 
we define $\mathfrak{F}$ and $\{ a_{n} \}$ so that $\mathcal{F}(\hat{\nu}_{n}^{\mathrm{ac}})$
converges to the optimal value faster than $\mathcal{F}(\hat{\mu}_{n})$ for $\hat{\mu}_{n}$ in~\eqref{eq:disc_PF_of_mu0_by_X}.

\begin{rem}
The relation between 
the system in \eqref{eq:AccODE_for_XY_1_moment}--\eqref{eq:AccODE_for_XY_init} 
and 
its discrete counterpart in~\eqref{eq:acc_disc_grad_flow_for_X}--\eqref{eq:disc_AccODE_for_XYZ_init}
is not straightforward. 
\end{rem}

\section{New convexity and $L$-smoothess of functionals}
\label{sec:new_conv_and_L_smooth}

The gradient flows and their discrete counterparts in Sections \ref{sec:grad_flow_and_disc} and \ref{sec:acc_grad_flow_and_disc}
do not necessarily guarantee convergence of $\mathcal{F}(\mu_{t})$ and $\mathcal{F}(\hat{\mu}_{n})$ to the optimal value $\mathcal{F}(\mu_{\ast})$. 
To guarantee the convergence, 
we introduce notions of convexity and $L$-smoothness for functionals $\mathcal{F}$. 
The $L$-smoothness is used for showing the convergence of the discrete flows. 

\subsection{Convexity}

\begin{dfn}
\label{dfn:pf_convex}
Let $\mathcal{F}: \mathcal{P}_{2}(\mathbf{R}^{d}) \to \mathbf{R}$ be a functional 
and 
$\mathcal{G}_{\mathcal{F}}$ be a gradient of $\mathcal{F}$ satisfying \eqref{eq:char_vec_field_for_F_mu_t}. 
The functional $\mathcal{F}$ is
\emph{\newconvex} if it satisfies 
\begin{align}
\mathcal{F}(T_{\#} \mu) - \mathcal{F}(S_{\#} \mu)
\geq 
\int  
\left \langle 
\mathcal{G}_{\mathcal{F}}[S_{\#} \mu](S(x)), \, 
T(x) - S(x)
\right \rangle
\, \mathrm{d}\mu(x)
\label{eq:def_pf_convex}
\end{align}
for any $S, T \in L^{2}(\mathbf{R}^{d} \to \mathbf{R}^{d})$ and $\mu \in \mathcal{P}_{2}(\mathbf{R}^{d})$. 
\end{dfn}

Below we show that 
the functionals $\mathcal{U}$, $\mathcal{V}$, and $\mathcal{W}$ defined in Section~\ref{sec:func_meas}
have the {\newconvexity} 
under some conditions. 
We begin with the functional $\mathcal{U}$ in Section~\ref{sec:func_internal_ene}.
In this case, 
we restrict ourselves to maps $S$ and $T$ with positive definite Jacobians. 

\begin{thm}[Convexity of $\mathcal{U}$]
\label{thm:conv_of_U}
Let 
$U: [0, \infty) \to (-\infty, \infty]$
be a convex $\mathrm{C}^{1}$ function satisfying~\eqref{eq:diff_U_assump}. 
Assume that the function $\tilde{U}_{d}: (0,\infty) \to \mathbf{R}$ defined by
\begin{align}
\tilde{U}_{d}(z)
:= 
U(z^{-d}) \, z^{d}
\label{eq:def_tilde_U}
\end{align}
is convex and non-increasing. 
In addition, 
assume that 
differentiable maps $S, T \in L^{2}(\mathbf{R}^{d} \to \mathbf{R}^{d})$ 
have positive definite Jacobians $\nabla S(x)$ and $\nabla T(x)$ for any $x$.
Then,
the gradient $\mathcal{G}_{\mathcal{U}}$ satisfies~\eqref{eq:def_pf_convex} for $\mathcal{F} = \mathcal{U}$. 
\end{thm}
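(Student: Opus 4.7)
The plan is to establish the inequality by constructing a straight-line interpolation between $S$ and $T$ and showing that $\mathcal{U}$ composed with the push-forward is convex along this interpolation. Concretely, for $\lambda \in [0,1]$ define
\[
R_{\lambda}(x) := (1-\lambda)\, S(x) + \lambda \, T(x), \qquad h(\lambda) := \mathcal{U}\bigl( (R_{\lambda})_{\#} \mu \bigr).
\]
Since the cone of positive definite matrices is convex, $\nabla R_{\lambda}(x) = (1-\lambda)\nabla S(x) + \lambda \nabla T(x)$ is positive definite for every $x$ and $\lambda$, so the change-of-variables formula applies on the support of $\mu$. The target inequality \eqref{eq:def_pf_convex} then reduces to $h(1) - h(0) \ge h'(0)$, which is immediate from the convexity of $h$.

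First I would rewrite $h(\lambda)$ in a form that exposes $\tilde U_{d}$. Using the push-forward identity together with the substitution $y = R_{\lambda}(x)$, a standard manipulation gives
\[
h(\lambda) = \int U\!\left( \frac{\rho(x)}{|\nabla R_{\lambda}(x)|} \right) |\nabla R_{\lambda}(x)| \, \dd x
 = \int \tilde U_{d}\!\left( \left( \frac{|\nabla R_{\lambda}(x)|}{\rho(x)} \right)^{1/d} \right) \rho(x) \, \dd x,
\]
using exactly the definition~\eqref{eq:def_tilde_U} of $\tilde U_{d}$.

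Next I would show that for each fixed $x$ the integrand is convex in $\lambda$. The key tool is Minkowski's determinant inequality, which says that on the cone of positive semidefinite matrices $A \mapsto |A|^{1/d}$ is concave; therefore
\[
\lambda \;\longmapsto\; \left( \frac{|\nabla R_{\lambda}(x)|}{\rho(x)} \right)^{1/d}
\]
is concave. Composition of a non-increasing convex function ($\tilde U_{d}$, by hypothesis) with a concave function produces a convex function, so the integrand is convex in $\lambda$ for every $x$, and hence $h$ itself is convex on $[0,1]$.

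Finally I would identify $h'(0)$ with the right-hand side of \eqref{eq:def_pf_convex}. Since $\partial_{\lambda} R_{\lambda}(x) = T(x) - S(x)$ and $R_{0} = S$, Definition~\ref{dfn:grad_of_functional} applied to the family $(R_{\lambda})_{\#}\mu$ gives
\[
h'(0) = \int \bigl\langle \mathcal{G}_{\mathcal{U}}[S_{\#}\mu](S(x)),\; T(x) - S(x) \bigr\rangle \, \dd \mu(x),
\]
and convexity of $h$ yields $h(1) - h(0) \ge h'(0)$, which is \eqref{eq:def_pf_convex} for $\mathcal{F} = \mathcal{U}$. The main obstacle I anticipate is technical rather than structural: justifying the change of variables uniformly in $\lambda$ (this is where compact support of $\mu$ and the positive definiteness of the Jacobians of the interpolants are used) and differentiating under the integral sign at $\lambda = 0$ so that the identification of $h'(0)$ with the gradient expression~\eqref{eq:def_grad_for_U_internal} is rigorous. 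The algebraic heart of the argument, however, is the Minkowski determinant inequality combined with the monotonicity-and-convexity hypothesis on $\tilde U_{d}$.
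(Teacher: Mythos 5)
Your proof is correct and follows essentially the same route as the paper: the same straight-line interpolation $R_\lambda$, the same rewriting of the integrand in terms of $\tilde U_d$ composed with $|\nabla R_\lambda(x)|^{1/d}$, the same use of Minkowski's determinant inequality to get concavity of the inner map, and the same composition rule (non-increasing convex $\circ$ concave) to conclude pointwise convexity in $\lambda$. The only cosmetic difference is that the paper names the integrand $r(t,x)$ and explicitly computes $r_t(0,x)$ via the trace identity before integrating by parts, whereas you invoke Definition~\ref{dfn:grad_of_functional} directly for the identification of $h'(0)$; these are the same calculation.
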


Next, we show the {\newconvexity} of the functional $\mathcal{V}$ in Section~\ref{sec:func_pot}.

\begin{thm}[Convexity of $\mathcal{V}$]
\label{thm:conv_of_V}
Assume that a $\mathrm{C}^{1}$ function $V: \mathbf{R}^{d} \to \mathbf{R}$ is convex. 
Then, the functional $\mathcal{V}$ in~\eqref{eq:def_of_F_V_pot_en} is {\newconvex}, 
where the gradient $\mathcal{G}_{\mathcal{V}}$ is defined by~\eqref{eq:def_grad_for_F_V_pot_en}. 
\end{thm}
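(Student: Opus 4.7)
The plan is to reduce the inequality in Definition~\ref{dfn:pf_convex} to the classical first-order convexity inequality for $V$ on $\mathbf{R}^{d}$, applied pointwise and then integrated against $\mu$.

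First I would rewrite both sides of \eqref{eq:def_pf_convex} for $\mathcal{F}=\mathcal{V}$ in terms of integrals over $\mu$ alone. By the push-forward formula stated in Section~\ref{sec:math_prelim}, the left-hand side becomes
\begin{align*}
\mathcal{V}(T_{\#}\mu) - \mathcal{V}(S_{\#}\mu)
= \int V(T(x)) \, \mathrm{d}\mu(x) - \int V(S(x)) \, \mathrm{d}\mu(x)
= \int \bigl[ V(T(x)) - V(S(x)) \bigr] \, \mathrm{d}\mu(x).
\end{align*}
On the right-hand side, using Proposition~\ref{thm:grad_V} to substitute $\mathcal{G}_{\mathcal{V}}[S_{\#}\mu](S(x)) = \nabla V(S(x))$, one gets
\begin{align*}
\int \bigl\langle \nabla V(S(x)), \, T(x) - S(x) \bigr\rangle \, \mathrm{d}\mu(x).
\end{align*}

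Next I would invoke the standard first-order characterization of convexity of $V$: since $V\in \mathrm{C}^{1}(\mathbf{R}^{d})$ is convex, for every pair of points $a,b\in\mathbf{R}^{d}$,
\begin{align*}
V(b) - V(a) \geq \langle \nabla V(a), b - a \rangle.
\end{align*}
Applying this pointwise with $a = S(x)$ and $b = T(x)$ yields
\begin{align*}
V(T(x)) - V(S(x)) \geq \langle \nabla V(S(x)), T(x) - S(x) \rangle
\end{align*}
for $\mu$-a.e.\ $x$. Integrating against $\mu$ and combining with the two rewrites above gives exactly the inequality in Definition~\ref{dfn:pf_convex}, completing the proof.

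There is no real obstacle here: the only mild care is to ensure the integrals in question are well defined, which follows because $S,T\in L^{2}(\mathbf{R}^{d}\to\mathbf{R}^{d})$ together with the $\mathrm{C}^{1}$ assumption on $V$ make both $V\circ S$, $V\circ T$, and $\langle \nabla V\circ S, T-S\rangle$ locally integrable against $\mu\in\mathcal{P}_{2}(\mathbf{R}^{d})$ (and if any of them is $+\infty$, the inequality is automatic). Unlike the analogous result for $\mathcal{U}$, no Jacobian hypothesis on $S,T$ is needed here, because $\mathcal{G}_{\mathcal{V}}$ does not depend on derivatives of the transport maps.
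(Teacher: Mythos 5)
Your proof is correct and follows essentially the same route as the paper's: rewrite both sides of the transport-convexity inequality as integrals over $\mu$ via the push-forward formula, apply the pointwise first-order convexity inequality for $V$ with $a = S(x)$, $b = T(x)$, and integrate. The paper's proof is a terse version of exactly this; your added remarks on integrability and on the absence of Jacobian hypotheses are sound but not strictly needed.
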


Finally, 
we consider the functional $\mathcal{W}$ given in Section~\ref{sec:func_inter}
in the special case that the function $W(x,y)$ is given by $W(x,y) = \tilde{W}(x - y)$ 
with a function $\tilde{W}: \mathbf{R}^{d} \to \mathbf{R}$. 
Note that this $W$ is symmetric 
if $\tilde{W}(-x) = \tilde{W}(x)$ holds for any $x \in \mathbf{R}^{d}$. 

\begin{thm}[Convexity of $\mathcal{W}$]
\label{thm:conv_of_W}
\label{thm:func_inter_pf_convex}
Let $\tilde{W}: \mathbf{R}^{d} \to \mathbf{R}$ be a $\mathrm{C}^{1}$ convex function satisfying $\tilde{W}(-x) = \tilde{W}(x)$. 
Then, the functional $\mathcal{W}$ in~\eqref{eq:def_of_F_W_interact_en} with $W(x,y) = \tilde{W}(x - y)$ is {\newconvex}, 
where the gradient $\mathcal{G}_{\mathcal{W}}$ is defined by~\eqref{eq:def_grad_for_F_W_inter}. 
\end{thm}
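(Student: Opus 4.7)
The plan is to prove this by reducing the double-integral defining $\mathcal{W}(T_\#\mu) - \mathcal{W}(S_\#\mu)$ to an integrand that can be bounded pointwise via convexity of $\tilde{W}$, then symmetrizing in $(x,y)$ to recover the stated gradient term.

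First I would push the maps inside: since $W(x,y) = \tilde{W}(x-y)$, the definition in~\eqref{eq:def_of_F_W_interact_en} yields
\begin{align*}
\mathcal{W}(T_\# \mu) - \mathcal{W}(S_\# \mu)
= \frac{1}{2} \iint \bigl[ \tilde{W}(T(x) - T(y)) - \tilde{W}(S(x) - S(y)) \bigr] \, \mathrm{d}\mu(x)\mathrm{d}\mu(y).
\end{align*}
Applying the pointwise convexity inequality for $\tilde{W}$ to $u = S(x)-S(y)$ and $v = T(x)-T(y)$ gives
\begin{align*}
\tilde{W}(T(x) - T(y)) - \tilde{W}(S(x) - S(y))
\geq \bigl\langle \nabla \tilde{W}(S(x) - S(y)),\, (T(x) - S(x)) - (T(y) - S(y)) \bigr\rangle.
\end{align*}

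Next I would split the resulting double integral into two pieces and symmetrize the one involving $T(y) - S(y)$ by swapping $x \leftrightarrow y$. The symmetry assumption $\tilde{W}(-x) = \tilde{W}(x)$ implies $\nabla \tilde{W}(-u) = -\nabla\tilde{W}(u)$, so after relabeling,
\begin{align*}
-\tfrac{1}{2} \iint \langle \nabla \tilde{W}(S(x) - S(y)), T(y) - S(y) \rangle \, \mathrm{d}\mu(x)\mathrm{d}\mu(y)
= \tfrac{1}{2} \iint \langle \nabla \tilde{W}(S(x) - S(y)), T(x) - S(x) \rangle \, \mathrm{d}\mu(x)\mathrm{d}\mu(y),
\end{align*}
so the two halves combine into a single integral with a factor of $1$ rather than $\tfrac{1}{2}$.

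Finally I would identify the inner integral in $y$ as the gradient: by Proposition~\ref{thm:grad_W} and the push-forward formula,
\begin{align*}
\int \nabla \tilde{W}(S(x) - S(y)) \, \mathrm{d}\mu(y)
= \int \nabla_{1} W(S(x), z) \, \mathrm{d}(S_\# \mu)(z)
= \mathcal{G}_{\mathcal{W}}[S_\# \mu](S(x)),
\end{align*}
which after substitution gives exactly the right-hand side of~\eqref{eq:def_pf_convex}. The only subtle step is the symmetrization, where one must carefully invoke $\nabla\tilde{W}(-u) = -\nabla\tilde{W}(u)$ and a change of variables; everything else is direct from pointwise convexity and Fubini. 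No regularity beyond $\tilde{W} \in \mathrm{C}^1$ is needed, matching the hypothesis.
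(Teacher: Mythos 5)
Your proof is correct and follows essentially the same route as the paper: apply the pointwise convexity inequality for $\tilde{W}$ to $S(x)-S(y)$ and $T(x)-T(y)$, use the antisymmetry $\nabla\tilde{W}(-u) = -\nabla\tilde{W}(u)$ together with the $x\leftrightarrow y$ swap to fold the two terms into one, then integrate and identify $\int \nabla\tilde{W}(S(x)-S(y))\,\mathrm{d}\mu(y)$ with $\mathcal{G}_{\mathcal{W}}[S_\#\mu](S(x))$. The paper performs the symmetrization on the integrand before integrating rather than after, but this is a cosmetic reordering of the same steps.
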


Proofs of these theorems are written in Section~\ref{sec:proofs_convex_smooth}.
Basically they are based on those in \cite[Lect.~15]{ambrosio2021lectures}. 
However we precisely describe them 
because our new convexity is different from that in \cite[Lect.~15]{ambrosio2021lectures} (the displacement convexity). 

\subsection{$L$-smoothess}

\begin{dfn}
\label{dfn:pf_L_smooth}
Let $\mathcal{F}: \mathcal{P}_{2}(\mathbf{R}^{d}) \to \mathbf{R}$ be a functional 
and 
$\mathcal{G}_{\mathcal{F}}$ be a gradient of $\mathcal{F}$ satisfying \eqref{eq:char_vec_field_for_F_mu_t}. 
Let $L$ be a positive real number. 
The functional $\mathcal{F}$ is \emph{\newsmooth} if it satisfies 
\begin{align}
& \mathcal{F}(T_{\#} \mu) - \mathcal{F}(S_{\#} \mu) 
\notag \\
& \leq 
\int  
\left \langle
\mathcal{G}_{\mathcal{F}}[S_{\#}\mu](S(x)), \, T(x) - S(x)
\right \rangle
\mathrm{d} \mu(x)
+ 
\frac{L}{2} \int 
\| T(x) - S(x) \|^{2}
\, \mathrm{d} \mu(x)
\label{eq:def_pf_L_smooth}
\end{align}
for any $S, T \in L^{2}(\mathbf{R}^{d} \to \mathbf{R}^{d})$ and $\mu \in \mathcal{P}_{2}(\mathbf{R}^{d})$. 
\end{dfn}

We have not proved the {\newsmoothness} of 
the functional $\mathcal{U}$ in Section~\ref{sec:func_internal_ene}. 
We leave this proof for future work and just show a partial assertion in Theorem~\ref{thm:func_U_restricted_L_smooth} below. 
In this theorem we prove inequality~\eqref{eq:def_pf_L_smooth} for $\mathcal{U}$ 
under several restrictive assumptions for $\mu$, $U$, $S$, and $T$, 
which are presented in Section~\ref{sec:proofs_smooth} later.
In addition, 
we present some observations about these assumptions in Remark~\ref{rem:L_smooth_U_assump} in Section~\ref{sec:proofs_smooth}. 

\begin{thm}
\label{thm:func_U_restricted_L_smooth}
Let 
$U: [0, \infty) \to (-\infty, \infty]$
be a convex $\mathrm{C}^{2}$ function satisfying~\eqref{eq:diff_U_assump} and
\begin{align}
\lim_{x \to +0} x^{2} \, U''(x) = 0.
\label{eq:diff_U_assump_2nd}
\end{align}
Let $\rho$ be the density of $\mu \in \mathcal{P}_{2}(\mathbf{R}^{d})$. 
Under Assumptions~\ref{assump:bounded_C1_C2}--\ref{assump:bounded_C4_div} in Section~\ref{sec:proofs_smooth}
for $\rho$, $U$, $S$, and $T$, 
inequality~\eqref{eq:def_pf_L_smooth} for $\mathcal{F} = \mathcal{U}$ holds for some constant $L \geq 0$. 
\end{thm}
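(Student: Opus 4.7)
The plan is to apply Taylor's theorem with integral remainder along the linear interpolation $F_\tau := (1-\tau) S + \tau T$ for $\tau \in [0,1]$ and bound the second-order term directly. Under Assumptions 4.9--4.12 (which I read as providing, at minimum, compact support of $\mu$, positive-definiteness of $\nabla F_\tau$ uniformly in $\tau$, and uniform bounds on $\rho, \nabla S, \nabla T$ and their divergences), the maps $F_\tau$ remain admissible diffeomorphisms throughout the interpolation, so the gradient formula of Proposition~\ref{thm:grad_U} applies for every $\tau$. Set $g(\tau) := \mathcal{U}((F_\tau)_{\#} \mu)$ and $v := T - S$; by change of variables, $g(\tau) = \int U(\rho(x)/J_\tau(x)) \, J_\tau(x) \, \dd x$ where $J_\tau(x) := |\nabla F_\tau(x)|$.

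First I would verify, using Definition~\ref{dfn:grad_of_functional} applied to the flow $t \mapsto S + t v$ together with~\eqref{eq:def_grad_for_U_internal}, that $g'(0) = \int \langle \mathcal{G}_{\mathcal{U}}[S_{\#}\mu](S(x)), v(x) \rangle \, \dd\mu(x)$. Then Taylor's theorem yields
\[
g(1) - g(0) - g'(0) = \int_{0}^{1} (1-\tau) g''(\tau) \, \dd \tau,
\]
so it is enough to prove $g''(\tau) \leq L \int \|v\|^{2} \, \dd \mu$ uniformly in $\tau$.

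To compute $g''(\tau)$, I would differentiate twice under the integral using the identities $\partial_\tau J_\tau = J_\tau \, \mathrm{tr}(M_\tau \nabla v)$ and $\partial_\tau M_\tau = - M_\tau \nabla v M_\tau$ with $M_\tau := (\nabla F_\tau)^{-1}$. This produces an expression of the form
\[
g''(\tau) = \int \Bigl[ a_\tau(x) \, (\mathrm{tr}(M_\tau \nabla v))^{2} + b_\tau(x) \, \mathrm{tr}(M_\tau \nabla v \, M_\tau \nabla v) \Bigr] \dd x,
\]
where $a_\tau, b_\tau$ involve $U, U', U''$ evaluated at $\rho/J_\tau$. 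Assumptions~\eqref{eq:diff_U_assump} and~\eqref{eq:diff_U_assump_2nd} together with the presumed pointwise bounds on $\rho$, $J_\tau$, $\nabla S$, $\nabla T$ ensure that $a_\tau$ and $b_\tau$ are uniformly bounded on the support of $\mu$.

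The hard part is that the natural estimate of $g''(\tau)$ is a quadratic form in $\nabla v$, while the target is a bound in $\int \|v\|^{2} \, \dd \mu$. To bridge this gap I would integrate by parts once (and possibly twice) on each term: writing $\mathrm{tr}(M_\tau \nabla v) = \mathrm{div}(M_\tau^\top v) - (\mathrm{div}\, M_\tau^\top) \cdot v$ and using the compact support of $\mu$ to discard boundary terms transfers the derivative off $v$ onto the coefficient matrices. Assumption~\ref{assump:bounded_C4_div}, which I anticipate controls the divergences of the relevant matrix-valued coefficients, then yields a bound of the desired form $g''(\tau) \leq L \int \|v\|^{2} \, \dd \mu$. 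This integration-by-parts reduction is the technical crux; the rest is bookkeeping.
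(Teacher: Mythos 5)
Your high-level plan (Taylor expand along the linear interpolation, reduce to bounding the second derivative, and convert a $\nabla v$-quadratic form into a $\|v\|^2$ bound by integrating by parts) matches the shape of the paper's argument, and your identification of the quadratic-in-$\nabla v$ vs.~$\int\|v\|^2\,\dd\mu$ mismatch as the crux is exactly right. The paper uses the Lagrange form of Taylor's theorem applied pointwise to $r(t,x)=U\bigl(\rho(x)/|\nabla_x R(t,x)|\bigr)|\nabla_x R(t,x)|$, so it must bound $\sup_{t\in[0,1]}\bigl|\int r_{tt}(t,x)\,\dd x\bigr|$ rather than $\int_0^1(1-\tau)g''(\tau)\,\dd\tau$; that is a cosmetic difference.

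The real gap is in how you propose to close the quadratic $\to$ linear reduction. You say you would write $\mathrm{tr}(M_\tau\nabla v)=\mathrm{div}(M_\tau^\top v)-(\mathrm{div}\,M_\tau^\top)\cdot v$ and integrate by parts. But the troublesome term in $g''(\tau)$ is $\int a_\tau\bigl(\mathrm{tr}(M_\tau\nabla v)\bigr)^2\,\dd x$, a \emph{product of two} first-order factors; integrating by parts in that expression merely shifts derivatives between the two factors (or onto the coefficient $a_\tau M_\tau$), producing terms with $\nabla^2 v$ or with $\nabla v$ still present, never terms purely in $v$. The assumptions you misread as ``uniform bounds on $\rho,\nabla S,\nabla T$ and their divergences'' do not fill this hole. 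The missing idea is Assumption~\ref{assump:bounded_C3_L2_L1}, which is a quantitative \emph{converse} Cauchy--Schwarz hypothesis: it bounds the $L^2$-type quantity $\int\rho\,\mathrm{tr}(A)^2\,\dd x$ by a constant times the \emph{square of the linear integral} $\bigl(\int\sqrt{\rho}\,\mathrm{tr}(A)\,\dd x\bigr)^2$. Once the quadratic has been ``linearized'' this way, a \emph{single} integration by parts on the now-linear-in-$\nabla v$ integrand, followed by ordinary Cauchy--Schwarz and Assumption~\ref{assump:bounded_C4_div}, produces the bound $L\int\|v\|^2\rho\,\dd x$. Without invoking the structural role of Assumption~\ref{assump:bounded_C3_L2_L1}, your integration-by-parts plan does not reach the target, so the proposal as written has a genuine gap at the very step you flagged as the crux. (You also did not use Assumption~\ref{assump:matrix_A_PD}, which the paper needs for the inequality $\mathrm{tr}(A^2)\le\mathrm{tr}(A)^2$, nor Assumption~\ref{assump:bounded_C1_C2}, which supplies the uniform bound on the $U',U''$-dependent coefficients; both are necessary ingredients, not optional regularity.)
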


Below we show that 
the functionals $\mathcal{V}$ and $\mathcal{W}$ defined in Section~\ref{sec:func_meas}
have the {\newsmoothness} 
under some conditions. 
First, 
we show the {\newsmoothness} of the functional $\mathcal{V}$ in \eqref{eq:def_of_F_V_pot_en}
under an assumption about a function $V$. 

\begin{thm}[$L$-smoothness of $\mathcal{V}$]
\label{thm:func_pot_L_smooth}
Assume that a function $V: \mathbf{R}^{d} \to \mathbf{R}$ is $L_{V}$-smooth, i.e., 
\begin{align}
V(y) - V(x) 
\leq 
\left \langle \nabla V(x), y - x \right \rangle
+ \frac{L_{V}}{2} \| y - x \|^{2}. 
\label{eq:V_L_smooth}
\end{align}
Then 
the functional $\mathcal{V}$ in~\eqref{eq:def_of_F_V_pot_en} for $V$
is {\newsmooth} with $L = L_{V}$. 
\end{thm}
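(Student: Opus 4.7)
The plan is to reduce the transport $L$-smoothness inequality for $\mathcal{V}$ to the pointwise $L_V$-smoothness inequality for $V$ by a one-step integration against $\mu$.

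First, I would rewrite both sides of the inequality \eqref{eq:def_pf_L_smooth} in a form adapted to $\mathcal{V}$. Using the push-forward identity, I have
\[
\mathcal{V}(T_{\#}\mu) - \mathcal{V}(S_{\#}\mu)
= \int V(T(x))\,\mathrm{d}\mu(x) - \int V(S(x))\,\mathrm{d}\mu(x),
\]
and by Proposition \ref{thm:grad_V} the gradient term in \eqref{eq:def_pf_L_smooth} becomes
\[
\int \langle \nabla V(S(x)),\, T(x) - S(x)\rangle\,\mathrm{d}\mu(x).
\]
So the target inequality collapses to showing
\[
\int \bigl[V(T(x)) - V(S(x))\bigr]\,\mathrm{d}\mu(x)
\leq \int \langle \nabla V(S(x)), T(x)-S(x)\rangle \,\mathrm{d}\mu(x)
+ \frac{L_V}{2}\int \|T(x)-S(x)\|^{2}\,\mathrm{d}\mu(x).
\]

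Next, I would apply the hypothesis \eqref{eq:V_L_smooth} pointwise with $x$ replaced by $S(x)$ and $y$ replaced by $T(x)$ to obtain, for each $x \in \mathbf{R}^d$,
\[
V(T(x)) - V(S(x))
\leq \langle \nabla V(S(x)), T(x)-S(x)\rangle + \frac{L_V}{2}\|T(x)-S(x)\|^{2}.
\]
Integrating this inequality against $\mu$ (which is justified because $S,T \in L^{2}(\mathbf{R}^d\to \mathbf{R}^d)$ and $V$ is $\mathrm{C}^1$ and $L_V$-smooth, so each term is integrable) yields exactly the inequality obtained in the previous step, which is \eqref{eq:def_pf_L_smooth} for $\mathcal{F}=\mathcal{V}$ with constant $L = L_V$.

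Honestly, there is no real obstacle here: the argument is a direct lift of the scalar $L_V$-smoothness inequality under the expectation with respect to $\mu$. The only thing worth stating carefully, if one wants to be pedantic, is integrability of the three integrands, which follows from $S,T \in L^2(\mathbf{R}^d\to\mathbf{R}^d)$ together with the linear growth of $\nabla V$ implied by $L_V$-smoothness; no uniform bound on $V$ itself is needed because only the difference $V(T(x)) - V(S(x))$ appears and can be bounded above by the right-hand side of the pointwise inequality.
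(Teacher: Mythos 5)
Your proof is correct and follows essentially the same route as the paper: substitute $S(x)$ and $T(x)$ for $x$ and $y$ in the pointwise $L_V$-smoothness inequality, then integrate against $\mu$. The extra remark on integrability is a reasonable addition for rigor, but the core argument matches the paper's exactly.
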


Next, 
to deal with the functional $\mathcal{W}$ in~\eqref{eq:def_of_F_W_interact_en}, 
we define the matrix $H_{W}(x,y)$ by
\begin{align}
H_{W}(x,y)
:=
\begin{bmatrix}
\nabla_{1}^{2} W(x,y) & \nabla_{1} \nabla_{2} W(x,y) \\
\nabla_{2} \nabla_{1} W(x,y) & \nabla_{2}^{2} W(x,y)
\end{bmatrix}
\label{eq:def_W_Hess_mat}
\end{align}
for a $\mathrm{C}^{2}$ function 
$W: \mathbf{R}^{d} \times \mathbf{R}^{d} \to \mathbf{R}$. 
We can show the {\newsmoothness} of $\mathcal{W}$
under an assumption for $H_{W}(x,y)$. 

\begin{thm}[$L$-smoothness of $\mathcal{W}$]
\label{thm:func_inter_L_smooth}
For a symmetric $\mathrm{C}^{2}$ function $W: \mathbf{R}^{d} \times \mathbf{R}^{d} \to \mathbf{R}$, 
let $H_{W}(x,y)$ be the matrix defined by~\eqref{eq:def_W_Hess_mat}. 
Assume that 
\begin{align}
r_{W}
:=
\sup_{x,y \in \mathbf{R}^{d}}
\sigma
\left(
H_{W}(x,y)
\right)
\label{eq:2diff_W_eigen_finite}
\end{align}
is finite, 
where $\sigma$ is the operator indicating the spectral radius of a square matrix.
Then
the functional $\mathcal{W}$ in~\eqref{eq:def_of_F_W_interact_en} 
is {\newsmooth} with $L = r_{W}$. 
\end{thm}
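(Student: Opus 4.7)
The plan is to reduce the inequality to a pointwise second-order Taylor estimate on the function $W$, applied along the straight-line interpolation between $(S(x),S(y))$ and $(T(x),T(y))$ in $\mathbf{R}^{d}\times\mathbf{R}^{d}$, and then to integrate against $\mu\otimes\mu$, using the symmetry $W(x,y)=W(y,x)$ to recognize $\mathcal{G}_{\mathcal{W}}$.

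First, I would write
\[
\mathcal{W}(T_{\#}\mu)-\mathcal{W}(S_{\#}\mu)
=\tfrac{1}{2}\iint\bigl[W(T(x),T(y))-W(S(x),S(y))\bigr]\,\mathrm{d}\mu(x)\mathrm{d}\mu(y),
\]
and, for fixed $x,y$, set $\Delta_{x}:=T(x)-S(x)$, $\Delta_{y}:=T(y)-S(y)$, and
$g(t):=W\bigl(S(x)+t\Delta_{x},S(y)+t\Delta_{y}\bigr)$ on $[0,1]$. Since $W$ is $\mathrm{C}^{2}$, Taylor's theorem with integral remainder gives $g(1)-g(0)=g'(0)+\int_{0}^{1}(1-t)g''(t)\,\mathrm{d}t$, where
$g'(0)=\langle\nabla_{1}W(S(x),S(y)),\Delta_{x}\rangle+\langle\nabla_{2}W(S(x),S(y)),\Delta_{y}\rangle$
and $g''(t)=\bigl[\Delta_{x}^{\top},\Delta_{y}^{\top}\bigr]\,H_{W}\bigl(S(x)+t\Delta_{x},S(y)+t\Delta_{y}\bigr)\bigl[\Delta_{x}^{\top},\Delta_{y}^{\top}\bigr]^{\top}$.

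The key quantitative step is to use the hypothesis~\eqref{eq:2diff_W_eigen_finite}: since $H_{W}$ is symmetric (as $W\in\mathrm{C}^{2}$), its spectral radius coincides with its operator norm, so $|g''(t)|\leq r_{W}(\|\Delta_{x}\|^{2}+\|\Delta_{y}\|^{2})$ uniformly in $t$. Integrating $(1-t)$ against this bound yields the pointwise estimate
\[
W(T(x),T(y))-W(S(x),S(y))
\leq g'(0)+\tfrac{r_{W}}{2}\bigl(\|\Delta_{x}\|^{2}+\|\Delta_{y}\|^{2}\bigr).
\]

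Next I would integrate this inequality against $\tfrac{1}{2}\,\mathrm{d}\mu(x)\mathrm{d}\mu(y)$. The quadratic remainder contributes $\tfrac{r_{W}}{2}\int\|T(x)-S(x)\|^{2}\,\mathrm{d}\mu(x)$ after using that $\mu$ is a probability measure and that the two terms $\|\Delta_{x}\|^{2}$ and $\|\Delta_{y}\|^{2}$ integrate to the same value. For the linear term $g'(0)$, I would symmetrize: by $W(x,y)=W(y,x)$ we have $\nabla_{2}W(x,y)=\nabla_{1}W(y,x)$, so swapping the integration variables in the $\nabla_{2}$-piece turns it into the $\nabla_{1}$-piece, and the two halves combine into
\[
\iint\langle\nabla_{1}W(S(x),S(y)),T(x)-S(x)\rangle\,\mathrm{d}\mu(y)\mathrm{d}\mu(x).
\]
The inner $y$-integral is exactly $\mathcal{G}_{\mathcal{W}}[S_{\#}\mu](S(x))$ by~\eqref{eq:def_grad_for_F_W_inter} and the change-of-variables formula for the push-forward, giving the desired first-order term in~\eqref{eq:def_pf_L_smooth}.

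No serious obstacle is anticipated: the estimate is a two-variable analogue of the standard descent lemma for $L$-smooth functions on $\mathbf{R}^{2d}$, and the nontrivial piece is purely bookkeeping, namely the symmetrization that identifies the gradient term. Mild care is needed to justify the applicability of Fubini and the differentiation under the integral sign; these follow from $r_{W}<\infty$ together with $S,T\in L^{2}(\mathbf{R}^{d}\to\mathbf{R}^{d})$ and $\mu\in\mathcal{P}_{2}(\mathbf{R}^{d})$, which ensure that all integrands are dominated by integrable functions of the form $\|T(\cdot)-S(\cdot)\|^{2}$ plus a bounded contribution from $\nabla_{1}W(S(x),S(y))$.
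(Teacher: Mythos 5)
Your argument is correct and follows essentially the same route as the paper: a second-order Taylor expansion of $W$ along the segment joining $(S(x),S(y))$ to $(T(x),T(y))$, a bound on the quadratic remainder by $r_W$ via the spectral radius of $H_W$, and a symmetrization of the $\nabla_1/\nabla_2$ terms to identify $\mathcal{G}_{\mathcal{W}}$. The only cosmetic difference is that you use the integral form of the Taylor remainder while the paper uses the Lagrange (mean-value) form; both deliver the same pointwise estimate $\tfrac{r_W}{2}(\|T(x)-S(x)\|^2+\|T(y)-S(y)\|^2)$.
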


Proofs of these theorems are written in Section~\ref{sec:proofs_convex_smooth}.


\section{Estimate of the convergence rates of the gradient flows}
\label{sec:grad_Lyap_2}

We estimate the convergence rates of the gradient flows in Section~\ref{sec:grad_flow}. 
We assume the {\newconvexity} for functionals $\mathcal{F}$ for the continuous flow, 
and the {\newconvexity} and $L$-smoothness for the discrete flow. 

To illustrate a general principle for the estimate, 
we sketch a method for the estimate in the case of the continuous flow
given by~\eqref{eq:ODE_for_X}--\eqref{eq:ODE_for_X_init} in Section~\ref{sec:grad_flow_and_disc} 
as an example. 
It is based on the method of the Lyapunov function in \cite{wilson2021lyapunov}.
We begin with defining a function $\mathcal{L}(t)$ by 
\begin{align}
\mathcal{L}(t) 
:= 
t \, (\mathcal{F}(\mu_{t}) - \mathcal{F}(\mu_{\ast}))
+ 
D(\mu_{t}, \mu_{\ast}), 
\label{eq:Lyap_for_C_flow_pre}
\end{align}
where $D$ is a non-negative functional measuring a discrepancy between $\mu_{t}$ and $\mu_{\ast}$. 
We call $\mathcal{L}(t)$ a Lyapunov function. 
Then,
by using the equation \eqref{eq:ODE_for_X}--\eqref{eq:ODE_for_X_init} and {\newconvexity} of $\mathcal{F}$, 
we prove that $\mathcal{L}(t)$ is monotone decreasing. 
As a result, 
we have
\begin{align}
t \, (\mathcal{F}(\mu_{t}) - \mathcal{F}(\mu_{\ast}))
\leq 
\mathcal{L}(t) 
\leq 
\mathcal{L}(0)
\quad
\left( 
= D(\mu_{0}, \mu_{\ast})
\right).
\label{eq:Lyap_grad_flow_upper_bound}
\end{align}
These inequalities imply that
the convergence rate of $\mathcal{F}(\mu_{t}) - \mathcal{F}(\mu_{\ast})$ is $\mathrm{O}(1/t)$. 

To realize the above method, 
we need to introduce a discrepancy functional $D$ that is consistent with the {\newconvexity}. 
Suppose that 
maps $S$ and $T$ in $L^{2}(\mathbf{R}^{d} \to \mathbf{R}^{d})$ 
give measures $S_{\#} \mu_{0}$ and $T_{\#} \mu_{0}$ in $\mathcal{P}_{2}(\mathbf{R}^{d})$, respectively. 
Then we define $D$ for these measures by
\begin{align}
D(S_{\#} \mu_{0}, T_{\#} \mu_{0}) := \frac{1}{2} \int \| S(x) - T(x) \|^{2} \, \mathrm{d}\mu_{0}(x). 
\label{eq:def_discr_func}
\end{align}
In addition,
we suppose that the following assumption is satisfied in the rest of this section. 
\begin{assump}
\label{assump:opt_meas_push_forward}
The optimal measure $\mu_{\ast}$ is given by the push-forward of $\mu_{0}$ by a map $X_{\ast} \in L^{2}(\mathbf{R}^{d} \to \mathbf{R}^{d})$, 
i.e., 
$\mu_{\ast} = (X_{\ast})_{\#} \mu_{0}$.
\end{assump}

Under Assumption~\ref{assump:opt_meas_push_forward}, 
we show the convergence rate $\mathrm{O}(1/t)$ for the continuous flow in Theorem~\ref{thm:c_rate_cont_flow} below. 
In the other cases in 
Section~\ref{sec:grad_flow}, 
similar methods can be used for the estimate. 
We show convergence rates for these cases in Theorems~\ref{thm:c_rate_disc_flow}--\ref{thm:c_rate_acc_disc_flow}. 
Proofs of these theorems are given by Section~\ref{sec:proofs_conv_rate}.

\subsection{Convergence rates of the gradient flows}
\label{sec:est_grad}

Convergence rates for the cases in Section~\ref{sec:grad_flow_and_disc}
are given by the following theorems. 

\begin{thm}
\label{thm:c_rate_cont_flow}
Let $X(t,x)$ be the flow map given by the gradient flow \eqref{eq:ODE_for_X}--\eqref{eq:ODE_for_X_init}
and 
let $\mu_{t}$ be the measure given by \eqref{eq:meas_curve_of_grad_flow}. 
Assume that $\mathcal{F}$ is {\newconvex} and 
Assumption~\ref{assump:opt_meas_push_forward} holds. 
Then 
we have
$\mathcal{F}(\mu_{t}) - \mathcal{F}(\mu_{\ast}) = \mathrm{O}(1/t)$ 
as $t \to \infty$. 
\end{thm}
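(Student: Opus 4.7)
The plan is to implement precisely the Lyapunov-function recipe sketched in equations~\eqref{eq:Lyap_for_C_flow_pre}--\eqref{eq:Lyap_grad_flow_upper_bound}, taking the discrepancy functional $D$ from~\eqref{eq:def_discr_func} and using the transport map $X_{\ast}$ guaranteed by Assumption~\ref{assump:opt_meas_push_forward}. Explicitly, I set
\[
\mathcal{L}(t) := t\,(\mathcal{F}(\mu_{t})-\mathcal{F}(\mu_{\ast})) + \tfrac{1}{2}\int \|X(t,x)-X_{\ast}(x)\|^{2}\,\mathrm{d}\mu_{0}(x).
\]
Since $\mathcal{L}(0)=\tfrac{1}{2}\int \|x-X_{\ast}(x)\|^{2}\,\mathrm{d}\mu_{0}(x)$ is a finite constant (by $X_{\ast}\in L^{2}$ and $\mu_{0}\in\mathcal{P}_{2}(\mathbf{R}^{d})$) and $D(\mu_{t},\mu_{\ast})\ge 0$, the desired $\mathrm{O}(1/t)$ rate follows at once from monotonicity $\mathcal{L}(t)\le \mathcal{L}(0)$.

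The core computation is to differentiate $\mathcal{L}$ term by term. The first summand contributes $\mathcal{F}(\mu_{t})-\mathcal{F}(\mu_{\ast})+t\tfrac{\mathrm{d}}{\mathrm{d}t}\mathcal{F}(\mu_{t})$; by Proposition~\ref{prop:decrease_of_F_mu_t} the $t$-weighted piece equals $-t\int \|\mathcal{G}_{\mathcal{F}}[\mu_{t}](X(t,x))\|^{2}\,\mathrm{d}\mu_{0}(x)\le 0$. Differentiating $D$ under the integral and substituting the flow equation~\eqref{eq:ODE_for_X} yields
\[
\tfrac{\mathrm{d}}{\mathrm{d}t}D(\mu_{t},\mu_{\ast}) = -\int \langle X(t,x)-X_{\ast}(x),\, \mathcal{G}_{\mathcal{F}}[\mu_{t}](X(t,x))\rangle\,\mathrm{d}\mu_{0}(x).
\]

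Now I invoke {\newconvexity} (Definition~\ref{dfn:pf_convex}) with the natural choice $S=X(t,\cdot)$, $T=X_{\ast}$, $\mu=\mu_{0}$, whose push-forwards are exactly $\mu_{t}$ and $\mu_{\ast}$. Inequality~\eqref{eq:def_pf_convex} then reads
\[
\mathcal{F}(\mu_{\ast})-\mathcal{F}(\mu_{t}) \ge \int \langle \mathcal{G}_{\mathcal{F}}[\mu_{t}](X(t,x)),\, X_{\ast}(x)-X(t,x)\rangle\,\mathrm{d}\mu_{0}(x),
\]
which, after a sign flip, is precisely the statement that $\tfrac{\mathrm{d}}{\mathrm{d}t}D(\mu_{t},\mu_{\ast})\le -(\mathcal{F}(\mu_{t})-\mathcal{F}(\mu_{\ast}))$. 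Adding this to the previous computation, the $\mathcal{F}(\mu_{t})-\mathcal{F}(\mu_{\ast})$ terms cancel exactly and only the manifestly nonpositive gradient-norm term survives, giving $\mathcal{L}'(t)\le 0$. Integrating and dropping the nonnegative $D$ piece yields $t\,(\mathcal{F}(\mu_{t})-\mathcal{F}(\mu_{\ast}))\le \mathcal{L}(0)$, which is the claim.

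The expected main obstacle is a regularity issue rather than a structural one: justifying the interchange of $\tfrac{\mathrm{d}}{\mathrm{d}t}$ with the integral defining $D(\mu_{t},\mu_{\ast})$, and ensuring $t\mapsto \mathcal{F}(\mu_{t})$ is absolutely continuous so that Proposition~\ref{prop:decrease_of_F_mu_t} integrates to a legitimate Lyapunov bound. Both require mild integrability of $\partial_{t}X(t,\cdot)=-\mathcal{G}_{\mathcal{F}}[\mu_{t}]\circ X(t,\cdot)$ in $L^{2}(\mu_{0})$ uniformly on compact $t$-intervals, which follows from the standing global-existence assumption on~\eqref{eq:ODE_for_X}--\eqref{eq:ODE_for_X_init} together with a dominated-convergence argument. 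Once these analytic preliminaries are in hand, the proof is the algebraic cancellation above, hinging entirely on the well-chosen pair $(S,T)=(X(t,\cdot),X_{\ast})$ in the {\newconvexity} inequality.
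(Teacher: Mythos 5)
Your proposal is correct and follows essentially the same Lyapunov-function argument as the paper: the same functional $\mathcal{L}(t) = t(\mathcal{F}(\mu_t)-\mathcal{F}(\mu_*)) + D(\mu_t,\mu_*)$, the same differentiation of the two summands via Proposition~\ref{prop:decrease_of_F_mu_t} and the flow equation~\eqref{eq:ODE_for_X}, and the same choice $(S,T)=(X(t,\cdot),X_*)$ in the {\newconvexity} inequality to obtain $\mathcal{L}'(t)\le 0$. Your closing remarks on interchange of differentiation and integration are a reasonable refinement of the (implicit) regularity hypotheses the paper leaves unstated, but they do not change the structure of the argument.
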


\begin{thm}
\label{thm:c_rate_disc_flow}
Let $X_{n}(x)$ be the flow map given by the gradient flow \eqref{eq:disc_grad_flow_for_X}--\eqref{eq:meas_hat_mu_n}
and 
let $\hat{\mu}_{n}$ be the measure given by \eqref{eq:disc_PF_of_mu0_by_X}. 
Assume that $\mathcal{F}$ is {\newconvex} and {\newsmooth}. 
Furthermore, 
assume that $\gamma \leq 3/(2 L)$ for $\gamma$ in \eqref{eq:disc_grad_flow_for_X}. 
Then, 
under Assumption~\ref{assump:opt_meas_push_forward}, 
we have
$\mathcal{F}(\hat{\mu}_{n}) - \mathcal{F}(\mu_{\ast}) = \mathrm{O}(1/n)$ 
as $n \to \infty$. 
\end{thm}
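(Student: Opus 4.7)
The plan is to adapt the continuous-time Lyapunov argument sketched in the excerpt to the discrete scheme, choosing the indexing of the time weight so that the step-size constraint $\gamma \leq 3/(2L)$ emerges naturally. Writing $g_n(x) := \mathcal{G}_{\mathcal{F}}[\hat{\mu}_n](X_n(x))$ and $\|g_n\|_{\mu_0}^2 := \int \|g_n(x)\|^2\,\mathrm{d}\mu_0(x)$, I would define the discrete Lyapunov functional
\begin{equation*}
\mathcal{L}_n \;:=\; \gamma\,(n+1)\,\bigl(\mathcal{F}(\hat{\mu}_n) - \mathcal{F}(\mu_\ast)\bigr) + D(\hat{\mu}_n, \mu_\ast),
\end{equation*}
where $D$ is the discrepancy functional in~\eqref{eq:def_discr_func}. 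Under Assumption~\ref{assump:opt_meas_push_forward}, $D(\hat{\mu}_n,\mu_\ast) = \tfrac{1}{2}\int \|X_n(x) - X_\ast(x)\|^2\,\mathrm{d}\mu_0(x)$, so $\mathcal{L}_n$ is well-defined and nonnegative. The goal is to prove $\mathcal{L}_{n+1} \leq \mathcal{L}_n$, which then yields $\gamma(n+1)(\mathcal{F}(\hat{\mu}_n) - \mathcal{F}(\mu_\ast)) \leq \mathcal{L}_0 = \gamma(\mathcal{F}(\mu_0) - \mathcal{F}(\mu_\ast)) + D(\mu_0, \mu_\ast)$, i.e.\ the $\mathrm{O}(1/n)$ rate.

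The calculation of the one-step variation splits into three ingredients. First, expanding the squared norm using~\eqref{eq:disc_grad_flow_for_X} gives
\begin{equation*}
D(\hat{\mu}_{n+1}, \mu_\ast) - D(\hat{\mu}_n, \mu_\ast)
\;=\; -\gamma\!\int \!\langle g_n(x),\, X_n(x) - X_\ast(x)\rangle\,\mathrm{d}\mu_0(x) + \tfrac{\gamma^2}{2}\|g_n\|_{\mu_0}^2.
\end{equation*}
Second, applying the {\newconvexity} of $\mathcal{F}$ with $S = X_n$ and $T = X_\ast$ yields
\begin{equation*}
\mathcal{F}(\hat{\mu}_n) - \mathcal{F}(\mu_\ast) \;\leq\; \int \langle g_n(x),\, X_n(x) - X_\ast(x)\rangle\,\mathrm{d}\mu_0(x).
\end{equation*}
Third, applying the {\newsmoothness} with $S = X_n$ and $T = X_{n+1} = X_n - \gamma g_n$ produces the descent estimate
\begin{equation*}
\mathcal{F}(\hat{\mu}_{n+1}) - \mathcal{F}(\hat{\mu}_n) \;\leq\; -\gamma\!\left(1 - \tfrac{L\gamma}{2}\right)\|g_n\|_{\mu_0}^2.
\end{equation*}

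Now I would combine these three ingredients by writing $\mathcal{L}_{n+1} - \mathcal{L}_n = \gamma(n+2)(\mathcal{F}(\hat{\mu}_{n+1}) - \mathcal{F}(\hat{\mu}_n)) + \gamma(\mathcal{F}(\hat{\mu}_n) - \mathcal{F}(\mu_\ast)) + (D(\hat{\mu}_{n+1},\mu_\ast) - D(\hat{\mu}_n,\mu_\ast))$. Substituting the smoothness bound into the first term, the convexity bound into the second term (which exactly cancels the cross term $-\gamma\int\langle g_n, X_n - X_\ast\rangle\,\mathrm{d}\mu_0$ from $D_{n+1}-D_n$), and collecting, the right-hand side reduces to
\begin{equation*}
\mathcal{L}_{n+1} - \mathcal{L}_n \;\leq\; \gamma^2\!\left[\,-(n+2)\!\left(1 - \tfrac{L\gamma}{2}\right) + \tfrac{1}{2}\,\right]\|g_n\|_{\mu_0}^2.
\end{equation*}
The bracket is worst at $n=0$, where it is nonpositive precisely when $2(1 - L\gamma/2) \geq 1/2$, i.e.\ $\gamma \leq 3/(2L)$. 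Hence $\mathcal{L}_n$ is monotone nonincreasing, and the $\mathrm{O}(1/n)$ conclusion follows.

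The only delicate point I expect is matching the discrete-time weight to the step-size bound. Using weight $n$ in place of $n+1$ in $\mathcal{L}_n$ would force the stricter condition $\gamma \leq 1/L$; the choice $(n+1)$ is what gives the extra slack of $1/2$ in the bracket and recovers $3/(2L)$. Everything else is algebraic bookkeeping analogous to the standard Euclidean argument, transposed to transport maps via the two new inequalities.
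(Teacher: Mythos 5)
Your proof is correct and follows essentially the same Lyapunov argument as the paper: three ingredients (the $L$-smoothness descent bound, the transport-convexity bound, and the algebraic expansion of $D(\hat\mu_{n+1},\mu_\ast)-D(\hat\mu_n,\mu_\ast)$), combined into a one-step difference whose bracket is controlled by the choice $\gamma\leq 3/(2L)$.

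The only real deviation is cosmetic but worth noting. You put the time weight $(n+1)$ in $\mathcal{L}_n$ so that the worst case of the bracket occurs at $n=0$ and gives exactly $\gamma\leq 3/(2L)$, allowing the monotonicity chain to start from $\mathcal{L}_0$. The paper instead uses weight $n$; its bracket then requires $(n+1)(1-L\gamma/2)\geq 1/2$, whose worst case over $n\geq 1$ is $n=1$, giving the same $\gamma\leq 3/(2L)$, and the paper simply starts the chain at $\mathcal{L}_1$ rather than $\mathcal{L}_0$. So your closing remark that weight $n$ ``would force the stricter condition $\gamma\leq 1/L$'' is a slight overclaim --- that condition is only needed if one insists on monotonicity from $n=0$; dropping the $n=0$ step, as the paper does, recovers $\gamma\leq 3/(2L)$ at the cost of bounding by $\mathcal{L}_1$ instead of $\mathcal{L}_0$. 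Both versions yield the same $\mathrm{O}(1/n)$ rate; your index shift is a marginally cleaner bookkeeping choice.
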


\subsection{Convergence rates of the accelerated gradient flows}
\label{sec:est_acc_grad}

Convergence rates for the cases in Section~\ref{sec:acc_grad_flow_and_disc}
are given by the following theorems. 

\begin{thm}
\label{thm:c_rate_acc_cont_flow}
Let $X^{\mathrm{ac}}(t,x)$ and $Y^{\mathrm{ac}}(t,x)$ be the flow maps given 
by the gradient flow \eqref{eq:AccODE_for_XY_1_moment}--\eqref{eq:AccODE_for_XY_init}
with $r > 0$
and 
let $\mu_{t}^{\mathrm{ac}}$ and $\nu_{t}^{\mathrm{ac}}$ be the measures given 
by \eqref{eq:PF_of_mu0_by_X_Acc} and \eqref{eq:PF_of_mu0_by_Y_Acc}, respectively. 
Assume that $\mathcal{F}$ is {\newconvex} and 
Assumption~\ref{assump:opt_meas_push_forward} holds. 
Then 
we have
$\mathcal{F}(\mu_{t}^{\mathrm{ac}}) - \mathcal{F}(\mu_{\ast}) = \mathrm{O}(1/t^{r})$ 
as $t \to \infty$. 
\end{thm}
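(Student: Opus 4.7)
The plan is to apply the Lyapunov-function framework of \cite{wilson2021lyapunov}, as sketched in the preamble of Section~\ref{sec:grad_Lyap_2}, to the accelerated system \eqref{eq:AccODE_for_XY_1_moment}--\eqref{eq:AccODE_for_XY_init}. Noting that \eqref{eq:AccODE_for_XY_1_moment} rewrites as $Y^{\mathrm{ac}}(t,x) = X^{\mathrm{ac}}(t,x) + (t/r)\,\partial_{t} X^{\mathrm{ac}}(t,x)$, so that $Y^{\mathrm{ac}}$ plays the role of the momentum-shifted iterate in the continuous-time Nesterov analysis, I would introduce
\begin{align*}
\mathcal{L}(t) := t^{r}\bigl(\mathcal{F}(\mu_{t}^{\mathrm{ac}}) - \mathcal{F}(\mu_{\ast})\bigr) + D(\nu_{t}^{\mathrm{ac}}, \mu_{\ast}),
\end{align*}
where $D$ is the discrepancy functional in \eqref{eq:def_discr_func}, which under Assumption~\ref{assump:opt_meas_push_forward} reads $D(\nu_{t}^{\mathrm{ac}}, \mu_{\ast}) = \tfrac{1}{2}\int \|Y^{\mathrm{ac}}(t,x) - X_{\ast}(x)\|^{2}\,\dd\mu_{0}(x)$. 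The goal is to establish $\mathcal{L}'(t) \leq 0$ and then conclude from $\mathcal{L}(t) \geq t^{r}(\mathcal{F}(\mu_{t}^{\mathrm{ac}}) - \mathcal{F}(\mu_{\ast}))$.

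For the first summand, differentiating via \eqref{eq:char_vec_field_for_F_mu_t} and substituting \eqref{eq:AccODE_for_XY_1_moment} gives
\begin{align*}
\frac{\dd}{\dd t}\bigl[t^{r}(\mathcal{F}(\mu_{t}^{\mathrm{ac}}) - \mathcal{F}(\mu_{\ast}))\bigr]
= rt^{r-1}(\mathcal{F}(\mu_{t}^{\mathrm{ac}}) - \mathcal{F}(\mu_{\ast})) + rt^{r-1}\int \bigl\langle \mathcal{G}_{\mathcal{F}}[\mu_{t}^{\mathrm{ac}}](X^{\mathrm{ac}}),\, Y^{\mathrm{ac}} - X^{\mathrm{ac}}\bigr\rangle\,\dd\mu_{0},
\end{align*}
while differentiating $D(\nu_{t}^{\mathrm{ac}}, \mu_{\ast})$ and substituting \eqref{eq:AccODE_for_XY_2_vecfield} gives
\begin{align*}
\frac{\dd}{\dd t} D(\nu_{t}^{\mathrm{ac}}, \mu_{\ast}) = -rt^{r-1}\int \bigl\langle \mathcal{G}_{\mathcal{F}}[\mu_{t}^{\mathrm{ac}}](X^{\mathrm{ac}}),\, Y^{\mathrm{ac}} - X_{\ast}\bigr\rangle\,\dd\mu_{0}.
\end{align*}
Adding these and rewriting $Y^{\mathrm{ac}} - X^{\mathrm{ac}} = (Y^{\mathrm{ac}} - X_{\ast}) - (X^{\mathrm{ac}} - X_{\ast})$ causes the inner products containing $Y^{\mathrm{ac}} - X_{\ast}$ to cancel, leaving
\begin{align*}
\mathcal{L}'(t) = rt^{r-1}\Bigl[(\mathcal{F}(\mu_{t}^{\mathrm{ac}}) - \mathcal{F}(\mu_{\ast})) - \int \bigl\langle \mathcal{G}_{\mathcal{F}}[\mu_{t}^{\mathrm{ac}}](X^{\mathrm{ac}}),\, X^{\mathrm{ac}} - X_{\ast}\bigr\rangle\,\dd\mu_{0}\Bigr].
\end{align*}
Applying the \newconvexity\ (Definition~\ref{dfn:pf_convex}) with $S = X^{\mathrm{ac}}(t,\cdot)$, $T = X_{\ast}$, and $\mu = \mu_{0}$ shows the bracket is non-positive. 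Since $Y^{\mathrm{ac}}(0,\cdot)$ is the identity, $\nu_{0}^{\mathrm{ac}} = \mu_{0}$ and hence $\mathcal{L}(0) = D(\mu_{0}, \mu_{\ast}) < \infty$, so $t^{r}(\mathcal{F}(\mu_{t}^{\mathrm{ac}}) - \mathcal{F}(\mu_{\ast})) \leq D(\mu_{0}, \mu_{\ast})$, which is the claimed $\mathrm{O}(1/t^{r})$ rate.

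The main obstacle I anticipate is technical bookkeeping around the $t = 0$ singularity of the coefficient $r/t$ in \eqref{eq:AccODE_for_XY_1_moment}: the differentiation identities above hold rigorously only for $t > 0$, so passing from $\mathcal{L}'(t) \leq 0$ on $(0, \infty)$ to the global bound $\mathcal{L}(t) \leq \mathcal{L}(0)$ requires checking right-continuity of $\mathcal{L}$ at the origin (which follows from $t^{r} \to 0$ and the continuity of $Y^{\mathrm{ac}}$ at $t = 0$). Beyond this, the argument is a direct measure-valued analogue of the Euclidean Nesterov Lyapunov analysis, and the crucial structural point is that \eqref{eq:def_pf_convex} quantifies convexity precisely against $\mathcal{G}_{\mathcal{F}}$ evaluated along the flow map $X^{\mathrm{ac}}(t, \cdot)$, which is exactly the form of the cross term produced by differentiating both $t^{r}(\mathcal{F} - \mathcal{F}(\mu_{\ast}))$ and $D(\nu_{t}^{\mathrm{ac}}, \mu_{\ast})$.
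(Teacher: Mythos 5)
Your proposal matches the paper's proof essentially line for line: the same Lyapunov function $\mathcal{L}^{\mathrm{ac}}(t) = t^{r}(\mathcal{F}(\mu_{t}^{\mathrm{ac}}) - \mathcal{F}(\mu_{\ast})) + D(\nu_{t}^{\mathrm{ac}}, \mu_{\ast})$, the same cancellation of the $Y^{\mathrm{ac}}$-terms after differentiating both summands via \eqref{eq:AccODE_for_XY_1_moment}--\eqref{eq:AccODE_for_XY_2_vecfield}, and the same application of transport convexity with $S = X^{\mathrm{ac}}(t,\cdot)$, $T = X_{\ast}$ to show $\mathcal{L}'(t) \leq 0$. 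Your additional remark about the $t=0$ singularity of $r/t$ is a sound observation that the paper does not spell out, but otherwise this is the paper's argument.
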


\begin{thm}
\label{thm:c_rate_acc_disc_flow}
We define
the functional $\mathfrak{F}$
and
the sequence $\{ a_{n} \}$ in 
\eqref{eq:acc_disc_grad_flow_for_X}--\eqref{eq:acc_disc_grad_flow_for_Y}
by 
\begin{align}
& \mathfrak{F}[X, Y, Z](x) := X(x) - \frac{1}{L} \, \mathcal{G}_{\mathcal{F}}[X_{\#} \mu_{0}](X(x)), \quad \text{and}
\label{eq:def_mathcal_G} \\
& a_{n} := \frac{(n+1)(n+2)}{16L},
\label{eq:def_A_n}
\end{align}
respectively. 
Let $X^{\mathrm{ac}}_{n}(x)$, $Y^{\mathrm{ac}}_{n}(x)$, and $Z^{\mathrm{ac}}_{n}(x)$ be the flow maps given 
by the gradient flow 
\eqref{eq:acc_disc_grad_flow_for_X}--\eqref{eq:disc_AccODE_for_XYZ_init}
and 
let $\hat{\mu}_{n}^{\mathrm{ac}}$, $\hat{\nu}_{n}^{\mathrm{ac}}$, and $\hat{\xi}_{n}^{\mathrm{ac}}$ be the measures given 
by \eqref{eq:disc_PF_of_mu0_by_X_Acc}, \eqref{eq:disc_PF_of_mu0_by_Y_Acc}, and \eqref{eq:disc_PF_of_mu0_by_Z_Acc}, respectively. 
Assume that $\mathcal{F}$ is {\newconvex} and {\newsmooth}. 
Then, 
under Assumption~\ref{assump:opt_meas_push_forward}, 
we have
$\mathcal{F}(\hat{\nu}_{n}^{\mathrm{ac}}) - \mathcal{F}(\mu_{\ast}) = \mathrm{O}(1/n^{2})$ 
as $n \to \infty$. 
\end{thm}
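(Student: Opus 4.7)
The plan is to adapt the discrete Lyapunov-function technique of \cite{wilson2021lyapunov} to the transport-map setting. I would define
\[
\mathcal{L}_{n} := a_{n}\bigl(\mathcal{F}(\hat{\nu}_{n}^{\mathrm{ac}}) - \mathcal{F}(\mu_{\ast})\bigr) + D(\hat{\xi}_{n}^{\mathrm{ac}}, \mu_{\ast}),
\]
with the discrepancy functional $D$ from \eqref{eq:def_discr_func}, and prove $\mathcal{L}_{n+1}\le \mathcal{L}_{n}$ for every $n\ge 0$. Since $a_{n}=(n+1)(n+2)/(16L)=\Theta(n^{2}/L)$ by \eqref{eq:def_A_n}, monotone decrease immediately yields $\mathcal{F}(\hat{\nu}_{n}^{\mathrm{ac}})-\mathcal{F}(\mu_{\ast}) \le \mathcal{L}_{0}/a_{n} = \mathrm{O}(1/n^{2})$ under Assumption~\ref{assump:opt_meas_push_forward}.

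To establish the one-step inequality, I would first decompose
\[
a_{n+1}\bigl(\mathcal{F}(\hat{\nu}_{n+1}^{\mathrm{ac}})-\mathcal{F}(\mu_{\ast})\bigr) - a_{n}\bigl(\mathcal{F}(\hat{\nu}_{n}^{\mathrm{ac}})-\mathcal{F}(\mu_{\ast})\bigr) = a_{n+1}\bigl[\mathcal{F}(\hat{\nu}_{n+1}^{\mathrm{ac}})-\mathcal{F}(\hat{\mu}_{n+1}^{\mathrm{ac}})\bigr] + a_{n}\bigl[\mathcal{F}(\hat{\mu}_{n+1}^{\mathrm{ac}}) - \mathcal{F}(\hat{\nu}_{n}^{\mathrm{ac}})\bigr] + (a_{n+1}-a_{n})\bigl[\mathcal{F}(\hat{\mu}_{n+1}^{\mathrm{ac}})-\mathcal{F}(\mu_{\ast})\bigr],
\]
and set $g(x):=\mathcal{G}_{\mathcal{F}}[\hat{\mu}_{n+1}^{\mathrm{ac}}](X_{n+1}^{\mathrm{ac}}(x))$. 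Since \eqref{eq:def_mathcal_G} gives $Y_{n+1}^{\mathrm{ac}}=X_{n+1}^{\mathrm{ac}}-\tfrac{1}{L}g$, applying {\newsmoothness} to the first bracket with $(S,T)=(X_{n+1}^{\mathrm{ac}},Y_{n+1}^{\mathrm{ac}})$ yields the descent estimate $-\tfrac{1}{2L}\int\|g\|^{2}\,\mathrm{d}\mu_{0}$. Applying {\newconvexity} with $(S,T)=(X_{n+1}^{\mathrm{ac}},Y_{n}^{\mathrm{ac}})$ and $(S,T)=(X_{n+1}^{\mathrm{ac}},X_{\ast})$ bounds the other two brackets by $\int\langle g, X_{n+1}^{\mathrm{ac}}-Y_{n}^{\mathrm{ac}}\rangle\,\mathrm{d}\mu_{0}$ and $\int\langle g, X_{n+1}^{\mathrm{ac}}-X_{\ast}\rangle\,\mathrm{d}\mu_{0}$ respectively. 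The increment of $D$ is computed directly from \eqref{eq:acc_disc_grad_flow_for_Z}:
\[
D(\hat{\xi}_{n+1}^{\mathrm{ac}},\mu_{\ast}) - D(\hat{\xi}_{n}^{\mathrm{ac}},\mu_{\ast}) = -(a_{n+1}-a_{n})\int\langle g, Z_{n}^{\mathrm{ac}}-X_{\ast}\rangle\,\mathrm{d}\mu_{0} + \tfrac{(a_{n+1}-a_{n})^{2}}{2}\int\|g\|^{2}\,\mathrm{d}\mu_{0}.
\]

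The decisive algebraic identity is that the sum of all linear-in-$g$ contributions equals
\[
\int\bigl\langle g,\, a_{n+1}X_{n+1}^{\mathrm{ac}} - a_{n}Y_{n}^{\mathrm{ac}} - (a_{n+1}-a_{n})Z_{n}^{\mathrm{ac}}\bigr\rangle\,\mathrm{d}\mu_{0},
\]
which vanishes identically because \eqref{eq:acc_disc_grad_flow_for_X} rewrites as the barycentric relation $a_{n+1}X_{n+1}^{\mathrm{ac}} = a_{n}Y_{n}^{\mathrm{ac}} + (a_{n+1}-a_{n})Z_{n}^{\mathrm{ac}}$. What remains is the quadratic coefficient $\tfrac{1}{2}\bigl((a_{n+1}-a_{n})^{2} - a_{n+1}/L\bigr)\int\|g\|^{2}\,\mathrm{d}\mu_{0}$, which is nonpositive exactly when $L(a_{n+1}-a_{n})^{2}\le a_{n+1}$. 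Substituting \eqref{eq:def_A_n} gives $a_{n+1}-a_{n} = (n+2)/(8L)$ and reduces the condition to $(n+2)/64 \le (n+3)/16$, which holds for every $n\ge 0$.

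The main obstacle is the simultaneous bookkeeping of the linear-in-$g$ terms and the verification of the quadratic inequality: the former dictates the precise choice of $X_{n+1}^{\mathrm{ac}}$ as the specific convex combination of $Y_{n}^{\mathrm{ac}}$ and $Z_{n}^{\mathrm{ac}}$ in \eqref{eq:acc_disc_grad_flow_for_X}, while the latter caps the growth of $\{a_{n}\}$ at $\Theta(n^{2}/L)$. These two constraints together force the choices \eqref{eq:def_mathcal_G} and \eqref{eq:def_A_n} and produce the accelerated Nesterov-type rate $\mathrm{O}(1/n^{2})$, mirroring the continuous case of Theorem~\ref{thm:c_rate_acc_cont_flow} with $r=2$.
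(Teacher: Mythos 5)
Your proof is correct and follows essentially the same Lyapunov-function argument as the paper (same $\mathcal{L}_{n}^{\mathrm{ac}}$, same three applications of transport convexity/smoothness, same barycentric identity $a_{n+1}X_{n+1}^{\mathrm{ac}}=a_{n}Y_{n}^{\mathrm{ac}}+(a_{n+1}-a_{n})Z_{n}^{\mathrm{ac}}$, same final quadratic coefficient $\tfrac{1}{2}\bigl((a_{n+1}-a_{n})^{2}-a_{n+1}/L\bigr)$). Your bookkeeping is tidier than the paper's: you expand $D(\hat{\xi}_{n+1}^{\mathrm{ac}},\mu_{\ast})-D(\hat{\xi}_{n}^{\mathrm{ac}},\mu_{\ast})$ pivoting on $Z_{n}^{\mathrm{ac}}$ rather than on $Z_{n+1}^{\mathrm{ac}}$, so that the linear-in-$g$ terms cancel outright against the barycentric relation, whereas the paper pivots on $Z_{n+1}^{\mathrm{ac}}$ and must route the cancellation through the auxiliary quantities $\mathcal{S}_{n}^{\mathrm{ac}},\mathcal{T}_{n}^{\mathrm{ac}},\mathcal{U}_{n}^{\mathrm{ac}},\Omega_{n}^{\mathrm{ac}}$ in Lemmas~\ref{thm:expr_R_n_ac} and~\ref{lem:bound_S_n_ac}, picking up a $+(a_{n+1}-a_{n})^{2}\int\|g\|^{2}$ term from $\Omega_{n}^{\mathrm{ac}}=(a_{n+1}-a_{n})(Z_{n+1}^{\mathrm{ac}}-Z_{n}^{\mathrm{ac}})$ that then partly cancels the $-\tfrac{1}{2}(a_{n+1}-a_{n})^{2}\int\|g\|^{2}$ from $\mathcal{R}_{n}^{\mathrm{ac}}$; the two routes converge to identical final inequalities. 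You also silently correct a slip in the paper's arithmetic at~\eqref{eq:coeff_given_by_A_n}: with $a_{n}=(n+1)(n+2)/(16L)$ one has $a_{n+1}-a_{n}=(n+2)/(8L)$, not $(n+2)/(4L)$; the sign conclusion $(a_{n+1}-a_{n})^{2}-a_{n+1}/L\le 0$ is unaffected, but your version of the computation is the correct one.
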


\section{Numerical experiments}
\label{sec:num_ex}

We show effectiveness of the proposed acceleration 
for minimization of functionals $\mathcal{F}$ by several examples. 
For this purpose, 
we apply the algorithms given 
by~\eqref{eq:disc_grad_flow_for_X}--\eqref{eq:meas_hat_mu_n} 
and~\eqref{eq:acc_disc_grad_flow_for_X}--\eqref{eq:disc_AccODE_for_XYZ_init}
with~\eqref{eq:def_mathcal_G}--\eqref{eq:def_A_n}
to some functionals%
\footnote{
We conduct experiments just to confirm the effect of our acceleration comparing the normal gradient descent.
Comparison with other methods like the SVGD or the methods with Langevin dynamics is left to future work, 
although their frameworks are slightly different from ours.}. 

For the former algorithm (the normal gradient descent), 
we can use any $\gamma$ with $0 < \gamma \leq 3/(2L)$ as a time step
according to Theorem~\ref{thm:c_rate_disc_flow}. 
Here we note that the
{\newsmoothness}
holds for any $L$ that is larger than the smallest one. 
Therefore
we can use the time step $\gamma = (3/2) \tau$ with 
\begin{align}
\tau = \frac{1}{L}
\label{eq:def_tau}
\end{align}
for any $L$ for which the 
{\newsmoothness}
holds. 
For the latter algorithm (the accelerated gradient descent), 
thinking in the same way as the former one, 
we use $\tau$ in place of $1/L$ in~\eqref{eq:acc_disc_grad_flow_for_X}--\eqref{eq:disc_AccODE_for_XYZ_init} 
and~\eqref{eq:def_mathcal_G}--\eqref{eq:def_A_n}. 
Taking the above observation into account, we use 
\begin{align}
\tau = 0.0025, 0.005, 0.01
\label{eq:used_tau}
\end{align}
in the following experiments.

\subsection{Example 1}
Let $d = 1$ and 
let $\mathcal{F}_{1}$ be defined by
\begin{align}
\mathcal{F}_{1}(\mu)
:=
\mathcal{W}_{1}(\mu) + \mathcal{V}_{1}(\mu),
\label{eq:ex_for_num_comp_1}
\end{align}
where
\begin{align}
& \mathcal{W}_{1}(\mu) 
:= 
\int_{\mathbf{R}} \int_{\mathbf{R}} \| x - y \|^{2} \, \mathrm{d}\mu(x) \, \mathrm{d}\mu(y)
\quad \text{and}
\notag \\
& \mathcal{V}_{1}(\mu) 
:= 
\int_{\mathbf{R}} \| x - 1 \|^{2} \, \mathrm{d}\mu(x). 
\notag 
\end{align}
The functionals $\mathcal{W}_{1}$ and $\mathcal{V}_{1}$ are defined by $W(x,y) = \|x-y\|^{2}$ and $V(x) = \|x-1\|^{2}$, respectively. 
By Theorems~\ref{thm:conv_of_U}--\ref{thm:func_inter_L_smooth}, 
these functionals are 
{\newconvex} and 
{\newsmooth} with $L = 2$. 
Therefore $\mathcal{F}_{1}$ is also {\newconvex} and {\newsmooth} with $L = 4$. 
Then the values of the parameter $\tau$ in~\eqref{eq:used_tau} are feasible. 
In addition, 
the optimal measure for $\mathcal{F}_{1}$ is the Dirac measure $\delta_{1}$. 

Results for $\mathcal{F}_{1}$ are shown in Figure~\ref{fig:one_dim_inter}. 
In some beginning steps, 
the normal algorithm outperforms the accelerated one. 
After those steps, 
however, 
the accelerated algorithm outperforms the normal one
and
the decay rate of the values of $\mathcal{F}_{1}$ given by the accelerated one
is larger than that given by the normal one. 
This result is consistent with Theorems~\ref{thm:c_rate_disc_flow} and~\ref{thm:c_rate_acc_disc_flow}. 
In addition, 
we can observe that the acceleration is more effective with smaller time steps. 

\begin{figure}[ht]
\centering
\begin{minipage}{0.3\linewidth}
\includegraphics[width=\linewidth]{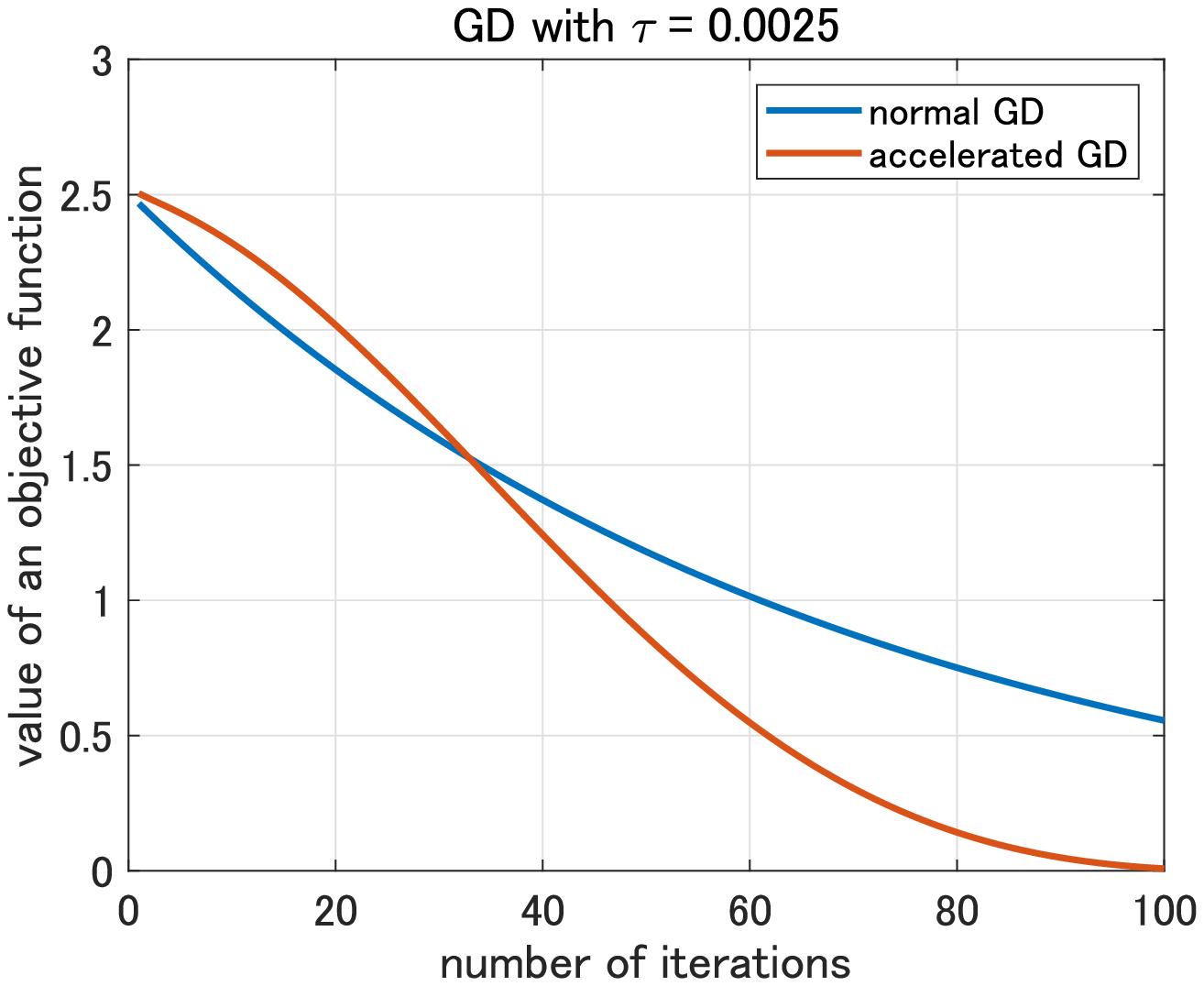}
\end{minipage} \ 
\begin{minipage}{0.3\linewidth}
\includegraphics[width=\linewidth]{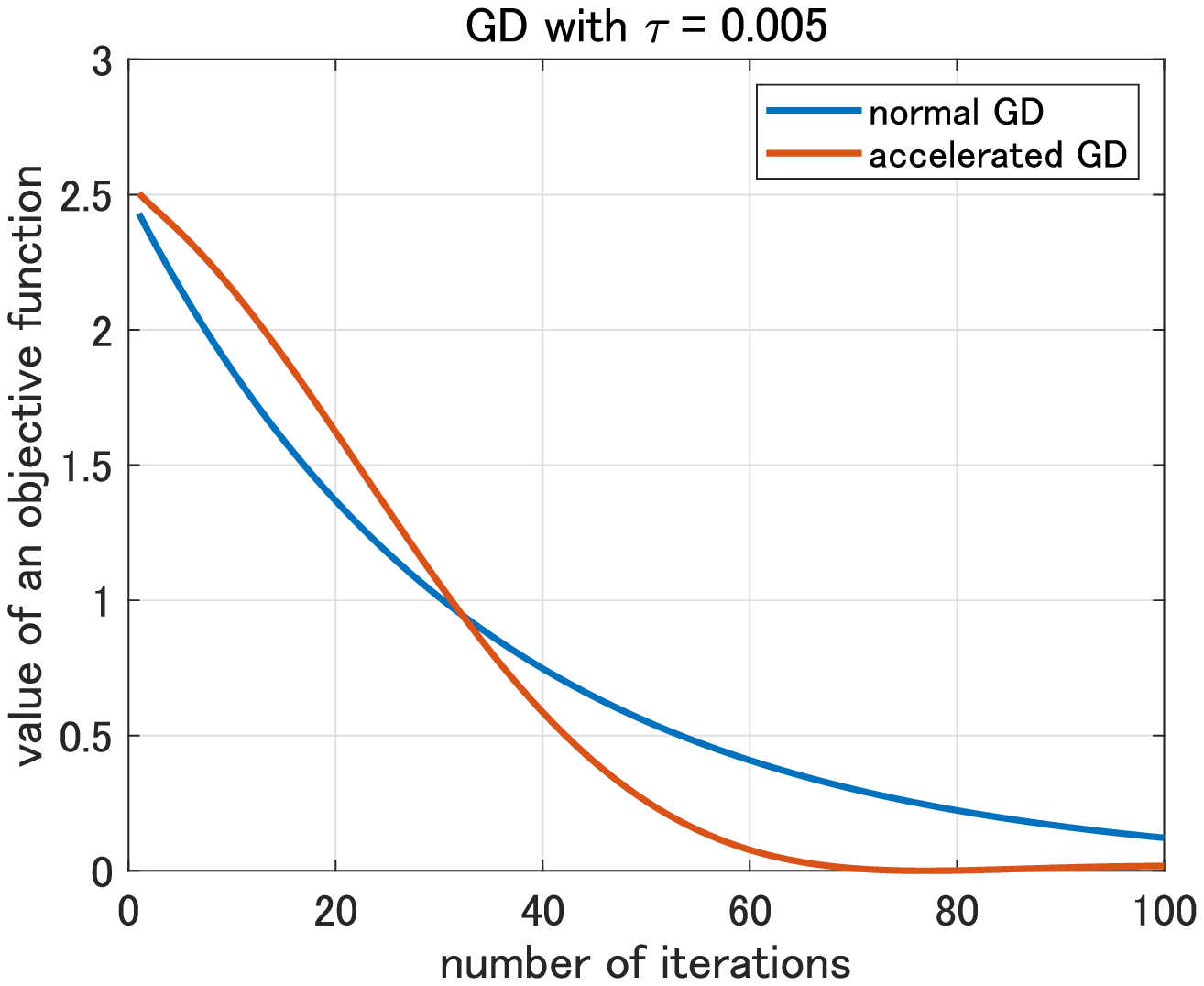}
\end{minipage} \ 
\begin{minipage}{0.3\linewidth}
\includegraphics[width=\linewidth]{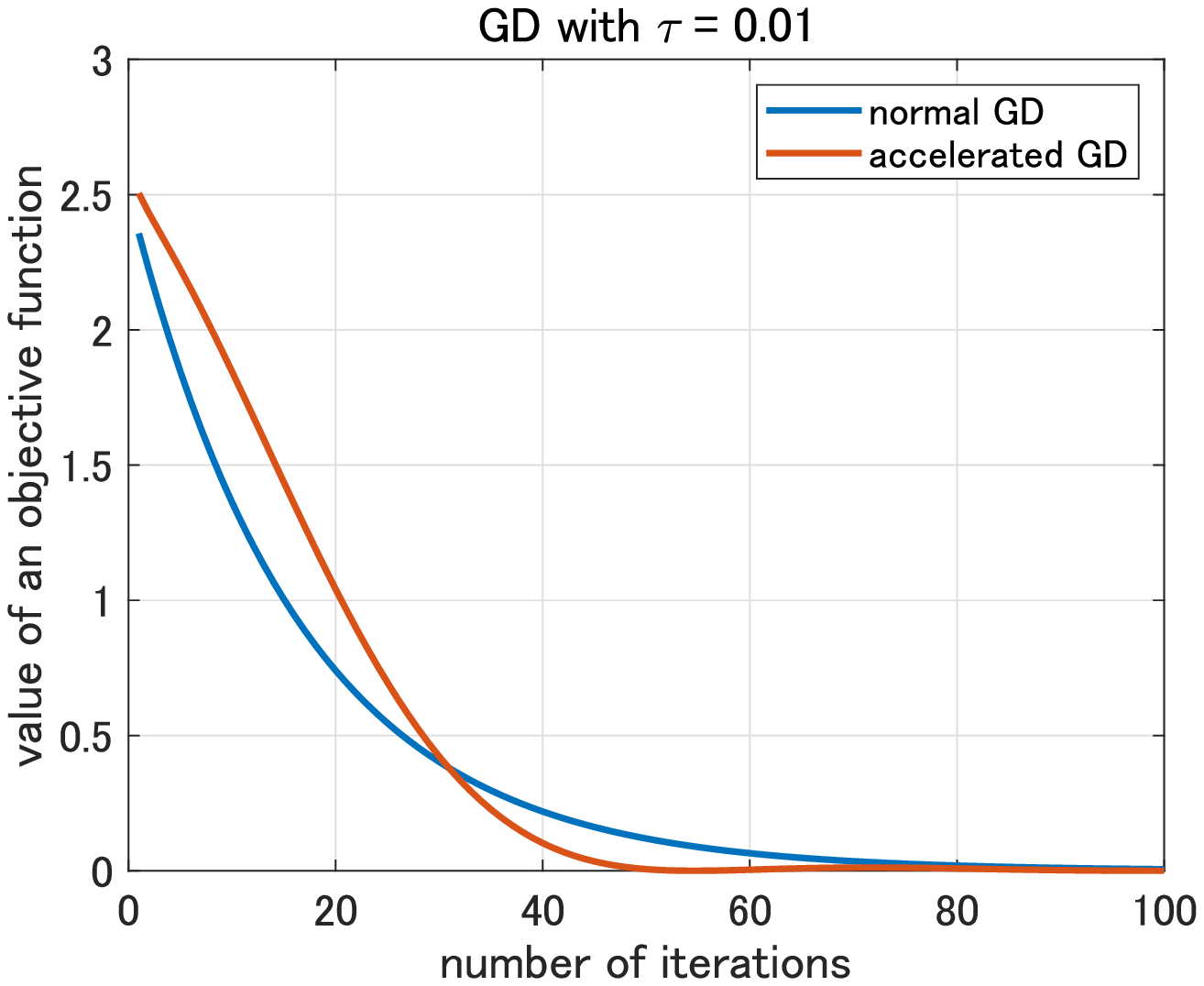}
\end{minipage}
\caption{Results for $\mathcal{F}_{1}$ in \eqref{eq:ex_for_num_comp_1} ($1$-D interaction energy plus a linear function).}
\label{fig:one_dim_inter}
\end{figure}

\subsection{Example 2}
\label{sec:num_ex_2_neg_ent}
Let $d = 2$ and 
let $\mathcal{F}_{2}$ be defined by
\begin{align}
\mathcal{F}_{2}(\mu)
:=
\kappa \, \mathcal{U}_{2}(\mu) + \mathcal{V}_{2}(\mu),
\label{eq:ex_for_num_comp_2}
\end{align}
where 
$\kappa$ is a positive real number, 
$\mathcal{U}_{2}$ is the negative entropy given by the function $U(x) = x \log x$,  
and 
\begin{align}
& \mathcal{V}_{2}(\mu) 
:= 
\int_{\mathbf{R}^{2}} \frac{1}{2} \| x \|^{2} \, \mathrm{d}\mu(x). 
\notag
\end{align}
The functional $\mathcal{V}_{2}$ is defined by $V(x) = \| x \|^{2}/2$. 
By Theorems~\ref{thm:conv_of_U}--\ref{thm:conv_of_W}, 
these functionals are 
{\newconvex}. 
Furthermore, 
$\mathcal{V}_{2}$ is 
{\newsmooth} with $L = 2$ 
by Theorem~\ref{thm:func_pot_L_smooth}. 
However, 
as stated before Theorem~\ref{thm:func_U_restricted_L_smooth}, 
we do not know whether $\mathcal{U}_{2}$ is 
{\newsmooth} or not. 
Therefore we just observe what happens in this experiment in the case of $\mathcal{F}_{2}$. 
It is known that the minimizer of $\mathcal{F}_{2}$ is the measure of the normal distribution 
\cite[Remark 9.12 and Figure 9.8]{peyre2019computational}. 
We use 
\begin{align}
\kappa = 0.05, 0.1
\label{eq:used_kappa}
\end{align}
for this experiment. 
In addition, 
we need to compute the negative entropy $\mathcal{U}_{2}(\mu)$ for empirical measures $\mu$ for this experiment
although this value is $+\infty$ according to the definition in~\eqref{eq:def_U_int_ene}.
To this end, 
we use the well-known $k$-nearest neighbor method \cite[Remark 9.12]{peyre2019computational}
to approximate the negative entropy by a point cloud. 
Therefore we do not compute 
the complicated gradient $\mathcal{G}_{\mathcal{U}_{2}}$ given by \eqref{eq:def_grad_for_U_internal}
in this experiment. 

We show results for $\mathcal{F}_{2}$ in Figure~\ref{fig:two_dim_neg_ent}. 
We can observe a similar phenomenon 
that the accelerated algorithm outperforms the normal one after some beginning steps. 
In addition, 
smaller time steps are preferable in that the final states are stable. 

\begin{figure}[ht]
\centering
\begin{minipage}{0.3\linewidth}
\includegraphics[width=\linewidth]{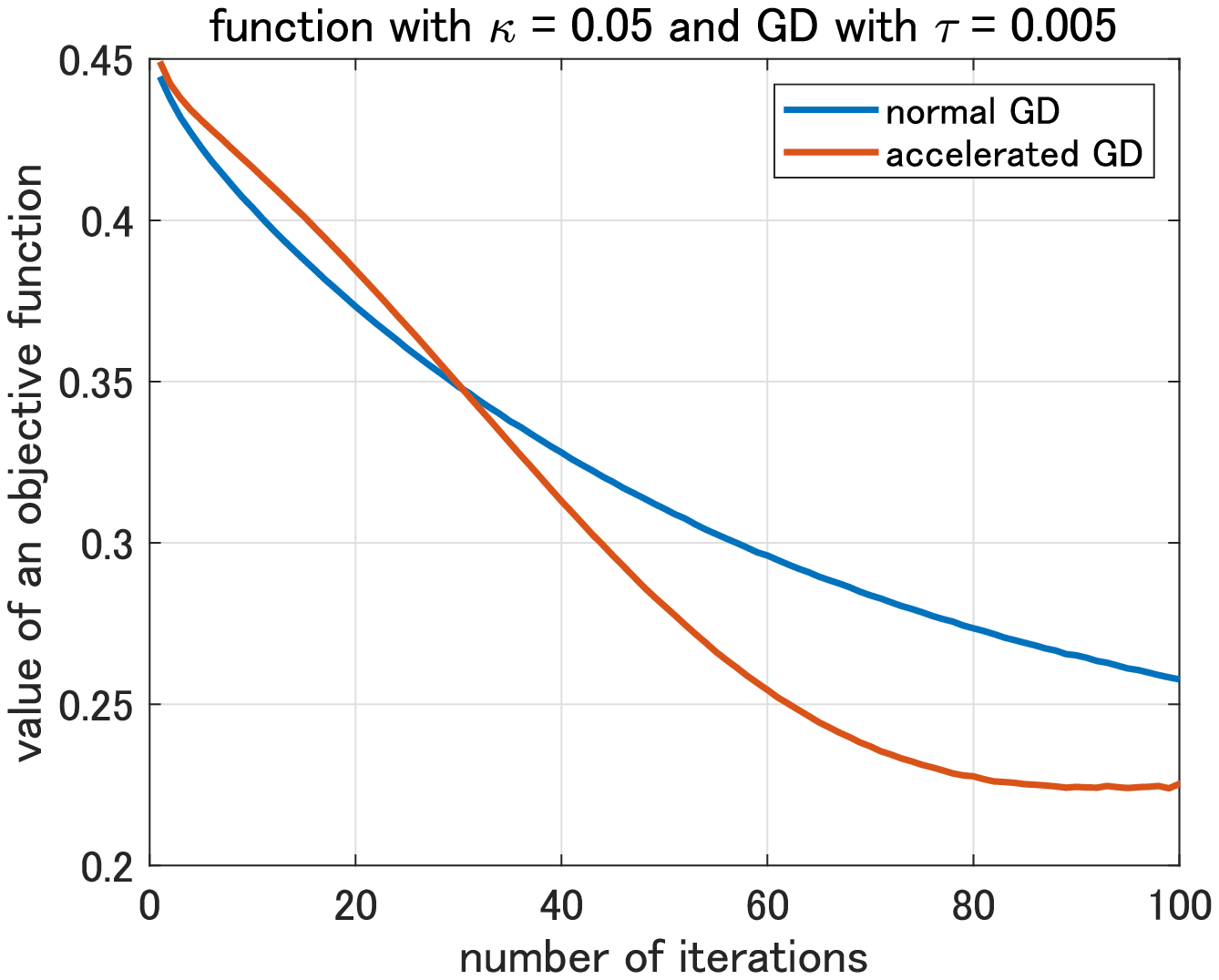}
\end{minipage} \ 
\begin{minipage}{0.3\linewidth}
\includegraphics[width=\linewidth]{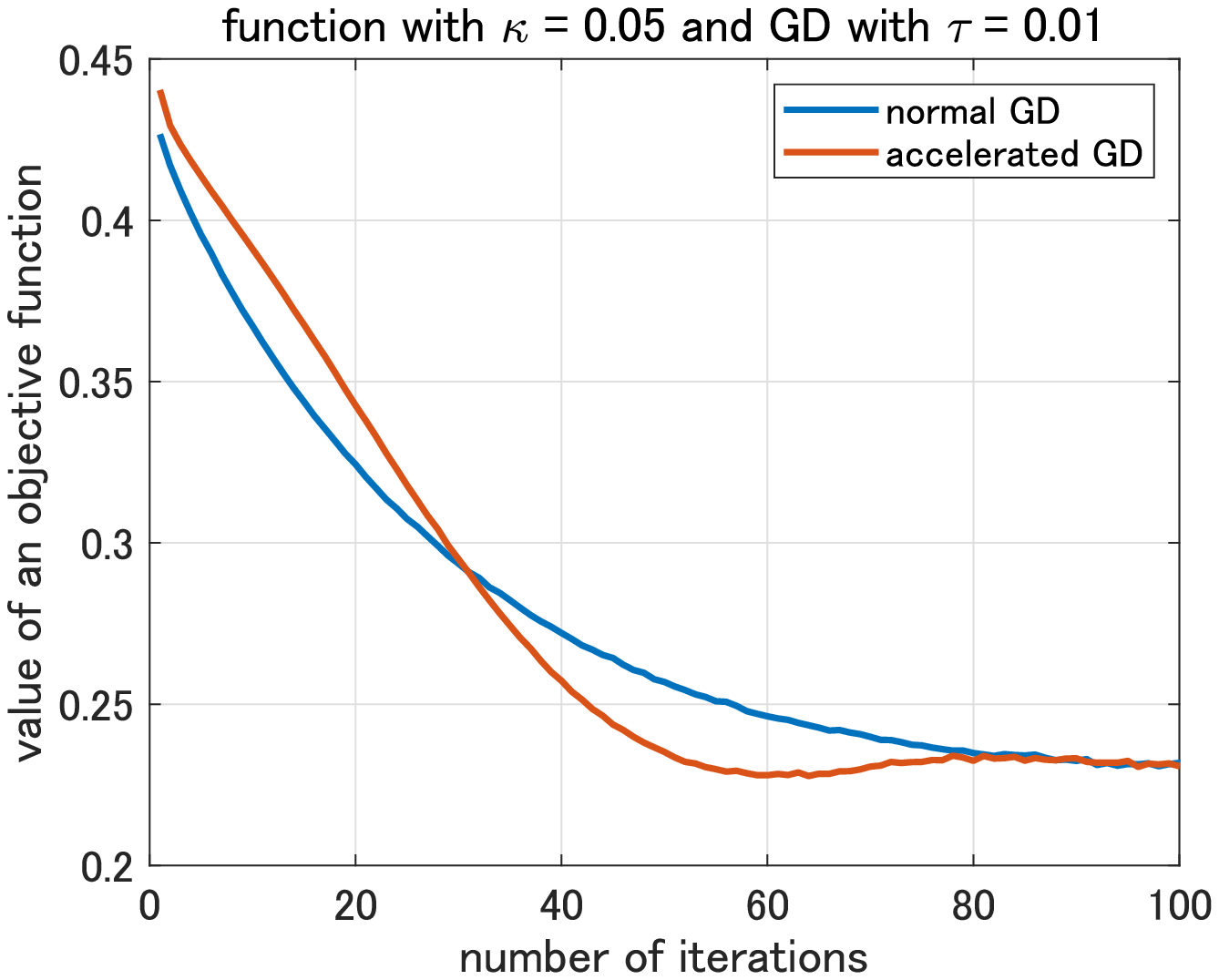}
\end{minipage} \ 
\begin{minipage}{0.3\linewidth}
\includegraphics[width=\linewidth]{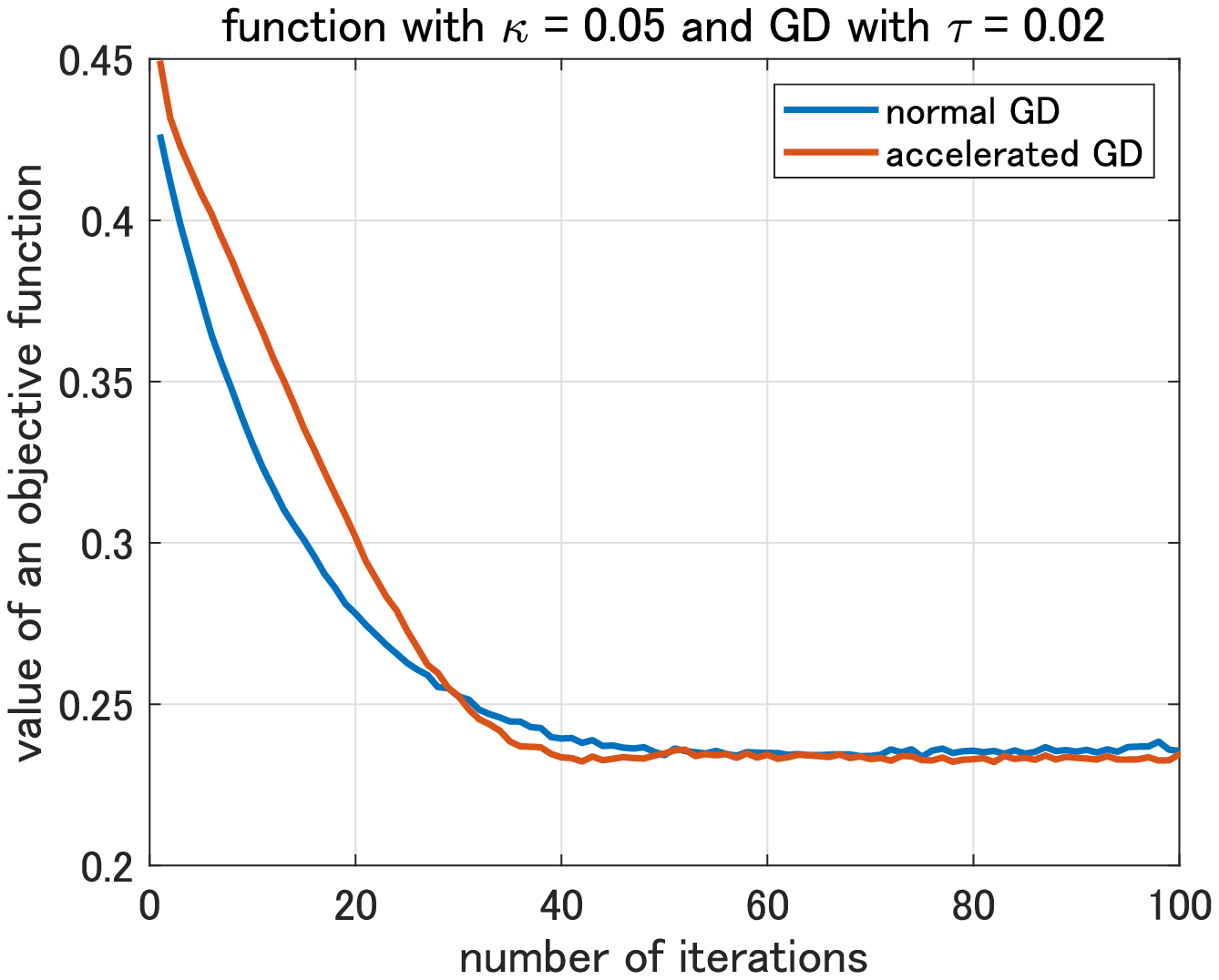}
\end{minipage}

\smallskip

\begin{minipage}{0.3\linewidth}
\includegraphics[width=\linewidth]{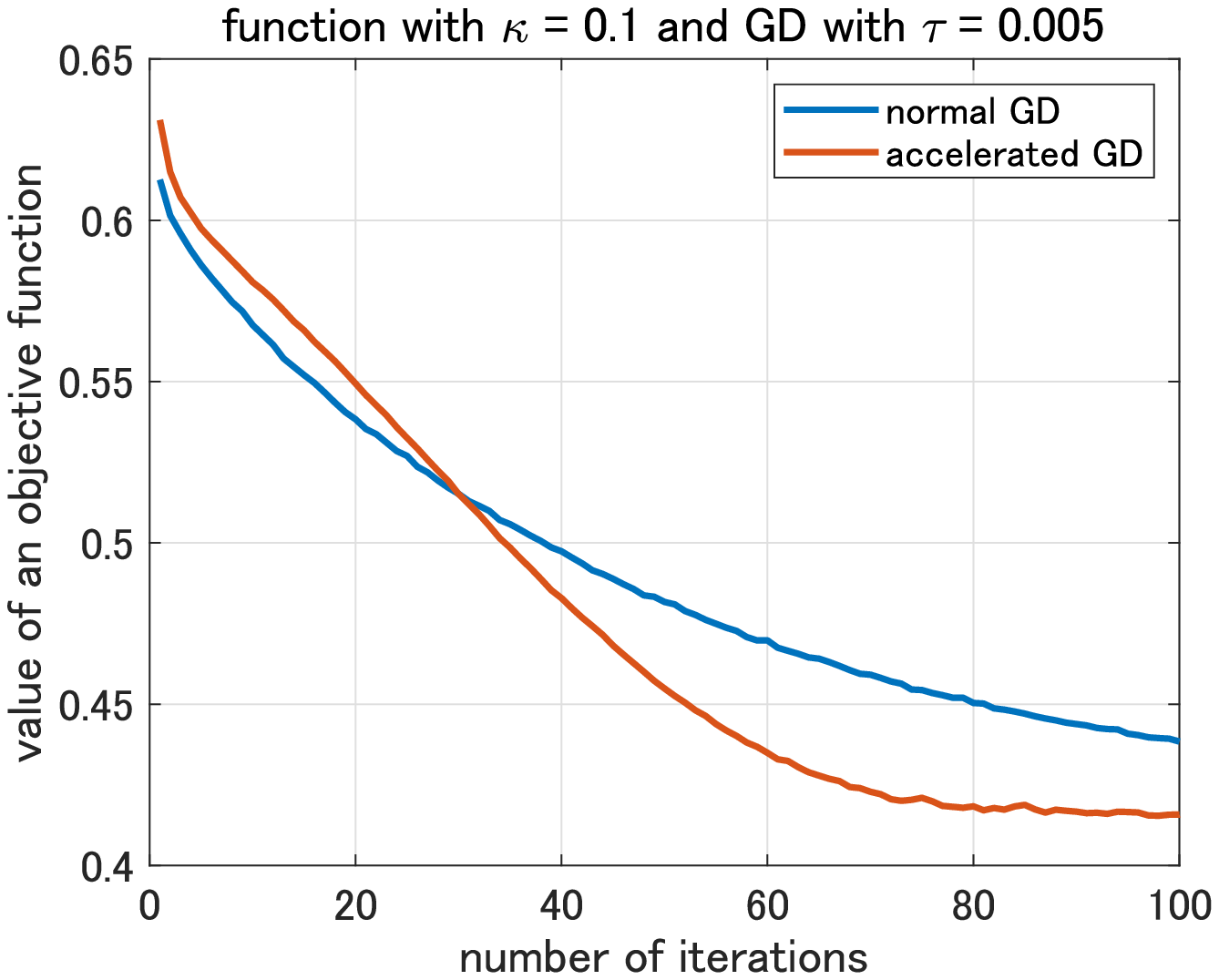}
\end{minipage} \ 
\begin{minipage}{0.3\linewidth}
\includegraphics[width=\linewidth]{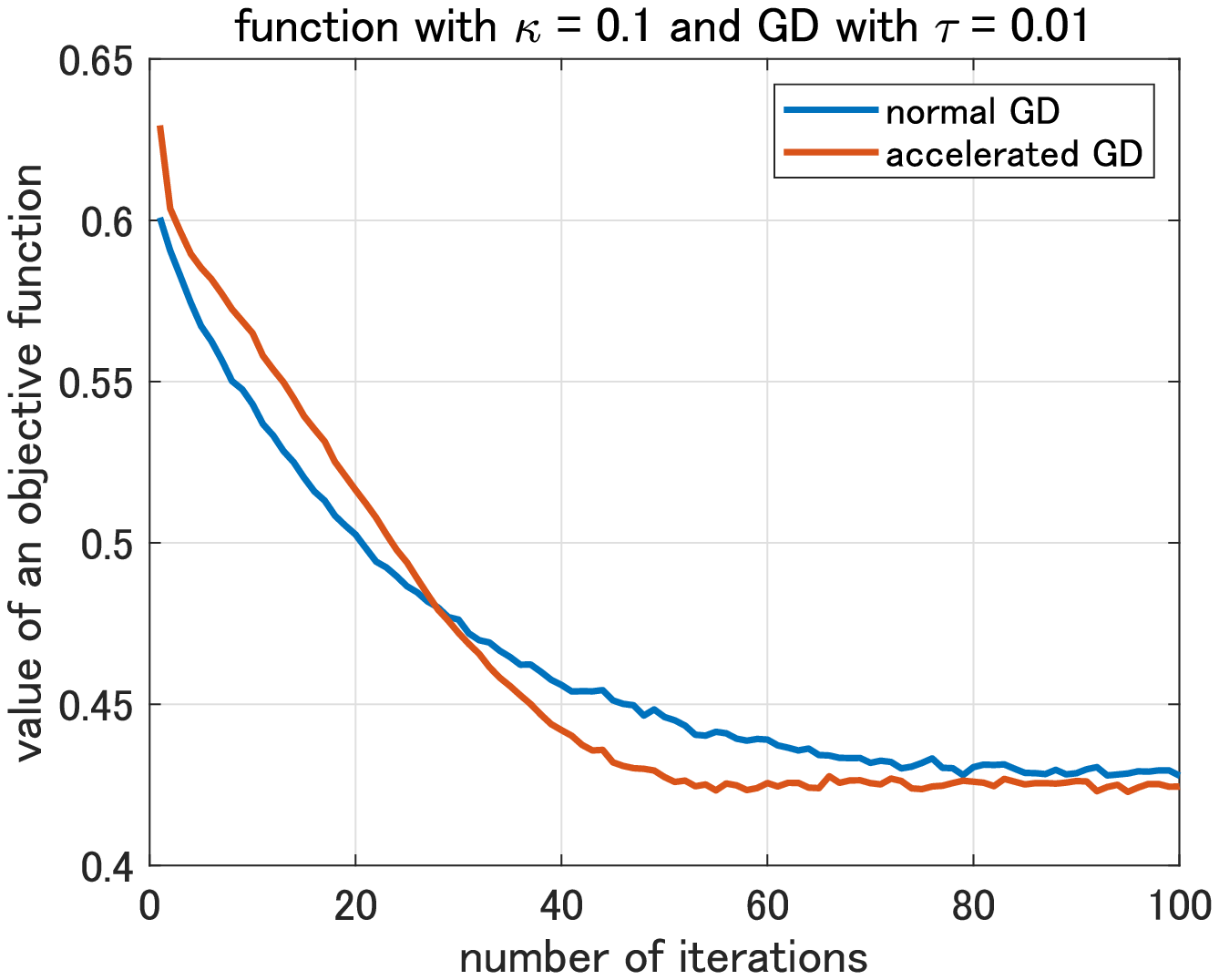}
\end{minipage} \ 
\begin{minipage}{0.3\linewidth}
\includegraphics[width=\linewidth]{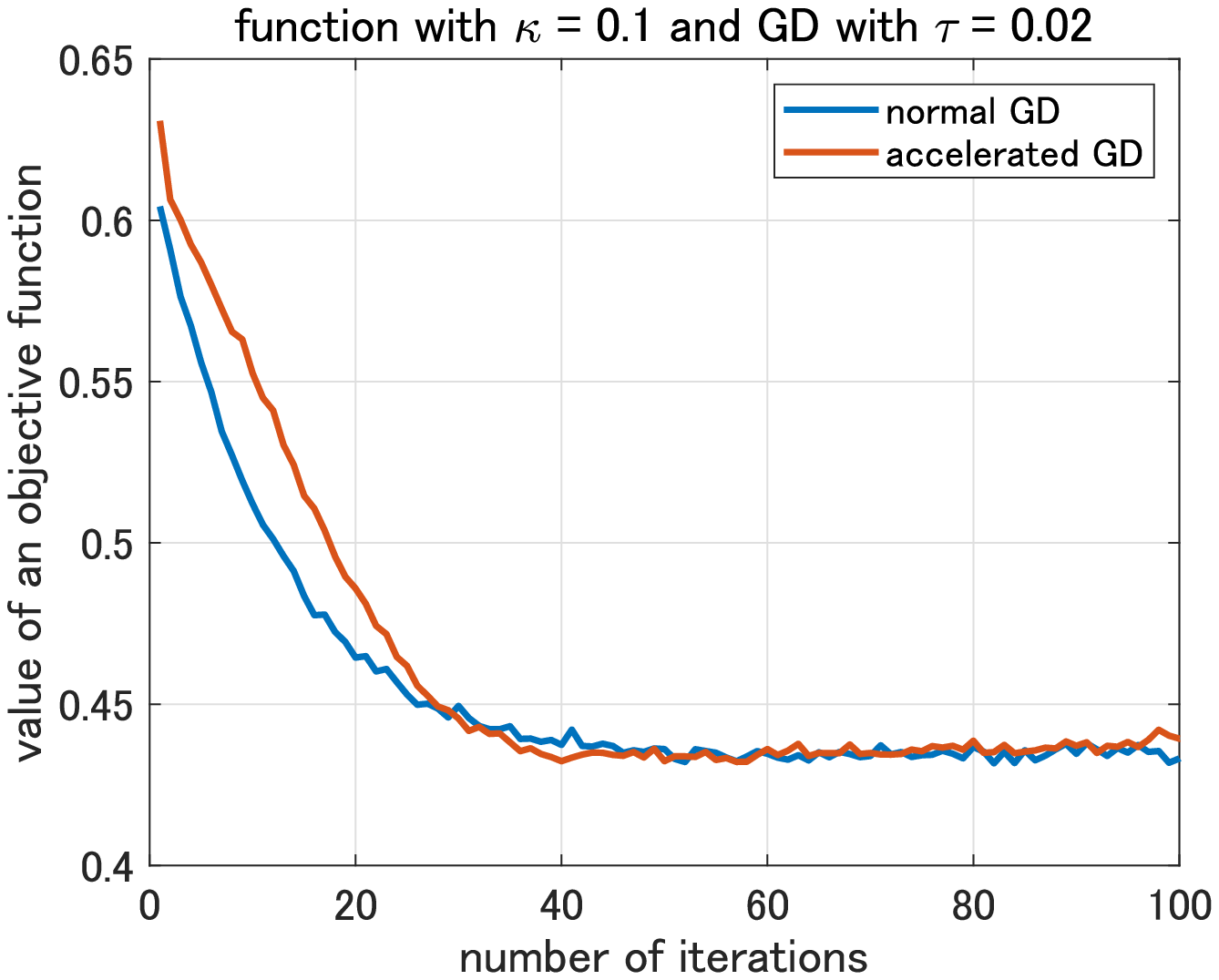}
\end{minipage}
\caption{Results for  $\mathcal{F}_{2}$ in \eqref{eq:ex_for_num_comp_2} ($2$-D negative entropy plus a linear function). }
\label{fig:two_dim_neg_ent}
\end{figure}

\section{Concluding remarks and topics for future work}
\label{sec:concl}

The gradient descent algorithm and its acceleration for functionals of probability measures 
are based on transport maps that push an initial measure forward to an optimal one. 
Those algorithms are derived from the discretization of the gradient flow equation and 
its accelerated version,
which is realized by inheriting the momentum technique 
in Nesterov's method for optimization in the Euclidean spaces. 
This inheritance is easily enabled 
because we consider the acceleration based on transport maps 
without using the Wasserstein structure of the measure space $\mathcal{P}_{2}(\mathbf{R}^{d})$. 

There are several topics for future work. 
First, 
the theory for the functional $\mathcal{U}$ is far from being complete. 
We need such a theory for explaining the result of the numerical experiments in Section~\ref{sec:num_ex_2_neg_ent}.
Next, 
we should consider ``strong convexity'' for functionals of probability measures. 
Typical functionals will have that convexity and it will contribute to improving the algorithms. 
Next, 
extending the proposed framework to a wider class of functionals is an interesting topic. 
Functionals of repulsive interaction energy are worthy to be considered.
Finally, 
we have not analyzed the fully (time-space) discretized versions of the flows considered in Sections~\ref{sec:grad_flow}. 
Such analysis is necessary for direct explanation of the real performance of the proposed algorithms. 

%
%
%
%
%
%
%

\section*{Acknowledgement}

The author thanks Prof.~Wuchen Li for several references on acceleration methods.
This work was supported by JST, PRESTO Grant Number JPMJPR2023, Japan. 

\bibliographystyle{plain}
\bibliography{opt_func_meas_grad}

\appendix
\section{Proofs}

\subsection{Proofs about gradients}
\label{sec:proofs_gradient}
\begin{proof}[Proof of Proposition~\ref{thm:grad_U}]
Let 
$\mu_{0} = \rho_{0} \mathcal{L}^{d}$
be an absolutely continuous measure with a compact support and 
let
$X(t,x)$
be a flow map whose Jacobian
$\nabla_{x} X(t,x)$
has a positive determinant for any $t$ and $x$. 
Because the density $\rho_{t}$ of the measure $\mu_{t} = X(t, \cdot)_{\#} \mu_{0}$ satisfies
\begin{align}
\rho_{t}(X(t,x)) = \frac{\rho_{0}(x)}{|\nabla_{x} X(t,x)|}, 
\notag
\end{align}
the value $\mathcal{U}(\mu_{t})$ is given by
\begin{align}
\mathcal{U}(\mu_{t})
=
\int U(\rho_{t}(y)) \, \mathrm{d}y
=
\int U\left( \frac{\rho_{0}(x)}{|\nabla_{x} X(t,x)|} \right) |\nabla_{x} X(t,x)| \, \mathrm{d}x. 
\label{eq:expr_func_U}
\end{align}
By differentiating this expression%
\footnote{
For differentiation of determinants, see \cite[Thm.~8.1]{magnus2019matrix}.}, 
we have
\begin{align}
\frac{\mathrm{d}}{\mathrm{d}t} \mathcal{U}(\mu_{t})
& = 
\int 
\left[
- U'\left( \frac{\rho_{0}(x)}{|\nabla_{x} X(t,x)|} \right) \frac{\rho_{0}(x)}{|\nabla_{x} X(t,x)|^{2}} |\nabla_{x} X(t,x)| 
+ U\left( \frac{\rho_{0}(x)}{|\nabla_{x} X(t,x)|} \right) 
\right]
\notag \\
& 
\phantom{= \int} \
\cdot |\nabla_{x} X(t,x)| \mathop{\mathrm{tr}} \left( \nabla_{x}X(t,x)^{-1} \, \frac{\partial}{\partial t} \nabla_{x} X(t,x) \right)
\, \mathrm{d}x
\notag \\
& = 
\int 
\mathop{\mathrm{tr}} \left( G[\rho_{0}, X(t,\cdot)](x) \, \nabla_{x}X(t,x)^{-1} \, \frac{\partial}{\partial t} \nabla_{x} X(t,x) \right)
\, \mathrm{d}x, 
\label{eq:diff_U_mu_t}
\end{align}
where $G[\cdot, \cdot](\cdot)$ is the scalar function defined by
\begin{align}
G[\eta, Q](x) 
:= 
- U'\left( \frac{\eta(x)}{|\nabla_{x} Q(x)|} \right) \eta(x) 
+ U\left( \frac{\eta(x)}{|\nabla_{x} Q(x)|} \right) |\nabla_{x} Q(x)|
\label{eq:def_G_eta_Q}
\end{align}
for a density function $\eta$ and a map $Q: \mathbf{R}^{d} \to \mathbf{R}^{d}$. 
By integration by parts, 
we deduce from \eqref{eq:diff_U_mu_t} that
\begin{align}
\frac{\mathrm{d}}{\mathrm{d}t} \mathcal{U}(\mu_{t})
& =
- \int 
\left \langle \mathrm{div}_{x} \left( G[\rho_{0}, X(t,\cdot)](x) \, \nabla_{x}X(t,x)^{-1} \right), \, \frac{\partial}{\partial t} X(t,x) \right \rangle
\, \mathrm{d}x
\notag \\
& = 
\int 
\left \langle - \frac{1}{\rho_{0}(x)} \, \mathrm{div}_{x} \left( G[\rho_{0}, X(t,\cdot)](x) \, \nabla_{x}X(t,x)^{-1} \right), \, \frac{\partial}{\partial t} X(t,x) \right \rangle
\, \mathrm{d}\mu_{0}(x),
\label{eq:func_U_int_by_parts}
\end{align}
where the first equality is based on the assumptions for $U$
and 
the fact that the support of $\rho_{0}$ is compact.
Therefore we obtain the gradient in~\eqref{eq:def_grad_for_U_internal}
by replacing $\mu_{0}$, $\rho_{0}$, and $X(t, \cdot)$ in
\[
- \frac{1}{\rho_{0}(x)} \, \mathrm{div}_{x} \left( G[\rho_{0}, X(t,\cdot)](x) \, \nabla_{x}X(t,x)^{-1} \right)
\]
with $\mu$, $\rho$, and $F$, respectively. 
\end{proof}

\begin{proof}[Proof of Proposition~\ref{thm:grad_V}]
The assertion follows from
\begin{align}
\frac{\mathrm{d}}{\mathrm{d}t}\mathcal{V}(\mu_{t})
& = 
\int  
\left \langle \nabla V(X(t,x)), \frac{\partial}{\partial t} X(t,x) \right \rangle
\mathrm{d}\mu_{0}(x).
\notag
\end{align}
\end{proof}

\begin{proof}[Proof of Proposition~\ref{thm:grad_W}]
The derivative of $W(X(t,x), X(t,y))$ with respect to $t$ is given by
\begin{align}
& \frac{\partial}{\partial t}
W(X(t,x), X(t,y))
\notag \\
& =  
\left \langle \nabla_{1} W(X(t,x), X(t,y)), \frac{\partial}{\partial t} X(t,x) \right \rangle
+
\left \langle \nabla_{2} W(X(t,x), X(t,y)), \frac{\partial}{\partial t} X(t,y) \right \rangle
\notag \\
& = 
\left \langle \nabla_{1} W(X(t,x), X(t,y)), \frac{\partial}{\partial t} X(t,x) \right \rangle
+
\left \langle \nabla_{1} W(X(t,y), X(t,x)), \frac{\partial}{\partial t} X(t,y) \right \rangle, 
\label{eq:diff_F_mu_t}
\end{align}
where the second equality is owing to the relation $\nabla_{1}W(x,y) = \nabla_{2}W(y,x)$, 
that is derived from the symmetry of $W$. 
By integrating both sides of \eqref{eq:diff_F_mu_t} 
by $\mathrm{d}\mu_{0}(x)$ and $\mathrm{d}\mu_{0}(y)$, 
we have
\begin{align}
\frac{\mathrm{d}}{\mathrm{d}t} \mathcal{F}(\mu_{t})
& =
\int 
\left \langle \int \nabla_{1} W(X(t,x), X(t,y)) \, \mathrm{d}\mu_{0}(y), \frac{\partial}{\partial t} X(t,x) \right \rangle
\mathrm{d}\mu_{0}(x)
\notag \\
& =
\int 
\left \langle \int \nabla_{1} W(X(t,x), y) \, \mathrm{d}\mu_{t}(y), \frac{\partial}{\partial t} X(t,x) \right \rangle
\mathrm{d}\mu_{0}(x), 
\notag
\end{align}
which implies the assertion. 
\end{proof}

\subsection{Proofs about convexity and smoothness}
\label{sec:proofs_convex_smooth}
\subsubsection{Proofs about convexity}

\begin{proof}[Proof of Theorem~\ref{thm:conv_of_U}]
Let $\mu$ be an absolutely continuous probability measure whose support is compact.
Let $\rho$ be the density of $\mu$. 
Furthermore, 
let $S$ and $T$ be maps from $\mathbf{R}^{d}$ to $\mathbf{R}^{d}$ 
whose Jacobians $\nabla S(x)$ and $\nabla T(x)$ are positive definite for any $x$. 
For $t \in [0,1]$, we define $R(t, x)$ by
\begin{align}
R(t, x) := (1-t) S(x) + t \, T(x).
\label{eq:def_R_by_S_and_T}
\end{align}

If we show that 
for any $x$ with $\rho(x) > 0$
the function defined by
\begin{align}
r(t,x) := U\left( \frac{\rho(x)}{|\nabla_{x} R(t,x)|} \right) |\nabla_{x} R(t,x)|
\label{eq:def_func_r_by_U_rho_R}
\end{align}
is convex as a function of $t \in [0,1]$,  
we have the inequality
\begin{align}
r(1,x) - r(0,x) \geq r_{t}(0,x), 
\notag 
\end{align}
where $r_{t}$ denotes the partial derivative of $r$ with respect to $t$.
By using the same manner as the calculation in \eqref{eq:diff_U_mu_t} for $r_{t}(0,x)$, 
we deduce from the above inequality that 
\begin{align}
& U\left( \frac{\rho(x)}{|\nabla T(x)|} \right) |\nabla T(x)|
- 
U\left( \frac{\rho(x)}{|\nabla S(x)|} \right) |\nabla S(x)|
\notag \\
& \geq
\mathop{\mathrm{tr}} \left( G[\rho, R(0, \cdot)](x)  \, \nabla_{x}R(0,x)^{-1} \, \frac{\partial}{\partial t} \nabla_{x} R(t,x) \Big|_{t=0} \right)
\notag \\
& =
\mathop{\mathrm{tr}} \left( G[\rho, S](x) \, \nabla S(x)^{-1} \, \nabla (T(x) - S(x)) \right), 
\label{eq:func_U_geq_grad_by_trace}
\end{align}
where $G[\cdot, \cdot](\cdot)$ is defined by \eqref{eq:def_G_eta_Q}. 
By integrating both sides of the above inequality with respect to $\mathrm{d}x$, 
in the same manner as \eqref{eq:func_U_int_by_parts}
we have
\begin{align}
\mathcal{U}(T_{\#} \mu) - \mathcal{U}(S_{\#} \mu)
& \geq 
\int 
\left \langle - \frac{1}{\rho(x)} \, \mathrm{div} \left( G[\rho, S](x) \, \nabla S(x)^{-1} \right), \, T(x) - S(x) \right \rangle
\, \mathrm{d}\mu(x)
\notag \\
& = 
\int 
\left \langle \mathcal{G}_{\mathcal{U}}[S_{\#}\mu](S(x)), \, T(x) - S(x) \right \rangle
\, \mathrm{d}\mu(x), 
\label{eq:func_U_convex}
\end{align}
which is the conclusion. 

In the following, 
we prove the convexity of the function $r(\cdot, x)$ given by \eqref{eq:def_func_r_by_U_rho_R}. 
To this end, 
we regard $r(\cdot, x)$ as the composition $\alpha \circ \beta$ of the functions $\alpha$ and $\beta$ defined by
\begin{align}
& \alpha(s) := U\left( \rho(x) \, s^{-d} \right) \, s^{d}
\qquad
\left( = \tilde{U}_{d}\left( s/\rho(x)^{1/d} \right) \rho(x) \right)
\qquad \text{and}
\notag \\
& \beta(t) := |\nabla_{x} R(t,x)|^{1/d}
\qquad 
\left( = \left| (1-t) \nabla S(x) + t\, \nabla T(x) \right|^{1/d} \right), 
\notag
\end{align}
respectively. 
Because of the assumption about $\tilde{U}_{d}$ in \eqref{eq:def_tilde_U}, 
the function $\alpha$ is convex and non-increasing. 
Moreover, 
the function $\beta$ is concave. 
To show this, 
let $t_{0}$ and $t_{1}$ be real numbers in $[0,1]$ with $t_{0} < t_{1}$ and 
let $t_{\lambda} := (1 - \lambda) t_{0} + \lambda t_{1}$ for $\lambda \in (0,1)$. 
Then, we can show the concavity of $\beta$ as follows:
\begin{align}
\beta(t_{\lambda})
& =
\left| \nabla_{x} R(t_{\lambda},x) \right|^{1/d}
\notag \\
& = 
\left| (1-\lambda) \nabla_{x} R(t_{0},x) + \lambda \nabla_{x} R(t_{1},x) \right|^{1/d}
\notag \\
& \geq
\left| (1-\lambda) \nabla_{x} R(t_{0},x) \right|^{1/d}
+
\left| \lambda \nabla_{x} R(t_{1},x) \right|^{1/d}
\notag \\
& = 
(1-\lambda) \left| \nabla_{x} R(t_{0},x) \right|^{1/d}
+
\lambda \, \left| \nabla_{x} R(t_{1},x) \right|^{1/d}
\notag \\
& = 
(1-\lambda) \beta(t_{0}) + \lambda \beta(t_{1}), 
\notag
\end{align}
where the inequality is owing to the Minkowski inequality for matrices%
\footnote{For $d \times d$ positive semi-definite matrices $A$ and $B$, 
\[
|A + B|^{1/d} \geq |A|^{1/d} + |B|^{1/d}
\]
holds. 
See \cite[Lem.~15.15]{ambrosio2021lectures}, \cite[Exercise 12.13]{abadir_magnus_2005}, or \cite[Thm.~11.28]{magnus2019matrix}.}.
Therefore $r(\cdot, x) = \alpha \circ \beta$ is convex because
\begin{align}
\alpha(\beta(t_{\lambda}))
\leq
\alpha((1-\lambda) \beta(t_{0}) + \lambda \beta(t_{1}))
\leq
(1-\lambda) \, \alpha(\beta(t_{0}))
+ 
\lambda \, \alpha(\beta(t_{1})).
\notag
\end{align}
Thus Theorem~\ref{thm:conv_of_U} is proven. 
\end{proof}

\begin{proof}[Proof of Theorem~\ref{thm:conv_of_V}]
By the convexity of $V$, we have
\begin{align}
V(T(x)) - V(S(x))
\geq
\left \langle
\nabla V(S(x)), \, T(x) - S(x) 
\right \rangle.
\notag
\end{align}
By integrating both sides of the above inequality with respect to $\mathrm{d}\mu(x)$, 
we can show the {\newconvexity} of $\mathcal{V}$. 
\end{proof}

\begin{proof}[Proof of Theorem~\ref{thm:conv_of_W}]
Let $S, T \in L^{2}(\mathbf{R}^{d} \to \mathbf{R}^{d})$ and $\mu \in \mathcal{P}_{2}(\mathbf{R}^{d})$. 
Then, 
by the convexity of $\tilde{W}$, we have
\begin{align}
& \tilde{W}(T(x) - T(y)) - \tilde{W}(S(x) - S(y)) 
\notag \\
& \geq
\left \langle
\nabla \tilde{W}(S(x) - S(y)), \, T(x) - T(y) - (S(x) - S(y))
\right \rangle
\notag \\
& = 
\left \langle
\nabla \tilde{W}(S(x) - S(y)), \, T(x) - S(x)
\right \rangle
+
\left \langle
- \nabla \tilde{W}(S(x) - S(y)), \, T(y) - S(y)
\right \rangle
\notag \\
& = 
\left \langle
\nabla \tilde{W}(S(x) - S(y)), \, T(x) - S(x)
\right \rangle
+
\left \langle
\nabla \tilde{W}(S(y) - S(x)), \, T(y) - S(y)
\right \rangle, 
\notag
\end{align}
where the last equality is owing to $-\nabla \tilde{W}(x) = \nabla \tilde{W}(-x)$. 
By integrating both sides of the above inequality with respect to $\mathrm{d}\mu(x)$ and $\mathrm{d}\mu(y)$, 
we have
\begin{align}
2 \mathcal{W}(T_{\#} \mu) - 2 \mathcal{W}(S_{\#} \mu) 
& \geq 
2
\int  
\left \langle
\int  \nabla \tilde{W}(S(x) - S(y)) \, \mathrm{d}\mu(y), \, T(x) - S(x)
\right \rangle
\mathrm{d}\mu(x)
\notag \\
& = 
2
\int 
\left \langle
\mathcal{G}_{\mathcal{W}}[S_{\#} \mu](S(x)), \, T(x) - S(x)
\right \rangle
\mathrm{d}\mu(x). 
\notag 
\end{align}
This inequality indicates the {\newconvexity} of $\mathcal{W}$. 
\end{proof}

\subsubsection{Proofs about smoothness}
\label{sec:proofs_smooth}

We consider the following assumptions for 
Theorem~\ref{thm:func_U_restricted_L_smooth}
about the functional $\mathcal{U}$. 
In these assumptions, 
$\rho$ is the density of $\mu \in \mathcal{P}_{2}(\mathbf{R}^{d})$, 
$U: [0, \infty) \to (-\infty, \infty]$
is the function with~\eqref{eq:diff_U_assump} and~\eqref{eq:diff_U_assump_2nd} defining $\mathcal{U}$, 
and
$S$ and $T$ are maps from $\mathbf{R}^{d}$ to $\mathbf{R}^{d}$ 
whose Jacobians $\nabla S(x)$ and $\nabla T(x)$ are positive definite for any $x$. 
Recall that the function $G[\cdot, \cdot](\cdot)$ is defined by~\eqref{eq:def_G_eta_Q}. 
In addition, 
for the map $R(t,x)$ defined by~\eqref{eq:def_R_by_S_and_T}, 
let $A(t,x)$ be the matrix defined by
\begin{align}
A(t,x) := \nabla_{x}R(t,x)^{-1} ( \nabla T(x) - \nabla S(x) ).
\label{eq:def_mat_A}
\end{align}
for $t \in [0,1]$.

\begin{assump}
\label{assump:bounded_C1_C2}
There exist real constants $C_{1}$ and $C_{2}$ such that
\begin{align}
& \left| 
\frac{G[\rho, R(t, \cdot)](x)}{\rho(x)}
\right| 
\leq C_{1} \quad \text{and} 
\notag \\
& \left|
U''\left(
\frac{\rho(x)}{|\nabla_{x} R(t,x)|}
\right)
\frac{\rho(x)}{|\nabla_{x} R(t,x)|}
\right|
\leq C_{2}
\notag
\end{align}
hold for any $t \in [0,1]$ and $x \in \mathop{\mathrm{supp}} \rho$. 
\end{assump}

\begin{assump}
\label{assump:matrix_A_PD}
The matrix $A(t,x)$ in~\eqref{eq:def_mat_A} is positive definite for any $t \in [0,1]$ and $x \in \mathop{\mathrm{supp}} \rho$.
\end{assump}

\begin{assump}
\label{assump:bounded_C3_L2_L1}
Let $C_{3}$ be a positive real constant. 
The maps $S$ and $T$ belong to the class of maps such that 
the matrix $A(t,x)$ defined by~\eqref{eq:def_mat_A} with these maps satisfies 
\begin{align}
\int_{\mathop{\mathrm{supp}} \rho} 
\mathop{\mathrm{tr}} 
\left(
A(t,x)
\right)^{2}
\mathrm{d}x
\leq
C_{3}
\left( 
\int_{\mathop{\mathrm{supp}} \rho}
\mathop{\mathrm{tr}} 
\left(
A(t,x)
\right)
\mathrm{d}x
\right)^{2}.
\notag
\end{align}
\end{assump}

\begin{assump}
\label{assump:bounded_C4_div}
Let $C_{4}$ be a positive real constant. 
The maps $S$ and $T$ belongs to the class of maps such that 
$\mathop{\mathrm{supp}} S$ and $\mathop{\mathrm{supp}} T$ are contained in a set 
$\mathscr{S} \subset \mathop{\mathrm{supp}} \rho$
and
the matrix $R(t,x)$ defined by~\eqref{eq:def_R_by_S_and_T} with these maps satisfies 
\begin{align}
\int_{\mathscr{S}}
\frac{1}{\rho(x)}
\left\| \mathop{\mathrm{div}} \left( \sqrt{\rho(x)} \nabla_{x} R(t,x)^{-1} \right) \right\|^{2} 
\mathrm{d}x
\leq C_{4}.
\notag
\end{align}
\end{assump}

\begin{rem}
\label{rem:L_smooth_U_assump}
Assumptions~\ref{assump:bounded_C1_C2}--\ref{assump:bounded_C4_div}
restrict the class of the maps $S$ and $T$ in a complicated manner. 
Therefore Theorem~\ref{thm:func_U_restricted_L_smooth} under these assumptions cannot be used to prove
Theorems~\ref{thm:c_rate_disc_flow} and~\ref{thm:c_rate_acc_disc_flow} for the functional $\mathcal{F} = \mathcal{U}$. 
These proofs are left to future work. 

Below we roughly observe what $\rho$, $U$, $S$, and $T$ satisfy the assumptions.
First, 
we consider Assumption~\ref{assump:bounded_C1_C2} 
for the typical example $U(x) = x \log x$ treated in Remarks~\ref{rem:entropy} and~\ref{rem:KL_div}. 
Because $U'(x) = \log x + 1$ and $U''(x) = 1/x$, we have
\begin{align}
\frac{G[\rho, R(t, \cdot)](x)}{\rho(x)} = -1 
\quad \text{and} \quad
U''\left(
\frac{\rho(x)}{|\nabla_{x} R(t,x)|}
\right)
\frac{\rho(x)}{|\nabla_{x} R(t,x)|} = 1.
\notag
\end{align}
Therefore Assumption~\ref{assump:bounded_C1_C2} is satisfied. 
Next, 
it is clear that Assumption~\ref{assump:matrix_A_PD} is satisfied if $\nabla T(x) - \nabla S(x) \succ O$. 
Next, 
to consider Assumption~\ref{assump:bounded_C3_L2_L1}, 
we assume that 
$\nabla S(x) \approx J_{1}$ 
and 
$\nabla T(x) - \nabla S(x) \approx J_{2}$
for constant matrices $J_{1}$ and $J_{2}$ with $J_{2} \approx O$.  
Then, 
$\nabla_{x} R(t,x) \approx J_{1}$ and $A(t, x) \approx J_{1}^{-1} J_{2}$ hold. 
In addition, 
we assume that $\mathop{\mathrm{supp}} \rho$ is compact. 
Then, 
the inequality of Assumption~\ref{assump:bounded_C3_L2_L1} holds for 
$C_{3} \approx 1/\mathcal{L}^{d}(\mathop{\mathrm{supp}} \rho)$
because $\mathop{\mathrm{tr}} (A(t,x))$ is close to the constant $\mathop{\mathrm{tr}} (J_{1}^{-1} J_{2})$.
Finally, 
for Assumption~\ref{assump:bounded_C4_div}, 
we consider the same situation as is adopted for Assumption~\ref{assump:bounded_C3_L2_L1}. 
In addition, 
we assume that $\rho(x)$ is $\mathrm{C}^{1}$ and larger than a positive constant on $\mathscr{S}$. 
Then, 
Assumption~\ref{assump:bounded_C4_div} is satisfied. 
\end{rem}

\begin{proof}[Proof of Theorem~\ref{thm:func_U_restricted_L_smooth}]
By applying the Taylor theorem to the function $r(\cdot, x)$ given by \eqref{eq:def_func_r_by_U_rho_R}, 
we get 
\[
r(1,x) = r(0,x) + r_{t}(0,x) + \frac{r_{tt}(t_{\ast}, x)}{2}
\] 
for some $t_{\ast} \in [0,1]$, 
where $r_{tt}$ denotes the second-order partial derivative of $r$ with respect to $t$.
Recall that 
integration of $r(1,x)$, $r(0,x)$, and $r_{t}(0,x)$ with respect to $\mathrm{d}x$ provides 
$\mathcal{U}(T_{\#} \mu)$, 
$\mathcal{U}(S_{\#} \mu)$, and
the RHS of \eqref{eq:func_U_convex}, respectively. 
Therefore
the {\newsmoothness} of $\mathcal{U}$ in Section~\ref{sec:func_internal_ene} is proved
if  
\begin{align}
\sup_{t \in [0,1]} \left| \int r_{tt}(t,x) \, \mathrm{d}x \right| \leq L \int \| T(x) - S(x) \|^{2} \, \rho(x) \, \mathrm{d}x
\label{eq:func_U_deg2_term_ub}
\end{align}
holds for $L \geq 0$.

To show inequality~\eqref{eq:func_U_deg2_term_ub}, 
we derive the expression of $r_{tt}(t,x)$. 
By replacing $X(t,x)$ in~\eqref{eq:diff_U_mu_t} with $R(t,x)$ in~\eqref{eq:def_R_by_S_and_T}, 
we have
\begin{align}
r_{t}(t,x)
= 
\mathop{\mathrm{tr}} \left( G[\rho, R(t,\cdot)](x) \, \nabla_{x}R(t,x)^{-1} \, (\nabla T(x) - \nabla S(x)) \right),
\notag
\end{align}
where $G[\cdot, \cdot](\cdot)$ is defined by \eqref{eq:def_G_eta_Q}. 
Therefore $r_{tt}(t,x)$ is given by
\begin{align}
r_{tt}(t,x)
& = 
\mathop{\mathrm{tr}}
\left[
\left(
U''\left(
\frac{\rho(x)}{|\nabla_{x} R(t,x)|}
\right)
\frac{\rho(x)^{2}}{|\nabla_{x} R(t,x)|}
+ 
G[\rho, R(t,\cdot)](x)
\right)
\cdot 
\mathop{\mathrm{tr}} 
\left(
A(t,x)
\right)
\cdot 
A(t,x)
\right.
\notag \\
& \phantom{= \mathop{\mathrm{tr}} \bigg[}
- G[\rho, R(t,\cdot)](x) \cdot A(t,x)^{2} \bigg]
\notag \\
& 
= 
\left(
U''\left(
\frac{\rho(x)}{|\nabla_{x} R(t,x)|}
\right)
\frac{\rho(x)^{2}}{|\nabla_{x} R(t,x)|}
+ 
G[\rho, R(t, \cdot)](x)
\right)
\mathop{\mathrm{tr}} 
\left(
A(t,x)
\right)^{2}
\notag \\
& 
\phantom{=} \  
- 
G[\rho, R(t, \cdot)](x) \, 
\mathop{\mathrm{tr}} 
\left(
A(t,x)^{2}
\right), 
\notag
\end{align}
where $A(t,x)$ is defined by~\eqref{eq:def_mat_A}.
Here we emphasize that $r_{tt}(t, x) = 0$ for $x \not \in \mathop{\mathrm{supp}} \rho$
because
\begin{align}
U''\left(
\frac{\rho(x)}{|\nabla_{x} R(t,x)|}
\right)
\frac{\rho(x)^{2}}{|\nabla_{x} R(t,x)|} = 0
\quad \text{and} \quad 
G[\rho, R(t, \cdot)](x) = 0
\end{align}
for $x \not \in \mathop{\mathrm{supp}} \rho$ owing to~\eqref{eq:diff_U_assump}. 
Therefore 
the domain of the integrals in~\eqref{eq:func_U_deg2_term_ub} 
can be restricted to $\mathop{\mathrm{supp}} \rho$. 
Then we assume that $x \in \mathop{\mathrm{supp}} \rho$ in the following. 

By Assumption~\ref{assump:bounded_C1_C2},
the value $\left| r_{tt}(t,x) \right|$ is bounded as follows: 
\begin{align}
\left| r_{tt}(t,x) \right|
\leq
(C_{2} + 2 C_{1}) \, 
\rho(x) 
\mathop{\mathrm{tr}} 
\left(
A(t,x)
\right)^{2},
\label{eq:abs_rp0_ub}
\end{align}
where we use 
\(
\mathop{\mathrm{tr}} 
\left(
A(t,x)^{2}
\right)
\leq 
\mathop{\mathrm{tr}} 
\left(
A(t,x)
\right)^{2}
\)
because $A(t,x) \succ O$ is guaranteed by 
Assumption~\ref{assump:matrix_A_PD}%
\footnote{
In general, 
the inequality $\mathop{\mathrm{tr}}(AB) \leq \mathop{\mathrm{tr}}(A) \mathop{\mathrm{tr}}(B)$ 
holds for positive semi-definite matrices $A$ and $B$. 
See \cite[Exercise 12.14]{abadir_magnus_2005}.
}.
From Assumptions~\ref{assump:bounded_C3_L2_L1} and~\ref{assump:bounded_C4_div}, 
we can derive an upper bound of the integral of 
\(
\rho(x) 
\mathop{\mathrm{tr}} 
\left(
A(t,x)
\right)^{2}
\) 
in~\eqref{eq:abs_rp0_ub} as follows:
\begin{align}
& \int_{\mathop{\mathrm{supp}} \rho} 
\rho(x) 
\mathop{\mathrm{tr}} 
\left(
A(t,x)
\right)^{2}
\mathrm{d}x
\notag \\
& \leq
C_{3}
\left( 
\int_{\mathop{\mathrm{supp}} \rho}
\mathop{\mathrm{tr}} 
\left(
\sqrt{\rho(x)}
A(t,x)
\right)
\mathrm{d}x
\right)^{2}
\notag \\
& = 
C_{3}
\left( 
\int_{\mathscr{S}}
\mathop{\mathrm{tr}} 
\left(
\sqrt{\rho(x)}
\nabla_{x} R(t,x)^{-1} ( \nabla T(x) - \nabla S(x) )
\right)
\mathrm{d}x
\right)^{2}
\notag \\
& = 
C_{3}
\left( 
\int_{\mathscr{S}}
\left \langle
\mathop{\mathrm{div}} 
\left(
\sqrt{\rho(x)} \nabla_{x} R(t,x)^{-1} 
\right), \ 
T(x) - S(x)
\right \rangle
\mathrm{d}x
\right)^{2}
\notag \\
& \leq 
C_{3}
\int_{\mathscr{S}}
\frac{1}{\rho(x)}
\left\| \mathop{\mathrm{div}} \left( \sqrt{\rho(x)} \nabla_{x} R(t,x)^{-1} \right) \right\|^{2} 
\mathrm{d}x
\int_{\mathscr{S}}
\left\| T(x) - S(x) \right \|^{2} \rho(x) \, \mathrm{d}x
\notag \\
& \leq 
C_{3} C_{4}
\int_{\mathscr{S}}
\left\| T(x) - S(x) \right \|^{2} \rho(x) \, \mathrm{d}x.
\label{eq:abs_rp0_int_ub}
\end{align}
Therefore 
it follows from \eqref{eq:abs_rp0_ub} and \eqref{eq:abs_rp0_int_ub} that
\begin{align}
\left| \int_{\mathop{\mathrm{supp}} \rho} r_{tt}(t,x) \, \mathrm{d}x \right|
\leq 
 (2 C_{1} + C_{2}) C_{3} C_{4}
\int_{\mathscr{S}}
\left\| T(x) - S(x) \right \|^{2} \rho(x) \, \mathrm{d}x
\notag
\end{align}
holds for any $t \in [0,1]$. 
Thus we have inequality~\eqref{eq:func_U_deg2_term_ub} for 
$L = (2 C_{1} + C_{2}) C_{3} C_{4}$.
\end{proof}

\begin{proof}[Proof of Theorem~\ref{thm:func_pot_L_smooth}]
From \eqref{eq:V_L_smooth}, 
we have
\begin{align}
V(T(x)) - V(S(x)) 
\leq 
\left \langle \nabla V(S(x)), T(x) - S(x) \right \rangle
+ \frac{L_{V}}{2} \| T(x) - S(x) \|^{2}. 
\end{align}
By integrating both sides of the above inequality with respect to $\mathrm{d}\mu(x)$, 
we can show the {\newsmoothness} of $\mathcal{V}$ with $L = L_{V}$. 
\end{proof}

\begin{proof}[Proof of Theorem~\ref{thm:func_inter_L_smooth}]
Let $S, T \in L^{2}(\mathbf{R}^{d} \to \mathbf{R}^{d})$ and $\mu \in \mathcal{P}_{2}(\mathbf{R}^{d})$. 
It follows from the Taylor theorem that
\begin{align}
& W(T(x), T(y))
\notag \\
& = 
W(S(x) + (T(x)-S(x)), S(y) + (T(y)-S(y))
\notag \\
& = 
W(S(x), S(y)) 
\notag \\
& \phantom{=} \, 
+ 
\left \langle
\nabla_{1}W(S(x), S(y)), \, T(x) - S(x)
\right \rangle
\notag \\
& \phantom{=} \, 
+ 
\left \langle
\nabla_{2}W(S(x), S(y)), \, T(y) - S(y)
\right \rangle
+
\frac{1}{2} \, d_{S,T}(x,y)^{\top} \, M_{\theta}[S, T](x, y) \, d_{S,T}(x,y),
\label{eq:Taylor_for_W_T_T}
\end{align}
where $\theta \in (0, 1)$, 
\begin{align}
& d_{S,T}(x,y)
:= 
\begin{bmatrix}
T(x) - S(x) \\
T(y) - S(y)
\end{bmatrix}, \quad \text{and}
\notag \\ 
& M_{\theta}[S, T](x, y)
:= 
H_{W} \Big( S(x) + \theta (T(x) - S(x)), \, S(y) + \theta (T(y) - S(y)) \Big). 
\notag
\end{align}
Under the assumption about $r_{W}$ in~\eqref{eq:2diff_W_eigen_finite}, 
the last term in \eqref{eq:Taylor_for_W_T_T} is bounded as follows: 
\begin{align}
& \frac{1}{2} \, d_{S,T}(x,y)^{\top} \, M_{\theta}[S, T](x, y) \, d_{S,T}(x,y)
\notag \\
& \leq 
\frac{r_{W}}{2} \| d_{S,T}(x,y) \|^{2} 
= 
\frac{r_{W}}{2} 
\left( 
\| T(x) - S(x) \|^{2} + \| T(y) - S(y) \|^{2}
\right).
\label{eq:bound_2nd_term_of_Taylor}
\end{align}
From~\eqref{eq:Taylor_for_W_T_T} and~\eqref{eq:bound_2nd_term_of_Taylor}, 
we have
\begin{align}
2 \mathcal{W}(T_{\#} \mu) 
& \leq
2 \mathcal{W}(S_{\#} \mu) 
+ 
2 \int \left \langle \int  \nabla_{1} W(S(x), S(y)) \, \mathrm{d}\mu(y), T(x) - S(x) \right \rangle \mathrm{d}\mu(x) 
\notag \\
& \phantom{\leq} \, 
+ 
r_{W} \int \| T(x) - S(x) \|^{2} \, \mathrm{d}\mu(x)
\notag \\
& = 
2 \mathcal{W}(S_{\#} \mu) 
+ 
2 \int \left \langle \mathcal{G}_{\mathcal{W}}[S_{\#}\mu](S(x)), T(x) - S(x) \right \rangle \mathrm{d}\mu(x)
\notag \\
& \phantom{\leq} \, 
+ 
r_{W} \int \| T(x) - S(x) \|^{2} \, \mathrm{d}\mu(x),
\notag
\end{align}
which implies the conclusion. 
\end{proof}

\subsection{Proofs about convergence rate}
\label{sec:proofs_conv_rate}
\subsubsection{Proof of Theorem~\ref{thm:c_rate_cont_flow}}

\begin{proof}
As shown in~\eqref{eq:Lyap_for_C_flow_pre}, 
we define $\mathcal{L}(t)$ by
\begin{align}
\mathcal{L}(t) 
:= 
\mathcal{A}(t) + \mathcal{B}(t),
\label{eq:Lyap_for_C_flow}
\end{align}
where
\begin{align}
& \mathcal{A}(t) := t \, (\mathcal{F}(\mu_{t}) - \mathcal{F}(\mu_{\ast})), \quad \text{and}
\label{eq:Lyap_t_times_F} \\
& \mathcal{B}(t) := D(\mu_{t}, \mu_{\ast}).
\label{eq:non_neg_term_for_C_flow}
\end{align}
Note that 
$D(\mu_{t}, \mu_{\ast}) = D(X(t,\cdot)_{\#} \mu_{0}, (X_{\ast})_{\#} \mu_{0})$ and 
this is defined by~\eqref{eq:def_discr_func}. 

For the function $\mathcal{A}(t)$, 
it follows from Proposition \ref{prop:decrease_of_F_mu_t} that
\begin{align}
\frac{\mathrm{d}}{\mathrm{d}t} \mathcal{A}(t)
& = 
\mathcal{F}(\mu_{t}) - \mathcal{F}(\mu_{\ast})
- t \int \left \| \mathcal{G}_{\mathcal{F}}[\mu_{t}](X(t, x)) \right \|^{2} \, \mathrm{d}\mu_{0}(x). 
\label{eq:diff_A_1}
\end{align}
The derivative of the function $\mathcal{B}(t)$ is given by
\begin{align}
\frac{\mathrm{d}}{\mathrm{d}t} \mathcal{B}(t)
& = 
\int 
\left \langle \frac{\partial}{\partial t} X(t,x) , \, X(t,x) - X_{\ast}(x) \right\rangle 
\mathrm{d}\mu_{0}(x)
\notag \\
& = 
\int
\left \langle -\mathcal{G}_{\mathcal{F}}[X(t, \cdot)_{\#} \mu_{0}](X(t,x)) , \strut \,  X(t,x) - X_{\ast}(x) \right \rangle 
\mathrm{d}\mu_{0}(x)
\notag \\
& = 
\int
\left \langle \mathcal{G}_{\mathcal{F}}[X(t, \cdot)_{\#} \mu_{0}](X(t,x)) , \strut \,  X_{\ast}(x) - X(t,x) \right \rangle
\mathrm{d}\mu_{0}(x), 
\label{eq:diff_A_2}
\end{align}
where the second equality is owing to \eqref{eq:ODE_for_X}. 

It follows from \eqref{eq:diff_A_1} and \eqref{eq:diff_A_2} that 
\begin{align}
\frac{\mathrm{d}}{\mathrm{d}t} \mathcal{L}(t)
\leq
\mathcal{F}(\mu_{t}) - \mathcal{F}(\mu_{\ast})
+ \int
\langle \mathcal{G}_{\mathcal{F}}[X(t, \cdot)_{\#} \mu_{0}](X(t,x)) , \strut \, X_{\ast}(x) - X(t,x) \rangle
\, \mathrm{d}\mu_{0}(x)
\leq 0,
\notag
\end{align}
where 
the last inequality is owing to the {\newconvexity} of $\mathcal{F}$ in Definition~\ref{dfn:pf_convex}. 
As a result,
by the estimates in 
\eqref{eq:Lyap_grad_flow_upper_bound},
we can show that 
$\mathcal{F}(\mu_{t}) - \mathcal{F}(\mu_{\ast}) = \mathrm{O}(1/t)$. 
\end{proof}

\subsubsection{Proof of Theorem~\ref{thm:c_rate_disc_flow}}

To prove Theorem~\ref{thm:c_rate_disc_flow}, we use the Lyapunov function
\begin{align}
\mathcal{L}_{n} 
:=
n \gamma (\mathcal{F}(\hat{\mu}_{n}) - \mathcal{F}(\mu_{\ast}))
+
D(\hat{\mu}_{n}, \mu_{\ast}).
\label{eq:disc_grad_Lyap}
\end{align}
Our objective is to show that the difference $\mathcal{L}_{n+1} - \mathcal{L}_{n}$ is non-positive. 
We begin with the following calculation:
\begin{align}
\mathcal{L}_{n+1} - \mathcal{L}_{n}
= & \, 
(n+1) \gamma (\mathcal{F}(\hat{\mu}_{n+1}) - \mathcal{F}(\mu_{\ast}))
\notag \\
& \, 
- (n+1) \gamma (\mathcal{F}(\hat{\mu}_{n}) - \mathcal{F}(\mu_{\ast}))
+ \gamma (\mathcal{F}(\hat{\mu}_{n}) - \mathcal{F}(\mu_{\ast}))
\notag \\
& \, 
+ D(\hat{\mu}_{n+1}, \mu_{\ast}) - D(\hat{\mu}_{n}, \mu_{\ast})
\notag \\
= & \, 
(n+1) \gamma \, \mathcal{P}_{n} + \mathcal{Q}_{n},
\label{eq:diff_of_disc_Lyap} 
\end{align}
where
\begin{align}
& \mathcal{P}_{n} 
:= 
\mathcal{F}(\hat{\mu}_{n+1}) - \mathcal{F}(\hat{\mu}_{n}),
\label{eq:diff_of_disc_Lyap_first} \\
& \mathcal{Q}_{n} 
:= 
\gamma (\mathcal{F}(\hat{\mu}_{n}) - \mathcal{F}(\mu_{\ast}))
+ 
D(\hat{\mu}_{n+1}, \mu_{\ast}) - D(\hat{\mu}_{n}, \mu_{\ast}).
\label{eq:diff_of_disc_Lyap_second}
\end{align}

\begin{lem}
\label{thm:bd_hat_P_n}
On the assumptions of Theorem~\ref{thm:c_rate_disc_flow}, 
the value $\mathcal{P}_{n}$ in \eqref{eq:diff_of_disc_Lyap_first} is bounded as follows:
\begin{align}
\mathcal{P}_{n}
\leq
\left( - \gamma + \frac{L \gamma^{2}}{2} \right)
\int  
\left \|
\mathcal{G}_{\mathcal{F}}[\hat{\mu}_{n}](z)
\right \|^{2} \,
\mathrm{d} \hat{\mu}_{n}(z).
\label{eq:bound_hatP_n}
\end{align}
\end{lem}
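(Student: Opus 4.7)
The plan is to apply the {\newsmoothness} of $\mathcal{F}$ (Definition~\ref{dfn:pf_L_smooth}) directly, taking the measure to be $\mu_{0}$ and the two transport maps to be the consecutive iterates $S = X_{n}$ and $T = X_{n+1}$. This is natural because by \eqref{eq:disc_PF_of_mu0_by_X} the measures in $\mathcal{P}_{n}$ are precisely $(X_{n})_{\#}\mu_{0}$ and $(X_{n+1})_{\#}\mu_{0}$, so the LHS of~\eqref{eq:def_pf_L_smooth} equals $\mathcal{F}(\hat{\mu}_{n+1}) - \mathcal{F}(\hat{\mu}_{n}) = \mathcal{P}_{n}$.

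First I would instantiate \eqref{eq:def_pf_L_smooth} with these choices, obtaining an upper bound on $\mathcal{P}_{n}$ involving $\int \langle \mathcal{G}_{\mathcal{F}}[\hat{\mu}_{n}](X_{n}(x)), X_{n+1}(x) - X_{n}(x) \rangle \, \mathrm{d}\mu_{0}(x)$ and $(L/2) \int \| X_{n+1}(x) - X_{n}(x) \|^{2} \, \mathrm{d}\mu_{0}(x)$. Next, the discrete gradient flow rule \eqref{eq:disc_grad_flow_for_X} gives the explicit identity $X_{n+1}(x) - X_{n}(x) = -\gamma \, \mathcal{G}_{\mathcal{F}}[\hat{\mu}_{n}](X_{n}(x))$, so substituting this in collapses both integrals into a single common integrand $\| \mathcal{G}_{\mathcal{F}}[\hat{\mu}_{n}](X_{n}(x)) \|^{2}$ weighted by $-\gamma$ and $L\gamma^{2}/2$ respectively.

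Finally, I would convert the integral over $\mu_{0}$ into an integral over $\hat{\mu}_{n}$ using the push-forward identity
\[
\int f(X_{n}(x)) \, \mathrm{d}\mu_{0}(x) = \int f(z) \, \mathrm{d}\hat{\mu}_{n}(z),
\]
applied with $f(z) = \| \mathcal{G}_{\mathcal{F}}[\hat{\mu}_{n}](z) \|^{2}$. This directly yields \eqref{eq:bound_hatP_n}. No analytical obstacle is expected here: the argument is essentially a one-line application of the smoothness inequality combined with the update rule and a change of variables. The condition $\gamma \leq 3/(2L)$ stated in Theorem~\ref{thm:c_rate_disc_flow} is not needed for this lemma itself; it will only be used later, when the sign of $-\gamma + L\gamma^{2}/2$ is combined with the bound on $\mathcal{Q}_{n}$ to show $\mathcal{L}_{n+1} - \mathcal{L}_{n} \leq 0$.
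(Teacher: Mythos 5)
Your proposal is correct and follows exactly the paper's argument: instantiate the {\newsmoothness} inequality with $\mu = \mu_{0}$, $S = X_{n}$, $T = X_{n+1}$, substitute the update rule \eqref{eq:disc_grad_flow_for_X} into both the inner-product and quadratic terms, and change variables via the push-forward identity to express the result as an integral against $\hat{\mu}_{n}$. Your observation that the step-size restriction $\gamma \leq 3/(2L)$ plays no role at this stage and only becomes relevant when combining with the bound on $\mathcal{Q}_{n}$ is also consistent with the paper.
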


\begin{proof}
By the {\newsmoothness} of $\mathcal{F}$ in Definition~\ref{dfn:pf_L_smooth}, 
we have
\begin{align}
\mathcal{P}_{n}
& = 
\mathcal{F}((X_{n+1})_{\#} \mu_{0}) - \mathcal{F}((X_{n})_{\#} \mu_{0}) 
\notag \\
& \leq 
\int  
\left \langle
\mathcal{G}_{\mathcal{F}}[(X_{n})_{\#}\mu_{0}](X_{n}(x)), \, X_{n+1}(x) - X_{n}(x)
\right \rangle \, 
\mathrm{d} \mu_{0}(x)
\notag \\
& \phantom{\leq} \
+ 
\frac{L}{2} \int 
\| X_{n+1}(x) - X_{n}(x) \|^{2} \, 
\mathrm{d} \mu_{0}(x)
\notag \\
& =
-\gamma
\int  
\left \|
\mathcal{G}_{\mathcal{F}}[(X_{n})_{\#}\mu_{0}](X_{n}(x))
\right \|^{2}
\mathrm{d} \mu_{0}(x)
+ 
\frac{L \gamma^{2}}{2} \int 
\left \|
\mathcal{G}_{\mathcal{F}}[(X_{n})_{\#}\mu_{0}](X_{n}(x))
\right \|^{2}
\mathrm{d} \mu_{0}(x)
\notag \\
& =
\left( - \gamma + \frac{L \gamma^{2}}{2} \right)
\int  
\left \|
\mathcal{G}_{\mathcal{F}}[\hat{\mu}_{n}](z)
\right \|^{2}
\mathrm{d} \hat{\mu}_{n}(z).
\notag
\end{align}
\end{proof}

\begin{lem}
\label{thm:bd_hat_Q_n}
On the assumptions of Theorem~\ref{thm:c_rate_disc_flow}, 
the value $\mathcal{Q}_{n}$ in \eqref{eq:diff_of_disc_Lyap_second} is bounded as follows:
\begin{align}
\mathcal{Q}_{n}
\leq 
\frac{\gamma^{2}}{2} \int \| \mathcal{G}_{\mathcal{F}}[\hat{\mu}_{n}](z) \|^{2} \mathrm{d} \hat{\mu}_{n}(z).
\label{eq:bound_hatQ_n}
\end{align}
\end{lem}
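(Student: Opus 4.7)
The plan is to evaluate $\mathcal{Q}_{n}$ term by term using the explicit formula for $D$ in~\eqref{eq:def_discr_func}, expand the displacement $X_{n+1} - X_{\ast}$ via the update rule~\eqref{eq:disc_grad_flow_for_X}, and then eliminate the first-order cross term by invoking the {\newconvexity} of $\mathcal{F}$ in Definition~\ref{dfn:pf_convex}. Under Assumption~\ref{assump:opt_meas_push_forward} we have $\mu_{\ast} = (X_{\ast})_{\#} \mu_{0}$, so by~\eqref{eq:def_discr_func} and~\eqref{eq:disc_PF_of_mu0_by_X},
\[
D(\hat{\mu}_{n+1}, \mu_{\ast}) - D(\hat{\mu}_{n}, \mu_{\ast})
=
\frac{1}{2} \int \| X_{n+1}(x) - X_{\ast}(x) \|^{2} \, \mathrm{d}\mu_{0}(x)
-
\frac{1}{2} \int \| X_{n}(x) - X_{\ast}(x) \|^{2} \, \mathrm{d}\mu_{0}(x).
\]

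First, I would substitute $X_{n+1}(x) = X_{n}(x) - \gamma\,\mathcal{G}_{\mathcal{F}}[\hat{\mu}_{n}](X_{n}(x))$ into the first integral, apply the identity $\|u+v\|^{2} - \|u\|^{2} = 2\langle u, v\rangle + \|v\|^{2}$ with $u = X_{n}(x) - X_{\ast}(x)$ and $v = -\gamma\,\mathcal{G}_{\mathcal{F}}[\hat{\mu}_{n}](X_{n}(x))$, and collect terms. This produces
\[
D(\hat{\mu}_{n+1}, \mu_{\ast}) - D(\hat{\mu}_{n}, \mu_{\ast})
= - \gamma \int \left \langle \mathcal{G}_{\mathcal{F}}[\hat{\mu}_{n}](X_{n}(x)),\, X_{n}(x) - X_{\ast}(x) \right \rangle \mathrm{d}\mu_{0}(x)
+ \frac{\gamma^{2}}{2} \int \| \mathcal{G}_{\mathcal{F}}[\hat{\mu}_{n}](X_{n}(x)) \|^{2} \, \mathrm{d}\mu_{0}(x),
\]
where the second integral can then be rewritten as $\frac{\gamma^{2}}{2} \int \| \mathcal{G}_{\mathcal{F}}[\hat{\mu}_{n}](z) \|^{2} \, \mathrm{d}\hat{\mu}_{n}(z)$ by push-forward.

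Second, I would bound the remaining $\gamma (\mathcal{F}(\hat{\mu}_{n}) - \mathcal{F}(\mu_{\ast}))$ term using {\newconvexity}. Applying~\eqref{eq:def_pf_convex} with $S = X_{n}$, $T = X_{\ast}$, and $\mu = \mu_{0}$ gives
\[
\mathcal{F}(\mu_{\ast}) - \mathcal{F}(\hat{\mu}_{n})
\geq
\int \left \langle \mathcal{G}_{\mathcal{F}}[\hat{\mu}_{n}](X_{n}(x)),\, X_{\ast}(x) - X_{n}(x) \right \rangle \mathrm{d}\mu_{0}(x),
\]
which, after multiplying by $-\gamma$, yields exactly the negative of the first-order term produced in the previous step. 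Adding the two estimates cancels the cross term and leaves only the quadratic remainder, giving the desired bound~\eqref{eq:bound_hatQ_n}.

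There is no real obstacle here, since the proof is essentially the standard one-step Lyapunov analysis for projected/plain gradient descent transplanted to the transport-map setting; the only point requiring care is to ensure that Assumption~\ref{assump:opt_meas_push_forward} is used to represent $\mu_{\ast}$ as $(X_{\ast})_{\#} \mu_{0}$ so that the definition of $D$ in~\eqref{eq:def_discr_func} applies, and that the {\newconvexity} is invoked on the pair $(X_{n}, X_{\ast})$ rather than on measures directly. Combining this lemma with Lemma~\ref{thm:bd_hat_P_n} via~\eqref{eq:diff_of_disc_Lyap} and the step-size condition $\gamma \leq 3/(2L)$ then gives $\mathcal{L}_{n+1} - \mathcal{L}_{n} \leq 0$, from which Theorem~\ref{thm:c_rate_disc_flow} follows.
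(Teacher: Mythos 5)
Your proposal is correct and follows essentially the same route as the paper: expand $D(\hat{\mu}_{n+1}, \mu_{\ast}) - D(\hat{\mu}_{n}, \mu_{\ast})$ via the elementary quadratic identity and the update rule~\eqref{eq:disc_grad_flow_for_X}, then invoke {\newconvexity} with $(S, T) = (X_n, X_\ast)$ to bound the remaining term and cancel the cross term. The only cosmetic difference is that the paper groups the convexity bound as showing a parenthesized quantity is nonpositive rather than phrasing it as an explicit cancellation; the content is identical.
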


\begin{proof}
To estimate $D(\hat{\mu}_{n+1}, \mu_{\ast}) - D(\hat{\mu}_{n}, \mu_{\ast})$
in the RHS in \eqref{eq:diff_of_disc_Lyap_second}, 
we use the following relation%
\footnote{
This is derived from the following fundamental relation:
\begin{align}
& \| a - c \|^{2} = \| a - b \|^{2} + 2 \langle a-b, b-c \rangle + \| b - c \|^{2}
\notag \\
& \iff \| a - c \|^{2} - \| b - c \|^{2} = 2 \langle a-b, b-c \rangle + \| a - b \|^{2}.
\notag
\end{align}}:
\begin{align}
& \frac{1}{2} \| X_{n+1}(x) - X_{\ast}(x) \|^{2} - \frac{1}{2} \| X_{n}(x) - X_{\ast}(x) \|^{2} 
\notag \\
& = 
\langle X_{n+1}(x) - X_{n}(x), \, X_{n}(x) - X_{\ast}(x) \rangle + \frac{1}{2} \| X_{n+1}(x) - X_{n}(x) \|^{2}. 
\notag
\end{align}
By integrating both sides with respect to $\mathrm{d} \mu_{0}(x)$, 
we have
\begin{align}
D(\hat{\mu}_{n+1}, \mu_{\ast}) - D(\hat{\mu}_{n}, \mu_{\ast})
& = 
\gamma \int 
\langle -\mathcal{G}_{\mathcal{F}}[(X_{n})_{\#} \mu_{0}](X_{n}(x)), \, X_{n}(x) - X_{\ast}(x) \rangle 
\, \mathrm{d} \mu_{0}(x)
\notag \\
& \phantom{=} \, 
+ \frac{\gamma^{2}}{2} \int \| \mathcal{G}_{\mathcal{F}}[(X_{n})_{\#} \mu_{0}](X_{n}(x)) \|^{2} \, \mathrm{d} \mu_{0}(x) 
\notag \\
& = 
\gamma \int 
\langle \mathcal{G}_{\mathcal{F}}[(X_{n})_{\#} \mu_{0}](X_{n}(x)), \, X_{\ast}(x) - X_{n}(x) \rangle 
\, \mathrm{d} \mu_{0}(x) 
\notag \\
& \phantom{=} \, 
+ \frac{\gamma^{2}}{2} \int \| \mathcal{G}_{\mathcal{F}}[\hat{\mu}_{n}](z) \|^{2} 
\, \mathrm{d} \hat{\mu}_{n}(z). 
\notag
\end{align}
Therefore we have
\begin{align}
\mathcal{Q}_{n} 
& = 
\gamma 
\left( 
\mathcal{F}(\hat{\mu}_{n}) - \mathcal{F}(\mu_{\ast})
+ 
\int  
\langle \mathcal{G}_{\mathcal{F}}[(X_{n})_{\#} \mu_{0}](X_{n}(x)), \, X_{\ast}(x) - X_{n}(x) \rangle 
\, \mathrm{d} \mu_{0}(x) 
\right)
\notag \\
& \phantom{=} \, 
+ 
\frac{\gamma^{2}}{2} \int \| \mathcal{G}_{\mathcal{F}}[\hat{\mu}_{n}](z) \|^{2} \, \mathrm{d} \hat{\mu}_{n}(z)
\notag \\
& \leq 
\frac{\gamma^{2}}{2} \int \| \mathcal{G}_{\mathcal{F}}[\hat{\mu}_{n}](z) \|^{2} \, \mathrm{d} \hat{\mu}_{n}(z),
\notag
\end{align}
where this inequality is owing to the {\newconvexity} of $\mathcal{F}$ in Definition~\ref{dfn:pf_convex}. 
\end{proof}

\begin{proof}[Proof of Theorem~\ref{thm:c_rate_disc_flow}]
By \eqref{eq:diff_of_disc_Lyap}, \eqref{eq:bound_hatP_n} and \eqref{eq:bound_hatQ_n}, 
we have
\begin{align}
\mathcal{L}_{n+1} - \mathcal{L}_{n}
& \leq
\left[
(n+1) \left( -\gamma^{2} + \frac{L \gamma^{3}}{2} \right) + \frac{\gamma^{2}}{2}
\right] 
\int \| \mathcal{G}_{\mathcal{F}}[\hat{\mu}_{n}](z) \|^{2} \, \mathrm{d} \hat{\mu}_{n}(z)
\notag \\
& = 
-\left( n + \frac{1}{2} \right) \gamma^{2} \left( 1 - \frac{n+1}{2n+1} L \gamma \right)
\int \| \mathcal{G}_{\mathcal{F}}[\hat{\mu}_{n}](z) \|^{2} \, \mathrm{d} \hat{\mu}_{n}(z).
\notag
\end{align}
Therefore $\mathcal{L}_{n+1} - \mathcal{L}_{n} \leq 0$ holds for $n \geq 1$ if 
\(
\gamma \leq 3/(2 L)
\). 
Then, 
we can derive from \eqref{eq:disc_grad_Lyap} the following estimate:
\begin{align}
n \gamma (\mathcal{F}(\hat{\mu}_{n}) - \mathcal{F}(\mu_{\ast}))
\leq 
\mathcal{L}_{n} 
\leq 
\mathcal{L}_{1}
= 
\gamma (\mathcal{F}(\hat{\mu}_{1}) - \mathcal{F}(\mu_{\ast}))
+
D(\hat{\mu}_{1}, \mu_{\ast}).
\notag
\end{align}
Therefore we have the conclusion as follows:
\begin{align}
\mathcal{F}(\hat{\mu}_{n}) - \mathcal{F}(\mu_{\ast})
\leq
\frac{\gamma (\mathcal{F}(\hat{\mu}_{1}) - \mathcal{F}(\mu_{\ast}))
+
D(\hat{\mu}_{1}, \mu_{\ast})}{n \gamma}.
\notag
\end{align}
\end{proof}

\subsubsection{Proof of Theorem~\ref{thm:c_rate_acc_cont_flow}}
\label{sec:est_acc_grad_cont_flow}

\begin{proof}
We use the Lyapunov function
\begin{align}
\mathcal{L}^{\mathrm{ac}}(t)
:=
t^{r} (\mathcal{F}(\mu_{t}^{\mathrm{ac}}) - \mathcal{F}(\mu_{\ast}))
+
D(\nu_{t}^{\mathrm{ac}}, \mu_{\ast}). 
\end{align}
Then, we have
\begin{align}
\frac{\mathrm{d}}{\mathrm{d}t} \mathcal{L}^{\mathrm{ac}}(t) 
=
rt^{r-1} (\mathcal{F}(\mu_{t}^{\mathrm{ac}}) - \mathcal{F}(\mu_{\ast})) 
+ t^{r} \frac{\mathrm{d}}{\mathrm{d}t} \mathcal{F}(\mu_{t}^{\mathrm{ac}}) 
+ \frac{\mathrm{d}}{\mathrm{d}t} D(\nu_{t}^{\mathrm{ac}}, \mu_{\ast}).
\label{eq:acc_cont_Lyap_diff}
\end{align}
We calculate the second and third terms in the RHS above. 
First, we have
\begin{align}
t^{r} \frac{\mathrm{d}}{\mathrm{d}t} \mathcal{F}(\mu_{t}^{\mathrm{ac}})
& = 
t^{r} \int  
\left \langle \mathcal{G}_{\mathcal{F}}[\mu_{t}^{\mathrm{ac}}](X^{\mathrm{ac}}(t,x)), \frac{\partial}{\partial t} X^{\mathrm{ac}}(t,x) \right \rangle
\mathrm{d}\mu_{0}(x)
\notag \\
& = 
rt^{r-1} \int 
\Big \langle \mathcal{G}_{\mathcal{F}}[\mu_{t}^{\mathrm{ac}}](X^{\mathrm{ac}}(t,x)), Y^{\mathrm{ac}}(t,x) - X^{\mathrm{ac}}(t,x) \Big \rangle
\, \mathrm{d}\mu_{0}(x), 
\label{eq:func_Q}
\end{align}
where the first and second equalities are owing to 
\eqref{eq:char_vec_field_for_F_mu_t} and 
\eqref{eq:AccODE_for_XY_1_moment}, respectively. 
Next, we have
\begin{align}
\frac{\mathrm{d}}{\mathrm{d}t} D(\nu_{t}^{\mathrm{ac}}, \mu_{\ast})
& = 
\int  
\left \langle
\frac{\partial}{\partial t}Y^{\mathrm{ac}}(t,x), Y^{\mathrm{ac}}(t,x) - X_{\ast}(x)
\right \rangle \, 
\mathrm{d}\mu_{0}(x)
\notag \\
& = 
rt^{r-1}
\int 
\left \langle
-\mathcal{G}_{\mathcal{F}}[X^{\mathrm{ac}}(t,\cdot)_{\#} \mu_{0}](X^{\mathrm{ac}}(t,x)), \strut Y^{\mathrm{ac}}(t,x) - X_{\ast}(x)
\right \rangle \, 
\mathrm{d}\mu_{0}(x)
\notag \\
& = 
rt^{r-1}
\int 
\left \langle
\mathcal{G}_{\mathcal{F}}[\mu_{t}^{\mathrm{ac}}](X^{\mathrm{ac}}(t,x)), \strut X_{\ast}(x) - Y^{\mathrm{ac}}(t,x)
\right \rangle \, 
\mathrm{d}\mu_{0}(x),
\label{eq:func_S}
\end{align}
where the second equality is owing to \eqref{eq:AccODE_for_XY_2_vecfield}. 
By \eqref{eq:acc_cont_Lyap_diff}, \eqref{eq:func_Q}, and \eqref{eq:func_S}, 
the derivative of the Lyapunov function $\mathcal{L}^{\mathrm{ac}}(t)$
satisfies
\begin{align}
& \frac{\mathrm{d}}{\mathrm{d}t} \mathcal{L}^{\mathrm{ac}}(t) 
\notag \\
& =
rt^{r-1}
\left(
\mathcal{F}(\mu_{t}^{\mathrm{ac}}) - \mathcal{F}(\mu_{\ast})
+
\int 
\left \langle
\mathcal{G}_{\mathcal{F}}[\mu_{t}^{\mathrm{ac}}](X^{\mathrm{ac}}(t,x)), \strut X_{\ast}(x) - X^{\mathrm{ac}}(t,x)
\right \rangle \,
\mathrm{d}\mu_{0}(x)
\right)
\leq 0. 
\notag
\end{align}
The last inequality is owing to the {\newconvexity} of $\mathcal{F}$ in Definition~\ref{dfn:pf_convex}. 
Then, we have
\begin{align}
t^{r} \, (\mathcal{F}(\mu_{t}^{\mathrm{ac}}) - \mathcal{F}(\mu_{\ast}))
\leq 
\mathcal{L}^{\mathrm{ac}}(t) 
\leq 
\mathcal{L}^{\mathrm{ac}}(0) 
= 
D(\mu_{0}^{\mathrm{ac}}, \mu_{\ast}),
\end{align}
which implies
\begin{align}
\mathcal{F}(\mu_{t}^{\mathrm{ac}}) - \mathcal{F}(\mu_{\ast}) \leq \frac{1}{t^{r}} \, D(\mu_{0}^{\mathrm{ac}}, \mu_{\ast}).
\end{align}
This inequality means that 
$\mathcal{F}(\mu_{t}^{\mathrm{ac}}) - \mathcal{F}(\mu_{\ast})$ converges to $0$ with order $\mathrm{O}(1/t^{r})$
as $t \to \infty$.
\end{proof}

\subsubsection{Proof of Theorem~\ref{thm:c_rate_acc_disc_flow}}

To prove Theorem~\ref{thm:c_rate_acc_disc_flow}, 
we use the Lyapunov function
\begin{align}
\mathcal{L}_{n}^{\mathrm{ac}} 
:=
a_{n} (\mathcal{F}(\hat{\nu}_{n}^{\mathrm{ac}}) - \mathcal{F}(\mu_{\ast}))
+
D(\hat{\xi}_{n}^{\mathrm{ac}}, \mu_{\ast}).
\label{eq:disc_grad_Lyap_Acc}
\end{align}
We calculate the difference $\mathcal{L}_{n+1}^{\mathrm{ac}}  - \mathcal{L}_{n}^{\mathrm{ac}}$ as follows:
\begin{align}
\mathcal{L}_{n+1}^{\mathrm{ac}}  - \mathcal{L}_{n}^{\mathrm{ac}}
& = 
a_{n+1} (\mathcal{F}(\hat{\nu}_{n+1}^{\mathrm{ac}}) - \mathcal{F}(\mu_{\ast}))
- a_{n} (\mathcal{F}(\hat{\nu}_{n}^{\mathrm{ac}}) - \mathcal{F}(\mu_{\ast})) 
+ \mathcal{R}_{n}^{\mathrm{ac}}
\notag \\
& = 
a_{n+1} (\mathcal{F}(\hat{\nu}_{n+1}^{\mathrm{ac}}) - \mathcal{F}(\hat{\nu}_{n}^{\mathrm{ac}}) )
+ (a_{n+1} - a_{n}) (\mathcal{F}(\hat{\nu}_{n}^{\mathrm{ac}}) - \mathcal{F}(\mu_{\ast})) 
+ \mathcal{R}_{n}^{\mathrm{ac}}, 
\label{eq:diff_of_disc_Lyap_NAG} 
\end{align}
where
\begin{align}
\mathcal{R}_{n}^{\mathrm{ac}}
:=
D( \hat{\xi}_{n+1}^{\mathrm{ac}}, \mu_{\ast}) - D( \hat{\xi}_{n}^{\mathrm{ac}}, \mu_{\ast}).
\label{eq:diff_of_disc_Lyap_first_NAG}
\end{align}

\begin{lem}
\label{thm:expr_R_n_ac}
On the assumptions of Theorem~\ref{thm:c_rate_acc_disc_flow}, 
the value $\mathcal{R}_{n}^{\mathrm{ac}}$ in~\eqref{eq:diff_of_disc_Lyap_first_NAG}
is given by 
\begin{align}
\mathcal{R}_{n}^{\mathrm{ac}}
& = 
- \frac{(a_{n+1} - a_{n})^{2}}{2} \, \int \| \mathcal{G}_{\mathcal{F}}[\hat{\mu}_{n+1}^{\mathrm{ac}}](x) \|^{2} \, 
\mathrm{d}\hat{\mu}_{n+1}^{\mathrm{ac}}(x)
\notag \\
& \phantom{=} \, 
- (a_{n+1} - a_{n}) \int \langle \mathcal{G}_{\mathcal{F}}[\hat{\mu}_{n+1}^{\mathrm{ac}}](X_{n+1}^{\mathrm{ac}}(x)), Z_{n+1}^{\mathrm{ac}}(x) - X_{\ast}(x) \rangle \, 
\mathrm{d}\mu_{0}(x). 
\label{eq:diff_of_disc_Lyap_first_NAG_after}
\end{align}
\end{lem}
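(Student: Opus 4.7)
The plan is to compute $\mathcal{R}_{n}^{\mathrm{ac}}$ directly from the definition of the discrepancy functional in~\eqref{eq:def_discr_func}, using the update rule~\eqref{eq:acc_disc_grad_flow_for_Z} for $Z_{n+1}^{\mathrm{ac}}$. Writing $\hat{\xi}_{n}^{\mathrm{ac}} = (Z_{n}^{\mathrm{ac}})_{\#}\mu_{0}$ and $\mu_{\ast} = (X_{\ast})_{\#}\mu_{0}$, we have
\[
\mathcal{R}_{n}^{\mathrm{ac}}
=
\frac{1}{2}\int \bigl(\|Z_{n+1}^{\mathrm{ac}}(x) - X_{\ast}(x)\|^{2} - \|Z_{n}^{\mathrm{ac}}(x) - X_{\ast}(x)\|^{2}\bigr)\,\mathrm{d}\mu_{0}(x),
\]
so the task is purely pointwise plus an integration.

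The key algebraic step is the identity
\[
\tfrac{1}{2}\|a-c\|^{2} - \tfrac{1}{2}\|b-c\|^{2}
= \langle a-b,\, a-c\rangle - \tfrac{1}{2}\|a-b\|^{2},
\]
applied with $a = Z_{n+1}^{\mathrm{ac}}(x)$, $b = Z_{n}^{\mathrm{ac}}(x)$, $c = X_{\ast}(x)$. This is the natural counterpart of the identity used in Lemma~\ref{thm:bd_hat_Q_n}, but with the roles of $a$ and $b$ swapped; the swap produces the negative sign in front of $\|a-b\|^{2}$ and leaves the inner product anchored at $Z_{n+1}^{\mathrm{ac}}(x)-X_{\ast}(x)$, which matches the target expression~\eqref{eq:diff_of_disc_Lyap_first_NAG_after}. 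I would then substitute $a - b = Z_{n+1}^{\mathrm{ac}}(x) - Z_{n}^{\mathrm{ac}}(x) = -(a_{n+1}-a_{n})\,\mathcal{G}_{\mathcal{F}}[\hat{\mu}_{n+1}^{\mathrm{ac}}](X_{n+1}^{\mathrm{ac}}(x))$ from~\eqref{eq:acc_disc_grad_flow_for_Z}, which converts the inner product into a term with prefactor $-(a_{n+1}-a_{n})$ and the squared norm into a term with prefactor $-(a_{n+1}-a_{n})^{2}/2$.

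Finally, to convert the squared-norm term into an integral with respect to $\hat{\mu}_{n+1}^{\mathrm{ac}}$, I would apply the change of variables $\hat{\mu}_{n+1}^{\mathrm{ac}} = (X_{n+1}^{\mathrm{ac}})_{\#}\mu_{0}$:
\[
\int \|\mathcal{G}_{\mathcal{F}}[\hat{\mu}_{n+1}^{\mathrm{ac}}](X_{n+1}^{\mathrm{ac}}(x))\|^{2}\,\mathrm{d}\mu_{0}(x)
= \int \|\mathcal{G}_{\mathcal{F}}[\hat{\mu}_{n+1}^{\mathrm{ac}}](x)\|^{2}\,\mathrm{d}\hat{\mu}_{n+1}^{\mathrm{ac}}(x).
\]
The inner-product term is left in its $\mathrm{d}\mu_{0}$ form because $Z_{n+1}^{\mathrm{ac}}(x) - X_{\ast}(x)$ does not factor through $X_{n+1}^{\mathrm{ac}}$. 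Integrating the pointwise identity over $\mathrm{d}\mu_{0}(x)$ and collecting terms yields~\eqref{eq:diff_of_disc_Lyap_first_NAG_after}.

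There is essentially no obstacle here: the lemma is a bookkeeping computation. The only subtlety is choosing the correct form of the $\|a-c\|^{2}-\|b-c\|^{2}$ identity so that the inner-product term ends up evaluated at $Z_{n+1}^{\mathrm{ac}} - X_{\ast}$ rather than $Z_{n}^{\mathrm{ac}} - X_{\ast}$; this precise form is what makes $\mathcal{R}_{n}^{\mathrm{ac}}$ combine cleanly with the other terms in~\eqref{eq:diff_of_disc_Lyap_NAG} later in the proof of Theorem~\ref{thm:c_rate_acc_disc_flow}.
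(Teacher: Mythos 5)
Your proof is correct and matches the paper's own argument essentially line for line: both use the same polarization identity (the paper states it as $\|a-c\|^{2}-\|b-c\|^{2}=-\|a-b\|^{2}+2\langle a-b,a-c\rangle$), apply it with $a=Z_{n+1}^{\mathrm{ac}}(x)$, $b=Z_{n}^{\mathrm{ac}}(x)$, $c=X_{\ast}(x)$, substitute the update rule~\eqref{eq:acc_disc_grad_flow_for_Z}, integrate against $\mathrm{d}\mu_{0}$, and convert the squared-norm integral via the push-forward $\hat{\mu}_{n+1}^{\mathrm{ac}}=(X_{n+1}^{\mathrm{ac}})_{\#}\mu_{0}$.
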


\begin{proof}
We consider the following identity%
\footnote{
This is derived from the following fundamental relation:
\begin{align}
& \| b - c \|^{2} 
= 
\| (a - b) - (a - c) \|^{2}
= 
\| a - b \|^{2} - 2 \langle a-b, a-c \rangle + \| a-c \|^{2}
\notag\\
& \iff
\| a - c \|^{2} - \| b - c \|^{2}
= 
- \| a - b \|^{2} + 2 \langle a-b, a-c \rangle.
\notag 
\end{align}
}: 
\begin{align}
& \| Z_{n+1}^{\mathrm{ac}}(x) - X_{\ast}(x) \|^{2} - \| Z_{n}^{\mathrm{ac}}(x) - X_{\ast}(x) \|^{2}
\notag \\
& = 
- \| Z_{n+1}^{\mathrm{ac}}(x) - Z_{n}^{\mathrm{ac}}(x) \|^{2} 
+ 2 \langle Z_{n+1}^{\mathrm{ac}}(x) - Z_{n}^{\mathrm{ac}}(x), Z_{n+1}^{\mathrm{ac}}(x) - X_{\ast}(x) \rangle. 
\notag
\end{align}
From this identity and \eqref{eq:acc_disc_grad_flow_for_Z}, we have
\begin{align}
& \| Z_{n+1}^{\mathrm{ac}}(x) -  X_{\ast}(x) \|^{2} - \| Z_{n}^{\mathrm{ac}}(x) - X_{\ast}(x) \|^{2}
\notag \\
& = 
- (a_{n+1} - a_{n})^{2} \, \| \mathcal{G}_{\mathcal{F}}[(X_{n+1}^{\mathrm{ac}})_{\#} \mu_{0}](X_{n+1}^{\mathrm{ac}}(x)) \|^{2} 
\notag \\
& \phantom{=} \, 
- 2 (a_{n+1} - a_{n}) \, \langle \mathcal{G}_{\mathcal{F}}[(X_{n+1}^{\mathrm{ac}})_{\#} \mu_{0}](X_{n+1}^{\mathrm{ac}}(x)), Z_{n+1}^{\mathrm{ac}}(x) - X_{\ast}(x) \rangle
\notag
\end{align}
By integrating both sides with respect to $\mathrm{d}\mu_{0}(x)$, 
we have \eqref{eq:diff_of_disc_Lyap_first_NAG_after}.
\end{proof}

From \eqref{eq:diff_of_disc_Lyap_NAG} and \eqref{eq:diff_of_disc_Lyap_first_NAG_after}, 
the difference $\mathcal{L}_{n+1}^{\mathrm{ac}}  - \mathcal{L}_{n}^{\mathrm{ac}}$ is expressed as
\begin{align}
\mathcal{L}_{n+1}^{\mathrm{ac}}  - \mathcal{L}_{n}^{\mathrm{ac}}
& = 
- \frac{(a_{n+1} - a_{n})^{2}}{2} \, \int \| \mathcal{G}_{\mathcal{F}}[\hat{\mu}_{n+1}^{\mathrm{ac}}](x) \|^{2} \, \mathrm{d}\hat{\mu}_{n+1}^{\mathrm{ac}}(x)
+ \mathcal{S}_{n}^{\mathrm{ac}}, 
\label{eq:diff_of_disc_Lyap_NAG_ver2}
\end{align}
where 
\begin{align}
& \mathcal{S}_{n}^{\mathrm{ac}}
:= 
a_{n+1} (\mathcal{F}(\hat{\nu}_{n+1}^{\mathrm{ac}}) - \mathcal{F}(\hat{\nu}_{n}^{\mathrm{ac}}) )
+ (a_{n+1} - a_{n}) \, \mathcal{T}_{n}^{\mathrm{ac}}, 
\label{eq:diff_of_disc_S_NAG} \\
& \mathcal{T}_{n}^{\mathrm{ac}}
:= 
\mathcal{F}(\hat{\nu}_{n}^{\mathrm{ac}}) - \mathcal{F}(\mu_{\ast}) - 
\int \langle \mathcal{G}_{\mathcal{F}}[\hat{\mu}_{n+1}^{\mathrm{ac}}](X_{n+1}^{\mathrm{ac}}(x)), Z_{n+1}^{\mathrm{ac}}(x) - X_{\ast}(x) \rangle \, 
\mathrm{d}\mu_{0}(x).
\label{eq:diff_of_disc_T_NAG}
\end{align}

\begin{lem}
\label{lem:bound_S_n_ac}
On the assumptions of Theorem~\ref{thm:c_rate_acc_disc_flow}, 
the value $\mathcal{S}_{n}^{\mathrm{ac}}$ in~\eqref{eq:diff_of_disc_S_NAG} is bounded as
\begin{align}
\mathcal{S}_{n}^{\mathrm{ac}}
& \leq
a_{n+1} \, (\mathcal{F}(\hat{\nu}_{n+1}^{\mathrm{ac}}) - \mathcal{F}(\hat{\mu}_{n+1}^{\mathrm{ac}}) )
+ 
(a_{n+1} - a_{n})^{2} \int \| \mathcal{G}_{\mathcal{F}}[\hat{\mu}_{n+1}^{\mathrm{ac}}](x) \|^{2} \, \mathrm{d}\hat{\mu}_{n+1}^{\mathrm{ac}}(x).
\label{eq:diff_of_disc_S_NAG_bound_3} 
\end{align}
\end{lem}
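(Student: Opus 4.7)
The plan is to isolate the term $a_{n+1}(\mathcal{F}(\hat{\nu}_{n+1}^{\mathrm{ac}}) - \mathcal{F}(\hat{\mu}_{n+1}^{\mathrm{ac}}))$ that appears on the right-hand side of \eqref{eq:diff_of_disc_S_NAG_bound_3} by adding and subtracting $a_{n+1}\mathcal{F}(\hat{\mu}_{n+1}^{\mathrm{ac}})$ in the definition \eqref{eq:diff_of_disc_S_NAG}--\eqref{eq:diff_of_disc_T_NAG} of $\mathcal{S}_n^{\mathrm{ac}}$. Writing $g := \mathcal{G}_{\mathcal{F}}[\hat{\mu}_{n+1}^{\mathrm{ac}}]$ and using $a_{n+1} = a_n + (a_{n+1}-a_n)$, this reduces the lemma to showing that the quantity $a_{n+1}\mathcal{F}(\hat{\mu}_{n+1}^{\mathrm{ac}}) - a_n\mathcal{F}(\hat{\nu}_n^{\mathrm{ac}}) - (a_{n+1}-a_n)\mathcal{F}(\mu_\ast) - (a_{n+1}-a_n)\int \langle g(X_{n+1}^{\mathrm{ac}}(x)), Z_{n+1}^{\mathrm{ac}}(x) - X_\ast(x)\rangle\,\mathrm{d}\mu_0(x)$ is bounded above by $(a_{n+1}-a_n)^2 \int \|g(X_{n+1}^{\mathrm{ac}}(x))\|^2\,\mathrm{d}\mu_0(x)$; the change of variables $z = X_{n+1}^{\mathrm{ac}}(x)$ then recasts this last integral as the integral of $\|g(z)\|^2$ against $\mathrm{d}\hat{\mu}_{n+1}^{\mathrm{ac}}(z)$, matching \eqref{eq:diff_of_disc_S_NAG_bound_3} exactly.

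To handle the three $\mathcal{F}$-values I would apply the {\newconvexity} inequality \eqref{eq:def_pf_convex} twice, both times with $\mu = \mu_0$ and $S = X_{n+1}^{\mathrm{ac}}$: once with $T = Y_n^{\mathrm{ac}}$ and once with $T = X_\ast$. Weighting these two inequalities by the positive coefficients $a_n$ and $a_{n+1}-a_n$ respectively (both positive because $\{a_n\}$ is increasing in \eqref{eq:def_A_n}) and summing yields an upper bound on $a_{n+1}\mathcal{F}(\hat{\mu}_{n+1}^{\mathrm{ac}}) - a_n\mathcal{F}(\hat{\nu}_n^{\mathrm{ac}}) - (a_{n+1}-a_n)\mathcal{F}(\mu_\ast)$ expressed as the integral of the inner product of $g(X_{n+1}^{\mathrm{ac}}(x))$ with the anchor vector $a_{n+1}X_{n+1}^{\mathrm{ac}}(x) - a_n Y_n^{\mathrm{ac}}(x) - (a_{n+1}-a_n)X_\ast(x)$. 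Splitting this anchor as $[a_{n+1}X_{n+1}^{\mathrm{ac}} - a_n Y_n^{\mathrm{ac}} - (a_{n+1}-a_n)Z_{n+1}^{\mathrm{ac}}] + (a_{n+1}-a_n)(Z_{n+1}^{\mathrm{ac}} - X_\ast)$ allows the second piece to cancel the $Z_{n+1}^{\mathrm{ac}}$-term already present in $\mathcal{T}_n^{\mathrm{ac}}$, so the claim is reduced to the pointwise identity $a_{n+1}X_{n+1}^{\mathrm{ac}}(x) - a_n Y_n^{\mathrm{ac}}(x) - (a_{n+1}-a_n)Z_{n+1}^{\mathrm{ac}}(x) = (a_{n+1}-a_n)^2 g(X_{n+1}^{\mathrm{ac}}(x))$.

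This identity is purely algebraic and follows from two scheme relations. The interpolation step \eqref{eq:acc_disc_grad_flow_for_X} rearranges to $a_{n+1}X_{n+1}^{\mathrm{ac}} = a_n Y_n^{\mathrm{ac}} + (a_{n+1}-a_n) Z_n^{\mathrm{ac}}$, and the $Z$-recursion \eqref{eq:acc_disc_grad_flow_for_Z} gives $Z_n^{\mathrm{ac}} - Z_{n+1}^{\mathrm{ac}} = (a_{n+1}-a_n) g(X_{n+1}^{\mathrm{ac}})$; composing them produces exactly the required expression, so this final step is actually an equality. Consequently the only inequality in the proof of \eqref{eq:diff_of_disc_S_NAG_bound_3} is the {\newconvexity} step.

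The main obstacle is recognizing that the interpolation weight $(a_{n+1}-a_n)/a_{n+1}$ in \eqref{eq:acc_disc_grad_flow_for_X} has been chosen precisely so that the anchor vector above collapses to a scalar multiple of $g(X_{n+1}^{\mathrm{ac}})$; this is the discrete counterpart of the matching condition that makes the NAG flow \eqref{eq:AccODE_for_XY_1_moment}--\eqref{eq:AccODE_for_XY_2_vecfield} effective, and it is the point at which Nesterov's scheme genuinely enters. Notably, no {\newsmoothness} of $\mathcal{F}$ is required for this lemma: smoothness is used only for the companion Lyapunov term (where the update \eqref{eq:def_mathcal_G} for $\mathfrak{F}$ must be exploited to bound the retained $a_{n+1}(\mathcal{F}(\hat{\nu}_{n+1}^{\mathrm{ac}}) - \mathcal{F}(\hat{\mu}_{n+1}^{\mathrm{ac}}))$), whereas the present bound rests entirely on {\newconvexity} and the algebraic structure of the updates \eqref{eq:acc_disc_grad_flow_for_X}--\eqref{eq:acc_disc_grad_flow_for_Z}.
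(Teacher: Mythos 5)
Your proposal is correct and follows essentially the same route as the paper: the paper also applies {\newconvexity} twice with $S = X_{n+1}^{\mathrm{ac}}$ (once with $T = X_\ast$ to bound $\mathcal{T}_n^{\mathrm{ac}}$, once with $T = Y_n^{\mathrm{ac}}$ to bound the resulting auxiliary term), and then uses the scheme relations~\eqref{eq:acc_disc_grad_flow_for_X} and~\eqref{eq:acc_disc_grad_flow_for_Z} to collapse the residual anchor vector (written there as $\Omega_n^{\mathrm{ac}}$, which is exactly the negative of your anchor) into $(a_{n+1}-a_n)^2\,\mathcal{G}_{\mathcal{F}}[\hat{\mu}_{n+1}^{\mathrm{ac}}](X_{n+1}^{\mathrm{ac}})$. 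You merely apply the two convexity inequalities in a single weighted sum rather than sequentially, and your closing observation that only {\newconvexity} (not {\newsmoothness}) enters here is accurate.
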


\begin{proof}
By the {\newconvexity} of $\mathcal{F}$ in Definition~\ref{dfn:pf_convex}, 
$\mathcal{T}_{n}^{\mathrm{ac}}$ is bounded as follows:
\begin{align}
\mathcal{T}_{n}^{\mathrm{ac}}
& = 
\mathcal{F}(\hat{\mu}_{n+1}^{\mathrm{ac}}) - \mathcal{F}(\mu_{\ast}) -
\int \langle \mathcal{G}_{\mathcal{F}}[\hat{\mu}_{n+1}^{\mathrm{ac}}](X_{n+1}^{\mathrm{ac}}(x)), X_{n+1}^{\mathrm{ac}}(x) - X_{\ast}(x) \rangle
\, \mathrm{d}\mu_{0}(x)
\notag \\
& \phantom{=} \, 
- \mathcal{F}(\hat{\mu}_{n+1}^{\mathrm{ac}}) + \mathcal{F}(\hat{\nu}_{n}^{\mathrm{ac}}) -
\int \langle \mathcal{G}_{\mathcal{F}}[\hat{\mu}_{n+1}^{\mathrm{ac}}](X_{n+1}^{\mathrm{ac}}(x)), Z_{n+1}^{\mathrm{ac}}(x) - X_{n+1}^{\mathrm{ac}}(x) \rangle
\, \mathrm{d}\mu_{0}(x)
\notag \\
& \leq 
- \mathcal{F}(\hat{\mu}_{n+1}^{\mathrm{ac}}) + \mathcal{F}(\hat{\nu}_{n}^{\mathrm{ac}}) - 
\int \langle \mathcal{G}_{\mathcal{F}}[\hat{\mu}_{n+1}^{\mathrm{ac}}](X_{n+1}^{\mathrm{ac}}(x)), Z_{n+1}^{\mathrm{ac}}(x) - X_{n+1}^{\mathrm{ac}}(x) \rangle
\, \mathrm{d}\mu_{0}(x). 
\label{eq:bound_hat_T_n}
\end{align}

Then, it follows from \eqref{eq:diff_of_disc_S_NAG} and \eqref{eq:bound_hat_T_n} that
\begin{align}
\mathcal{S}_{n}^{\mathrm{ac}}
& \leq 
a_{n+1} \, (\mathcal{F}(\hat{\nu}_{n+1}^{\mathrm{ac}}) - \mathcal{F}(\hat{\mu}_{n+1}^{\mathrm{ac}}) )
+ a_{n} \, (\mathcal{F}(\hat{\mu}_{n+1}^{\mathrm{ac}}) - \mathcal{F}(\hat{\nu}_{n}^{\mathrm{ac}}))
\notag \\
& \phantom{\leq} \, 
- (a_{n+1} - a_{n}) \int \langle \mathcal{G}_{\mathcal{F}}[\hat{\mu}_{n+1}^{\mathrm{ac}}](X_{n+1}^{\mathrm{ac}}(x)), Z_{n+1}^{\mathrm{ac}}(x) - X_{n+1}^{\mathrm{ac}}(x) \rangle
\, \mathrm{d}\mu_{0}(x)
\notag \\
& = 
a_{n+1} \, (\mathcal{F}(\hat{\nu}_{n+1}^{\mathrm{ac}}) - \mathcal{F}(\hat{\mu}_{n+1}^{\mathrm{ac}}) )
\notag \\
& \phantom{=} \, 
- a_{n+1} \int \langle \mathcal{G}_{\mathcal{F}}[\hat{\mu}_{n+1}^{\mathrm{ac}}](X_{n+1}^{\mathrm{ac}}(x)), Z_{n+1}^{\mathrm{ac}}(x) - X_{n+1}^{\mathrm{ac}}(x) \rangle
\, \mathrm{d}\mu_{0}(x) 
+ a_{n} \, \hat{\mathcal{U}}_{n}^{\mathrm{ac}}, 
\label{eq:diff_of_disc_S_NAG_bound_1} 
\end{align}
where
\begin{align}
\mathcal{U}_{n}^{\mathrm{ac}} 
:= 
\mathcal{F}(\hat{\mu}_{n+1}^{\mathrm{ac}}) - \mathcal{F}(\hat{\nu}_{n}^{\mathrm{ac}}) 
-
\int \langle \mathcal{G}_{\mathcal{F}}[\hat{\mu}_{n+1}^{\mathrm{ac}}](X_{n+1}^{\mathrm{ac}}(x)), X_{n+1}^{\mathrm{ac}}(x) - Z_{n+1}^{\mathrm{ac}}(x) \rangle
\, \mathrm{d}\mu_{0}(x). 
\label{eq:diff_of_disc_U_NAG}
\end{align}
By the {\newconvexity} of $\mathcal{F}$ in Definition~\ref{dfn:pf_convex}, 
$\hat{\mathcal{U}}_{n}^{\mathrm{ac}}$ is bounded as follows:
\begin{align}
\hat{\mathcal{U}}_{n}^{\mathrm{ac}} 
& = 
\mathcal{F}(\hat{\mu}_{n+1}^{\mathrm{ac}}) - \mathcal{F}(\hat{\nu}_{n}^{\mathrm{ac}}) 
-
\int \langle \mathcal{G}_{\mathcal{F}}[\hat{\mu}_{n+1}^{\mathrm{ac}}](X_{n+1}^{\mathrm{ac}}(x)), X_{n+1}^{\mathrm{ac}}(x) - Y_{n}^{\mathrm{ac}}(x) \rangle
\, \mathrm{d}\mu_{0}(x)
\notag \\
& \phantom{= \mathcal{F}(\hat{\mu}_{n+1}^{\mathrm{ac}}) - \mathcal{F}(\hat{\nu}_{n}^{\mathrm{ac}})} \, 
-
\int \langle \mathcal{G}_{\mathcal{F}}[\hat{\mu}_{n+1}^{\mathrm{ac}}](X_{n+1}^{\mathrm{ac}}(x)), Y_{n}^{\mathrm{ac}}(x) - Z_{n+1}^{\mathrm{ac}}(x) \rangle
\, \mathrm{d}\mu_{0}(x) 
\notag \\
& \leq
- \int \langle \mathcal{G}_{\mathcal{F}}[\hat{\mu}_{n+1}^{\mathrm{ac}}](X_{n+1}^{\mathrm{ac}}(x)), Y_{n}^{\mathrm{ac}}(x) - Z_{n+1}^{\mathrm{ac}}(x) \rangle
\, \mathrm{d}\mu_{0}(x). 
\label{eq:diff_of_disc_U_NAG_bound}
\end{align}

From \eqref{eq:diff_of_disc_S_NAG_bound_1} and \eqref{eq:diff_of_disc_U_NAG_bound}, 
we have
\begin{align}
\mathcal{S}_{n}^{\mathrm{ac}}
& \leq 
a_{n+1} \, (\mathcal{F}(\hat{\nu}_{n+1}^{\mathrm{ac}}) - \mathcal{F}(\hat{\mu}_{n+1}^{\mathrm{ac}}) )
- \int \langle \mathcal{G}_{\mathcal{F}}[\hat{\mu}_{n+1}^{\mathrm{ac}}](X_{n+1}^{\mathrm{ac}}(x)), \Omega_{n}^{\mathrm{ac}}(x) \rangle
\, \mathrm{d}\mu_{0}(x), 
\label{eq:diff_of_disc_S_NAG_bound_2} 
\end{align}
where
\begin{align}
\Omega_{n}^{\mathrm{ac}}(x) := a_{n+1}\, (Z_{n+1}^{\mathrm{ac}}(x) - X_{n+1}^{\mathrm{ac}}(x)) + a_{n}\, (Y_{n}^{\mathrm{ac}}(x) - Z_{n+1}^{\mathrm{ac}}(x)). 
\end{align}
By substituting $X_{n+1}^{\mathrm{ac}}$ in~\eqref{eq:acc_disc_grad_flow_for_X} to the RHS above, 
we obtain another expression of $\Omega_{n}^{\mathrm{ac}}(x)$ as follows:
\begin{align}
& \Omega_{n}^{\mathrm{ac}}(x) 
\notag \\
& = 
a_{n+1}\, \left( Z_{n+1}^{\mathrm{ac}}(x) - Y_{n}^{\mathrm{ac}}(x) - \frac{a_{n+1} 
- a_{n}}{a_{n+1}} (Z_{n}^{\mathrm{ac}}(x) - Y_{n}^{\mathrm{ac}}(x)) \right) + a_{n}\, (Y_{n}^{\mathrm{ac}}(x) - Z_{n+1}^{\mathrm{ac}}(x))
\notag \\
& = 
a_{n+1} Z_{n+1}^{\mathrm{ac}}(x) - a_{n+1} Y_{n}^{\mathrm{ac}}(x) - (a_{n+1} - a_{n}) Z_{n}^{\mathrm{ac}}(x) + (a_{n+1} - a_{n}) Y_{n}^{\mathrm{ac}}(x) 
\notag \\
& \phantom{=} \, 
+ a_{n} Y_{n}^{\mathrm{ac}}(x) - a_{n} Z_{n+1}^{\mathrm{ac}}(x) 
\notag \\
& = 
(a_{n+1} - a_{n}) (Z_{n+1}^{\mathrm{ac}}(x)  - Z_{n}^{\mathrm{ac}}(x) ). 
\label{eq:diff_of_disc_Omega_NAG_expr}
\end{align}

From \eqref{eq:diff_of_disc_S_NAG_bound_2} and \eqref{eq:diff_of_disc_Omega_NAG_expr}, 
we have the conclusion:
\begin{align}
\mathcal{S}_{n}^{\mathrm{ac}}
& \leq 
a_{n+1} \, (\mathcal{F}(\hat{\nu}_{n+1}^{\mathrm{ac}}) - \mathcal{F}(\hat{\mu}_{n+1}^{\mathrm{ac}}) )
\notag \\
& \phantom{\leq} \, 
- (a_{n+1} - a_{n}) \int \langle \mathcal{G}_{\mathcal{F}}[\hat{\mu}_{n+1}^{\mathrm{ac}}](X_{n+1}(x)), Z_{n+1}^{\mathrm{ac}}(x)  - Z_{n}^{\mathrm{ac}}(x) \rangle
\, \mathrm{d}\mu_{0}(x)
\notag \\
& = 
a_{n+1} \, (\mathcal{F}(\hat{\nu}_{n+1}^{\mathrm{ac}}) - \mathcal{F}(\hat{\mu}_{n+1}^{\mathrm{ac}}) )
\notag \\
& \phantom{\leq} \, 
- (a_{n+1} - a_{n})^{2} \int \| \mathcal{G}_{\mathcal{F}}[\hat{\mu}_{n+1}^{\mathrm{ac}}](X_{n+1}^{\mathrm{ac}}(x)) \|^{2} 
\, \mathrm{d}\mu_{0}(x)
\notag \\
& = 
a_{n+1} \, (\mathcal{F}(\hat{\nu}_{n+1}^{\mathrm{ac}}) - \mathcal{F}(\hat{\mu}_{n+1}^{\mathrm{ac}}) )
\notag \\
& \phantom{\leq} \, 
+ (a_{n+1} - a_{n})^{2} \int \| \mathcal{G}_{\mathcal{F}}[\hat{\mu}_{n+1}^{\mathrm{ac}}](x) \|^{2} 
\, \mathrm{d}\hat{\mu}_{n+1}^{\mathrm{ac}}(x),
\notag
\end{align}
where the first equality is owing to~\eqref{eq:acc_disc_grad_flow_for_Z}. 
\end{proof}

\begin{proof}[Proof of Theorem~\ref{thm:c_rate_acc_disc_flow}]
By combining~\eqref{eq:diff_of_disc_Lyap_NAG_ver2} and~\eqref{eq:diff_of_disc_S_NAG_bound_3}, 
we have
\begin{align}
\mathcal{L}_{n+1}^{\mathrm{ac}} - \mathcal{L}_{n}^{\mathrm{ac}}
& \leq
- \frac{(a_{n+1} - a_{n})^{2}}{2} \, \int \| \mathcal{G}_{\mathcal{F}}[\hat{\mu}_{n+1}^{\mathrm{ac}}](x) \|^{2} \, \mathrm{d}\hat{\mu}_{n+1}^{\mathrm{ac}}(x)
\notag \\
& \phantom{=} \, 
+ a_{n+1} \, (\mathcal{F}(\hat{\nu}_{n+1}^{\mathrm{ac}}) - \mathcal{F}(\hat{\mu}_{n+1}^{\mathrm{ac}}) )
\notag \\
& \phantom{\leq} \, 
+ (a_{n+1} - a_{n})^{2} \int \| \mathcal{G}_{\mathcal{F}}[\hat{\mu}_{n+1}^{\mathrm{ac}}](x) \|^{2}
\, \mathrm{d}\hat{\mu}_{n+1}^{\mathrm{ac}}(x)
\notag \\
& \leq
\frac{(a_{n+1} - a_{n})^{2}}{2} \, \int \| \mathcal{G}_{\mathcal{F}}[\hat{\mu}_{n+1}^{\mathrm{ac}}](x) \|^{2} 
\, \mathrm{d}\hat{\mu}_{n+1}^{\mathrm{ac}}(x)
\notag \\
& \phantom{=} \, 
+ a_{n+1} \, (\mathcal{F}(\hat{\nu}_{n+1}^{\mathrm{ac}}) - \mathcal{F}(\hat{\mu}_{n+1}^{\mathrm{ac}}) ). 
\label{eq:diff_of_disc_Lyap_NAG_ver3}
\end{align}

By~\eqref{eq:def_mathcal_G}, 
Formula \eqref{eq:acc_disc_grad_flow_for_Y} defining $Y_{n}^{\mathrm{ac}}$ is rewritten as
\begin{align}
Y_{n+1}^{\mathrm{ac}}(x) = X_{n+1}^{\mathrm{ac}}(x) - \frac{1}{L} \, \mathcal{G}_{\mathcal{F}}[\hat{\mu}_{n+1}^{\mathrm{ac}}](X_{n+1}^{\mathrm{ac}}(x)). 
\end{align}
Then, it follows from 
the {\newsmoothness} of $\mathcal{F}$ in Definition \ref{dfn:pf_L_smooth}
that
\begin{align}
& \mathcal{F}(\hat{\nu}_{n+1}^{\mathrm{ac}}) - \mathcal{F}(\hat{\mu}_{n+1}^{\mathrm{ac}}) 
\notag \\
& \leq
- \int \left\langle \mathcal{G}_{\mathcal{F}}[\hat{\mu}_{n+1}^{\mathrm{ac}}](x), \frac{1}{L} \, \mathcal{G}_{\mathcal{F}}[\hat{\mu}_{n+1}^{\mathrm{ac}}](x) \right\rangle
\, \mathrm{d}\hat{\mu}_{n+1}^{\mathrm{ac}}(x)
+ \frac{L}{2} \int \left\| \frac{1}{L} \, \mathcal{G}_{\mathcal{F}}[\hat{\mu}_{n+1}^{\mathrm{ac}}](x) \right\|^{2}
\, \mathrm{d}\hat{\mu}_{n+1}^{\mathrm{ac}}(x)
\notag \\
& = 
- \frac{1}{2L} \int \left\| \mathcal{G}_{\mathcal{F}}[\hat{\mu}_{n+1}^{\mathrm{ac}}](x) \right\|^{2}\, \mathrm{d}\hat{\mu}_{n+1}^{\mathrm{ac}}(x). 
\label{eq:ineq_by_L_smoothness}
\end{align}
By combining~\eqref{eq:diff_of_disc_Lyap_NAG_ver3} and~\eqref{eq:ineq_by_L_smoothness}, 
we have
\begin{align}
\mathcal{L}_{n+1}^{\mathrm{ac}} - \mathcal{L}_{n}^{\mathrm{ac}}
& \leq
\left(
\frac{(a_{n+1} - a_{n})^{2}}{2} - \frac{a_{n+1}}{2L}
\right)
\int \left\| \mathcal{G}_{\mathcal{F}}[\hat{\mu}_{n+1}^{\mathrm{ac}}](x) \right\|^{2}\, \mathrm{d}\hat{\mu}_{n+1}^{\mathrm{ac}}(x). 
\label{eq:diff_of_disc_Lyap_NAG_ver4}
\end{align}

Then, by~\eqref{eq:def_A_n}, we have
\begin{align}
a_{n+1} - a_{n} 
=
\frac{(n+2)(n+3) - (n+1)(n+2)}{16L}
=
\frac{n+2}{4L}. 
\notag
\end{align}
Therefore 
\begin{align}
(a_{n+1} - a_{n})^{2} - \frac{a_{n+1}}{L}
=
\frac{(n^{2} + 4n + 4) - (n^{2} + 5n + 6)}{16L^{2}}  
= 
- \frac{n+2}{16L^{2}}
\label{eq:coeff_given_by_A_n}
\end{align}
holds. 
From~\eqref{eq:diff_of_disc_Lyap_NAG_ver4} and~\eqref{eq:coeff_given_by_A_n}, 
we have
\begin{align}
\mathcal{L}_{n+1}^{\mathrm{ac}} - \mathcal{L}_{n}^{\mathrm{ac}}
& \leq
- \frac{n+2}{32 L^{2}}
\int \left\| \mathcal{G}_{\mathcal{F}}[\hat{\mu}_{n+1}^{\mathrm{ac}}](x) \right\|^{2} \mathrm{d}\hat{\mu}_{n+1}^{\mathrm{ac}}(x) 
\leq 0.
\label{eq:diff_of_disc_Lyap_NAG_ver5}
\end{align}

Finally, 
by recalling the definition of $\mathcal{L}_{n}^{\mathrm{ac}}$ in~\eqref{eq:disc_grad_Lyap_Acc},
we have
\begin{align}
a_{n} (\mathcal{F}(\hat{\nu}_{n}^{\mathrm{ac}}) - \mathcal{F}(\mu_{\ast}))
\leq 
\mathcal{L}_{n}^{\mathrm{ac}}
\leq 
\mathcal{L}_{1}^{\mathrm{ac}}
=
a_{1} (\mathcal{F}(\hat{\nu}_{1}^{\mathrm{ac}}) - \mathcal{F}(\mu_{\ast}))
+
D(\hat{\xi}_{1}^{\mathrm{ac}}, \mu_{\ast}),
\notag
\end{align}
which implies the conclusion. 
\end{proof}


\end{document}